\newcommand{\reduceabovedisplayskip}{%
    \setlength{\abovedisplayskip}{2.0pt plus 1pt minus 1pt}%
}
\newcommand{\reducebelowdisplayskip}{%
    \setlength{\belowdisplayskip}{2.0pt plus 1pt minus 1pt}%
}
    \theoremstyle{nonumberplain}
    \newtheorem{proof}{Proof}
\newtheorem{theorem}{Theorem}[section]
\newtheorem{proposition}{Proposition}[section]
\newtheorem{lemma}{Lemma}[section]
\newtheorem{definition}{Definition}[section]
\newtheorem{corollary}{Corollary}[section]
\newtheorem{fact}{Fact}[section]
\newtheorem{claim}{Claim}[section]
\newtheorem{problem}{Problem}[section]
\newtheorem{conjecture}{Conjecture}[section]
{
    \theoremheaderfont{\bfseries}
    \theorembodyfont{\normalfont}
    
}
\newcommand{\RNum}[1]{\uppercase\expandafter{\romannumeral #1\relax}}
\begin{document}
\reduceabovedisplayskip
\reducebelowdisplayskip
\title{Semi-Degree Condition for Arbitrary $H$-Linked Oriented Graphs\thanks{The author's work is supported by National Natural Science Foundation of China {(No.12571373, 12071260)}}}

\author{Jia Zhou, Jin Yan\thanks{Corresponding author. E-mail adress: yanj@sdu.edu.cn}  \unskip\\[2mm]
School of Mathematics, Shandong University, Jinan 250100, China}

\date{}
\maketitle

\begin{abstract}
Let $ H $ be a multi-digraph on $ h $ vertices with $ q $ arcs. An \textbf{$H$-subdivision} in a digraph $D$ is a subdigraph obtained by replacing every arc $uv$ of $H$ with a path from $u$ to $v$ in $D$ such that these paths are pairwise internally vertex-disjoint. A digraph $ D $ is \textbf{arbitrary $ H $-linked} if, for every injection $ f: V(H) \to V(D) $, there exists an $ H $-subdivision in $ D $ such that each vertex $ v \in V(H) $ is mapped to $ f(v) \in V(D) $, and the length of every subdivision path can be arbitrarily specified as {an integer \(l \geq 4\)}. An oriented graph is a digraph without 2-cycles. Keevash, K\"{u}hn, and Osthus proved that every sufficiently large oriented graph $ D $ of order $ n $ with $\delta^0(D) \geq \frac{3n-4}{8}$ contains a Hamilton cycle (i.e., a $\overset{\leftrightarrow}{K_2}$-subdivision). Subsequently, Kelly, K\"{u}hn, and Osthus showed that such oriented graphs {are also arbitrary $ H $-linked, where $H$ is a loop}. Motivated by these results, we establish a minimum semi-degree condition for arbitrary  $ H $-linked oriented graphs: there exists $ n_0 = n_0(h,q) $ such that every oriented graph $ D $ of order $ n \geq n_0 $ with $\delta^0(D) \geq \frac{3n + 3h + 3q - 5}{8}$ is arbitrary $ H $-linked; specifically, if $H$ is a loop, this holds under the weaker condition $\delta^0(D) \geq \frac{3n - 4}{8}$. The result provides an oriented graph analogue of Wang's conjecture on cycle-factors in graphs [J. Korean Math. Soc. 51 (2014) 919--940] and determines the tight semi-degree bounds for both strongly Hamiltonian-connected and arbitrary $ q $-linked oriented graphs.
\end{abstract}

{\noindent\small{\bf Keywords: }Oriented graphs; Semi-degree; Arbitrary  $H$-linked

\vspace{1ex}
{\noindent\small{\bf AMS subject classifications.} 05C20, 05C38, 05C70}

\section{Introduction}
 The Hamilton cycle problem is one of the most well-known problems in graph theory. {A classical result is Dirac's theorem \cite{Dirac(1952)}, which establishes that every graph $G$ on $n\geq 3$ vertices with minimum degree at least $n/2$ contains a \textbf{Hamilton cycle} (i.e., a cycle {that} encounters each vertex exactly once). Extending this to digraphs, Ghouila-Houri \cite{Gho(1960)} demonstrated that every digraph $D$ on $n$ vertices with minimum semi-degree \(\delta^0(D) \geq n/2\) contains a Hamilton cycle. Here, \(\delta^0(D)\) denotes the \textbf{minimum semi-degree} of \( D \), defined as the minimum of the minimum out-degree \(\delta^+(D)\) and the minimum in-degree \(\delta^-(D)\). The situation becomes more complex for \textbf{oriented graphs} (i.e., digraphs without 2-cycles). A breakthrough result by Keevash, K\"{u}hn, and Osthus \cite{Keevash(2009)} established the tight bound $\delta^0(D) \geq (3n-4)/8$ for the existence of Hamilton cycles in sufficiently large oriented graphs.} Further, Kelly, K\"{u}hn, and Osthus \cite{Kelly(2009)} also proved that every sufficiently large oriented graph \( D \) with \(\delta^0(D) \geq (3n-4)/8\) is \textbf{almost vertex-pancyclic} (i.e., for every vertex \( v \in V(D) \), there exists a cycle of every possible length from 4 to \( n \) that includes \( v \)). {In this paper}, the \textbf{length} of a path or a cycle is defined as the number of arcs.

A generalization of the Hamilton cycle is the concept of subdivision. For a \textbf{multi-digraph} \( H \) (i.e., a digraph allowing loops and multiple arcs), an \textbf{\( H \)-subdivision} in a digraph \( D \) is a subdigraph obtained by replacing all arcs of $H$ with pairwise internally vertex-disjoint paths in $D$ {such that each replacement path is consistent in direction with its corresponding arc of $H$}. When such a subdivision spans all vertices of  \( D \), it is called \textbf{Hamiltonian}. Notably, if \( H \) is a 2-cycle, this directly corresponds to the existence of a Hamilton cycle in \( D \). We further consider a stronger concept called \( H \)-linked digraphs, which requires embedding an \( H \)-subdivision with specific vertex correspondences. A digraph \( D \) is \textbf{(Hamiltonian) \( H \)-linked} if for any injection \( f: V(H) \to V(D) \), there exists a (Hamiltonian) \( H \)-subdivision in \( D \) such that each vertex \( v \in V(H) \) is mapped to \( f(v) \in V(D) \). Furthermore, if the length of every subdivision path in the (Hamiltonian) \( H \)-subdivision can be arbitrarily specified as {an integer \(l \geq 4\), provided the total number of vertices in the $H$-subdivision does not exceed $n$}, then \( D \) is called \textbf{arbitrary (Hamiltonian) \( H \)-linked}. By definition, an arbitrary $H$-linked digraph is Hamiltonian $H$-linked, implying the former is a stronger property. Previous studies have explored \( H \)-linkage in graphs and digraphs; see \cite{Ferrara (2012), Ferrara (2006),Ferrara (2013), Gould(2006), kosto(2005), kostoc(2008), kostoc(20082)} for details.  Recently, Cheng, Wang, and Yan \cite{Wang(2024)} proved that for any digraph \( H \) with \( q \) arcs, there exists \( n_0 = n_0(q) \) such that every digraph \( D \) of order \( n \geq n_0 \) with \(\delta^0(D) \geq \frac{n}{2} + q\) is arbitrary Hamiltonian \( H \)-linked. Remarkably, when \( H \) is a single loop, arbitrary \( H \)-linked digraphs not only contain a Hamilton cycle but are also \textbf{almost vertex-pancyclic} by definition.

Our main theorem provides a semi-degree condition for arbitrary \( H \)-linked oriented graphs, extending previous result on almost vertex-pancyclicity in oriented graphs.

\begin{theorem}\label{main}
Let $ h, q $ be positive integers, and let $ H $ be a multi-digraph on $ h $ vertices with $ q $ arcs. There exists $ n_0 = n_0(h,q) $ such that for every oriented graph $ D $ of order $ n \geq n_0 $, if $ \delta^0(D) \geq \frac{3n + 3h + 3q -5}{8} $, then $ D $ is arbitrary  $ H $-linked. Specially, if $H$ is a loop, then every oriented graph $ D $ of order $ n \geq n_0 $ with $ \delta^0(D) \geq \frac{3n -4}{8} $ is arbitrary  $ H $-linked.
\end{theorem}

The semi-degree bound in Theorem \ref{main} is tight in a certain sense: Proposition \ref{prop2} constructs a family of oriented graphs achieving the threshold \(\delta^0(D) \geq \frac{3n + 3h + 3q - 13}{8}\) while failing to be arbitrary Hamiltonian \(H\)-linked for the multi-digraph $H$ {consisting of} \(q\) parallel arcs. The requirement for path lengths at least 4 in the definition of ``arbitrary  \(H\)-linked'' is necessary, as Proposition \ref{prop2} exhibits oriented graphs with $\delta^0(D) \geq \frac{13n}{32} - 3$ $>\frac{3n + 3h + 3q - 5}{8}$ (for sufficiently large $n$) failing to be arbitrary  \(H\)-linked if the length of any subdivision path is constrained to at most 3.

The proof of Theorem \ref{main} proceeds in three phases: First, all short subdivision paths are constructed through extremal structural analysis and the existence of short path with specified length between any two vertices (Lemma \ref{lemma3}), which is a key technical contribution. Second, an innovative application of the probabilistic method (Lemma \ref{lemma2}) randomly partitions the remaining oriented graph to achieve optimal vertex allocation for long subdivision paths. Finally, through a skillfully designed pinch vertex operation (Definition \ref{keydef}), the problem of embedding each long subdivision path is reduced to finding a Hamilton cycle. The proof is completed by leveraging foundational results on Hamilton cycles in oriented graphs by Keevash, K\"{u}hn, and Osthus \cite{Keevash(2009)} (Lemma \ref{4.3} and Lemma \ref{4.5}).

Interestingly, {when $H$ consists of $q$ vertex-disjoint arcs (i.e., $ h=2q $), the notion of Hamiltonian $H$-linked has been known under the name of \textbf{Hamiltonian $q$-linked}}, i.e., $D$ contains a collection of {vertex-disjoint} paths $P_1, P_2,\ldots,P_q$ such that $P_i$ is a path from $x_i$ to $y_i$ and $\bigcup_{i=1}^q V(P_i)=V(D)$, for every choice of vertices $x_1, \ldots,x_q, y_1, \ldots,y_q$ in $V(D)$. In 2008, K\"{u}hn, Osthus, and Young \cite{Kuhn(2008)} proved that every digraph $D$ on $n\geq n_0(q)$ vertices with $\delta^0(D) \geq \lceil n/2+q\rceil - 1$ is Hamiltonian $q$-linked. By substituting $h = 2q$ into Theorem \ref{main}, we derive Corollary \ref{cor1} as the oriented graph version. Indeed, the conclusion of Corollary \ref{cor1} is stronger: it is $q$-linked, and the lengths of every path can be prescribed arbitrarily as a number at least 4, i.e., \textbf{arbitrary $q$-linked}.

\begin{corollary}\label{cor1}
For any positive integer $q$, there is an integer $n_0 = n_0(q)$ such that every oriented graph $D$ on $n\geq n_0$ vertices with $\delta^0(D)\geq (3n + 9q-5)/8$ is arbitrary  $q$-linked.
\end{corollary}

{Proposition \ref{prop1} shows that Corollary \ref{cor1} is tight when $q=1$}. Specially, Hamiltonian 1-linked digraphs are \textbf{strongly Hamiltonian-connected}, that is, for any two vertices $x$ and $y$, there is a Hamiltonian path from $x$ to $y$. Berge's classical result \cite{Berge(1985)} states that every digraph \( D \) on \( n \) vertices with \(\delta^0(D) \geq \frac{n + 1}{2}\) is strongly Hamiltonian-connected. Corollary \ref{cor1} implies that every oriented graph $D$ on $n\geq n_0$ vertices with {$\delta^0(D)\geq \frac{3n + 4}{8}$} is strongly Hamiltonian-connected, thereby establishing an oriented graph analogue of this fundamental property.


On the other hand, given $q$ disjoint arcs $f_1, \ldots, f_q$ of $D$, Corollary \ref{cor1} provides a positive integer $n_0 = n_0(q)$ such that every oriented graph $D$ of order $n\geq n_0$ with $\delta^0(D)\geq (3n + 9q-5)/8$ contains $q$ disjoint cycles $C_1,C_2,\ldots, C_q$ with lengths $n_1,\ldots,n_q$ respectively, such that $f_i$ is an arc of $C_i$ for all $i\in [q]$, where $n_1, \ldots, n_q$ are $q$ integers satisfying all $n_i\geq 5$ and $\sum_{i=1}^{q} n_i\leq  n$. This result establishes an oriented graph version of a conjecture of Wang \cite{wang(2014)} in graphs.

\begin{conjecture}
\cite{wang(2014)} Let $q$ be an integer with $q\geq 2$ and $G$ be a graph of order $n = \sum^q_{i=1}n_i$, where $n_i\geq 5$ for $i \in [q]$. Let $\{f_1,\ldots,f_q\}$ be $q$ vertex-disjoint arcs in $G$. If the minimum degree of $G$ is at least $(n+2q)/2$, then $G$ can be partitioned into $q$ cycles $C_1,\ldots,C_q$ such that $f_i\in E(C_i)$ and $|C_i|=n_i$ for $i \in [q]$.
\end{conjecture}

A digraph is \textbf{regular} if all vertices has same in-degree and out-degree. Recently, Lo, Patel, and Yıldız \cite{Lo(2024)} proved that every $d$-regular sufficiently large oriented graph on $n$ vertices with $n\leq 4d+1$ is Hamiltonian. This motivates the following open problem.
\begin{problem}
Let $ h, q $ be positive integers, and let $ H $ be a multi-digraph on $ h $ vertices with $ q $ arcs. {Is it true that there exist constants} \(c_{1}\), \(c_{2}\), and \(c_{3}\) such  {that every} $d$-regular oriented graph $ D $ of order $ n $ is arbitrary  $ H $-linked, where $d=\frac{n+c_1h+c_2q+c_3}{4}$?
\end{problem}

The rest of the paper is organized as follows. {Section 2 shows that the minimum semi-degree bounds in Theorem \(\ref{main}\) and Corollary \(\ref{cor1}\) are tight}. Section 3 introduces additional tools for the main theorem. The proof of Theorem \ref{main} is presented in Section 4, which is divided into extremal case and non-extremal case. For the extremal case, we use structural analysis and random partitioning, as shown in Subsection 4.1. For the non-extremal case, we use the directed version of Szemer\'{e}di regular lemma and random partition to complete the proof, as shown in Subsection 4.2. For completeness, detailed proofs of some referenced lemmas and associated calculations are provided in the Appendix\footnote{See the file named "Appendix"}.

\section{Notation and Counterexamples}
\subsection{Notation}
The \textbf{order} of a digraph \(D\) (denoted as \(\boldsymbol{|D|}\)) is the number of vertices, and \(\boldsymbol{a(D)}\) denotes the number of arcs. Given two vertices \(x\) and \(y\) in \(V(D)\), we write \(\boldsymbol{xy}\) for the arc directed from \(x\) to \(y\). Furthermore, we refer to the arc \(xy\) as an \textbf{in-arc} of \(y\) and an \textbf{out-arc} of \(x\). For a vertex \(x\) of \(D\), we write \(\boldsymbol{N^+_D(x)} = \{y: xy \in A(D)\}\) for the \textbf{out-neighbourhood} and \(\boldsymbol{N^-_D(x)} = \{y: yx \in A(D)\}\) for the \textbf{in-neighbourhood} of $x$. We also write \(d^+_D(x) = |N^+_D(x)|\) for the \textbf{out-degree} and \(d^-_D(x) = |N^-_D(x)|\) for the \textbf{in-degree} of $x$. Given a set \(X \subseteq V(D)\), we denote \(\boldsymbol{N^+_D(X)}\) as the union of the out-neighbourhood of every vertex \(x \in X\). The subdigraph of \(D\) \textbf{induced} by \(X\) is denoted by \(\boldsymbol{D[X]}\), and \(\boldsymbol{D - X}\) denotes the digraph obtained from \(D\) by deleting \(X\) and all arcs incident with \(X\). {Let $X$ and $Y$ be two subsets of $V(D)$. Define $\boldsymbol{X \setminus Y}$ as the set obtained by removing the vertices of $Y$ from $X$. Denote $A(X,Y)$ as the set of arcs from $X$ to $Y$, and let $a(X,Y)$ denote the number of such arcs.}

A \textbf{matching} in a digraph is a set of disjoint arcs with no common endvertices. {A matching \(M\) is \textbf{maximal} if there is no arc in \(D - V(M)\) that can be added to $M$, while \textbf{keeping} it a \textbf{matching}}. Let $P = x_1 \cdots x_t$ be a directed path in $D$, denoted as an $(x_1, x_t)$-path. Here, $x_1$ is the \textbf{initial} vertex and $x_t$ is the \textbf{terminal} vertex of $P$; individually, either $x_1$ or $x_t$ may also be referred to as an \textbf{endvertex} of $P$. We denote \(zx_1P\) as the path starting at \(z\), passing through \(x_1\), and following \(P\) to the terminal vertex of $P$.  An \textbf{\(l\)-path} is a path of length \(l\). Two paths are \textbf{internally vertex-disjoint} if they do not share any interior vertices, and we abbreviate ``vertex-disjoint" as "disjoint" for brevity.

Throughout this paper, the notation \(\boldsymbol{0 < \beta \ll \alpha}\) is used to indicate that \(\beta\) can be chosen to be sufficiently small relative to \(\alpha\) so that all calculations required in our proof are valid. For positive integers \(i,t\) such that $i\leq t$, let \(\boldsymbol{[t]} = \{1, \ldots, t\}\) and \(\boldsymbol{[i, t]} = \{i, \ldots, t\}\). For convenience, let \(\boldsymbol{a \pm b}\) denote an unspecified real number in the interval \([a - b, a + b]\).

\subsection{Counterexamples}
In this subsection, we present counterexamples to illustrate  the tightness of the bounds in Theorem \ref{main} and Corollary \ref{cor1}. Key concepts include: A  \textbf{tournament} is a digraph such that there is exactly one arc between any two vertices. Define \(\boldsymbol{A \rightarrow B}\) if and only if for every \(a \in A\) and \(b \in B\), there is an arc \(a b\). Given a set $U\subseteq V(D)$, define \( \boldsymbol{d^{\pm}_D(U)}=\min \{d^+_D(x),d^-_D(x)\mid x\in U\} \).

\begin{proposition}\label{prop1}
Let $h$ be an even number, and let $n,q$ be positive integers with $n\geq 3h+7q$. Suppose $H$ is a multi-digraph on $h$ vertices with $q$ arcs that are all directed from $u$ to $v$, where $u,v\in V(H)$. There exists an oriented graph $D$ on $n$ vertices with $\delta^0(D)\geq (3n+3h+3q-13)/8$ such that $D$ is not arbitrary Hamiltonian $H$-linked.
\end{proposition}

\begin{proof}
Let $c\geq 1$ be an integer. Set $n=8c+3h+7q-9$. Define an oriented graph $D$ on $n$ vertices as follows (see Fig. \ref{tu1}).
\begin{itemize}[itemsep=0pt, topsep=0.5pt,parsep=1pt]
\item[(i)] $V(D)$ has a partition $(D_1,D_2,D_3,D_4 ,B)$, where $|D_1|=|D_3|=(n+h+q-11)/4=2c+h+2q-5$ (odd), $|D_2|=(n-3h+q+9)/4=2c+2q$ (even), $|D_4|=(n-3h-3q+13)/4$, and $|B|=h$. Furthermore, $D_2$ admits a partition $(D^1_2$, $D^2_2)$ such that $|D^1_2|=|D^2_2|$ (this can be done since \(|D_2|\) is even). Partition $B$ into $(B_1$, $B_2, \ u',\ v')$ such that $|B_1|=|B_2|=h/2-1$.
\item[(ii)] Each of $D_1$, $D^1_2$, $D^2_2$, and $D_3$ induces a tournament as regular as possible. Additionally, \(D_4\) and \(B\) form independent sets.
\item[(iii)] Let $D_1\rightarrow D_2\rightarrow D_3 \rightarrow D_4\rightarrow D_1$, $D^1_2\rightarrow D^2_2$, $D^2_2\rightarrow D_4\rightarrow D^1_2$ and $D_3\rightarrow D_1$, and $D_3\cup D_4\rightarrow u'\rightarrow D_1\cup D_2$, $D_2\cup D_3\rightarrow v' \rightarrow D_1\cup D_4$.  Also, let $D_1\cup D_4\rightarrow B_1\rightarrow D_2\cup D_3$ and let $D_1\cup D_2\rightarrow B_2\rightarrow D_3\cup D_4$.
\end{itemize}

From the construction of $D$, we have that
\begin{itemize}[itemsep=0pt, topsep=3pt,parsep=2pt]
{\item $ d^{\pm}_D(D_1)\geq \min \{\frac{|D_1|-1}{2}+|D_2\cup B_1\cup B_2|,\frac{|D_1|-1}{2}+|D_3\cup D_4\cup \{u',v'\}|\}=\frac{3n + 3h + 3q -13}{8},$
\item $ d^{\pm}_D(D^1_2)\geq \min \{\frac{|D^1_2|-2}{2}+|D^2_2\cup D_3\cup \{v'\}\cup B_2|,\frac{|D^1_2|-2}{2}+|D_4\cup D_1\cup \{u'\}\cup B_1|\}=\frac{7n+3h+7q-33}{16},$
\item $d^{\pm}_D(D^2_2)\geq \min \{\frac{|D^2_2|-2}{2}+|D_4\cup D_3\cup \{v'\}\cup B_2|,\frac{|D^2_2|-2}{2}+|D^1_2\cup D_1\cup \{u'\}\cup B_1|\}=\frac{7n+3h+7q-33}{16},$
\item $d^{\pm}_D(D_3)\geq \min \{\frac{|D_3|-1}{2}+|D_4\cup D_1\cup \{u',v'\}|,\frac{|D_3|-1}{2}+|D_2\cup B_1\cup B_2|\}=\frac{3n + 3h + 3q -13}{8},$
\item $d^{\pm}_D(D_4)\geq \min \{|D_1\cup D^1_2\cup \{u'\}\cup B_1|,|D^2_2\cup D_3\cup \{v'\}\cup B_2|\}= \frac{3n + 3h + 3q -13}{8},$
\item $ d^{\pm}_D(B_1)\geq  \min \{|D_2\cup D_3|,|D_1\cup D_4|\}= (n -h -q +1)/2,$
\item $d^{\pm}_D(B_2)\geq  \min \{|D_3\cup D_4|,|D_1\cup D_2|\}= (n -h -q +1)/2,$
\item $d^{\pm}_D(u')\geq  \min \{|D_1\cup D_2|,|D_3\cup D_4|\}= (n -h -q +1)/2,$
\item $ d^{\pm}_D(v')\geq  \min \{|D_1\cup D_4|,|D_2\cup D_3|\}= (n -h -q +1)/2.$}
\end{itemize}
 We thus can immediately infer that $\delta^0(D)=\frac{3n + 3h + 3q -13}{8}$, satisfying the required condition.

We define a bijection \( f: V(H) \to B \) such that \( f(u) = u' \) and \( f(v) = v' \). By the construction of the oriented graph \( D \), for any path from \( u' \) to \( v' \) that is internally disjoint with \( B \), the number of vertices used from \( D_2 \) is at least one more than the number of vertices used from \( D_4 \). Since \( |D_2| = |D_4| + q - 1 \), there are no $q$ internally disjoint $(u',v')$-path. Therefore, \( D \) {is} not arbitrary Hamiltonian \( H \)-linked.
\end{proof}
\begin{figure}[H]
\centering
   \includegraphics[scale=0.6]{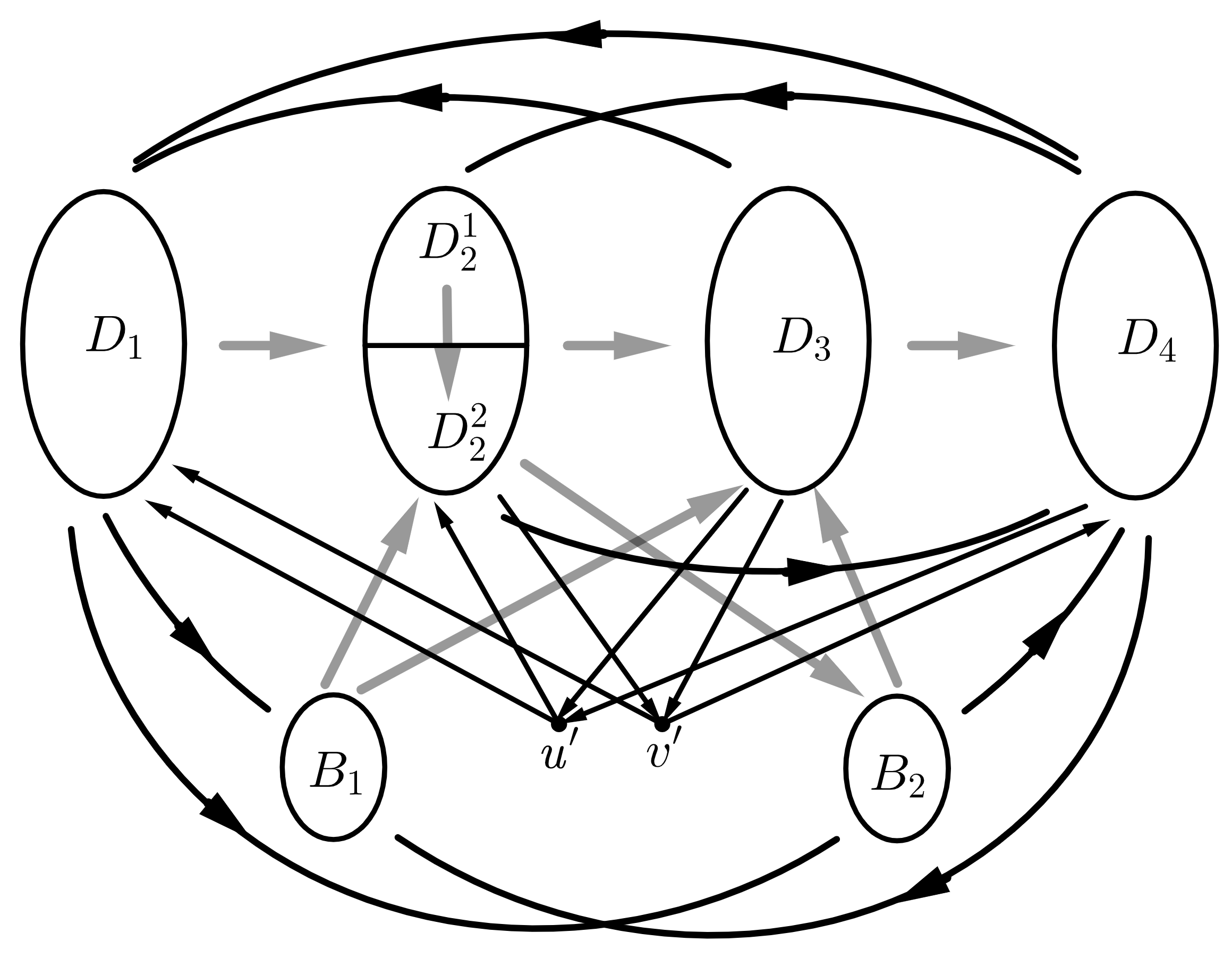}
\caption{An oriented graph $D$ on $n$ vertices with $\delta^0(D)\geq (3n+3h+3q-13)/8$ that is not arbitrary Hamiltonian $H$-linked.}
\label{tu1}
\end{figure}

Proposition \ref{prop2} explains the necessity of restricting subdivision path lengths to at least 4.

\begin{proposition}\label{prop2}
There exists a family of oriented graphs \(D\) on \(n\) vertices with \(\delta^0(D) \geq {13n/32 - 3}\) that do not contain a 3-path form $u$ to $v$ for some two vertices \(u,v \in V(D)\).
\end{proposition}
\begin{proof}
To illustrate this, we define an oriented graph \(D\) on \(n\) vertices as follows (see Fig. \ref{tu2}). Let \(D_1\), \(D_2\), \(D_3\), and \(D_4\) be disjoint vertex sets with \(|D_1| = 5n/16\), \(|D_2| = n/8\), \(|D_3| = 5n/16\), and \(|D_4| = n/4 - 2\). Add arcs such that \(D_1 \rightarrow D_2 \rightarrow D_3 \rightarrow D_4 \rightarrow D_1\) and \(D_1 \rightarrow D_3\). Each of \(D_1\), \(D_3\), and \(D_4\) {induces a tournament as regular as possible}, and \(D_2\) induces an independent set. Additionally, let \(D[D_2, D_4]\) be a bipartite tournament such that each vertex in \(D_2\) has at least \(n/8 - 1\) out-neighbours and in-neighbours in \(D_4\). Add two vertices \(u\) and \(v\) such that \(N_D^{+}(u) = D_2 \cup D_3\), \(N_D^{-}(u) = D_1 \cup D_4\), \(N_D^{+}(v) = D_3 \cup D_4\), and \(N_D^{-}(v) = D_1 \cup D_2\). Namely, $d^+_D(u), d^-_D(u)\geq 14n/32$ and $d^+_D(v), d^-_D(v)\geq 14n/32$.

Now, we verify that {$\delta^0(D)\geq 13n/32 - 3$} from the following inequalities:
\begin{itemize}[itemsep=1pt, topsep=2.5pt,parsep=2.5pt]
\item $d^{\pm}_D(D_1)\geq \min \{{\frac{|D_1|-2}{2}+|D_2\cup D_3\cup \{u,v\}|}, \ {\frac{|D_1|-2}{2}}+|D_4|\}={\frac{13n }{32}-3},$
\item $d^{\pm}_D(D_2)\geq \min  \{ {|D_3\cup \{v\}|}+n/8-1, {|D_1\cup \{u\}|}+n/8-1\}=\frac{7n}{16},$
\item $d^{\pm}_D(D_3)\geq \min  \{{\frac{|D_3|-2}{2}}+|D_4|, \ {\frac{|D_3|-2}{2}+|D_1\cup D_2\cup \{u,v\}|}\}={\frac{13n}{32}-3},$
\item $d^{\pm}_D(D_4)\geq \min  {\{\frac{|D_4|-2}{2}+|D_1\cup \{u\}|, \ \frac{|D_4|-2}{2}+|D_3\cup \{v\}|\}=\frac{7n}{16}-1}.$
\end{itemize}
By construction, it is easy to check that \(D\) contains no \((u,v)\)-path of length 3.
\end{proof}
\begin{figure}[H]
\centering
   \includegraphics[scale=0.75]{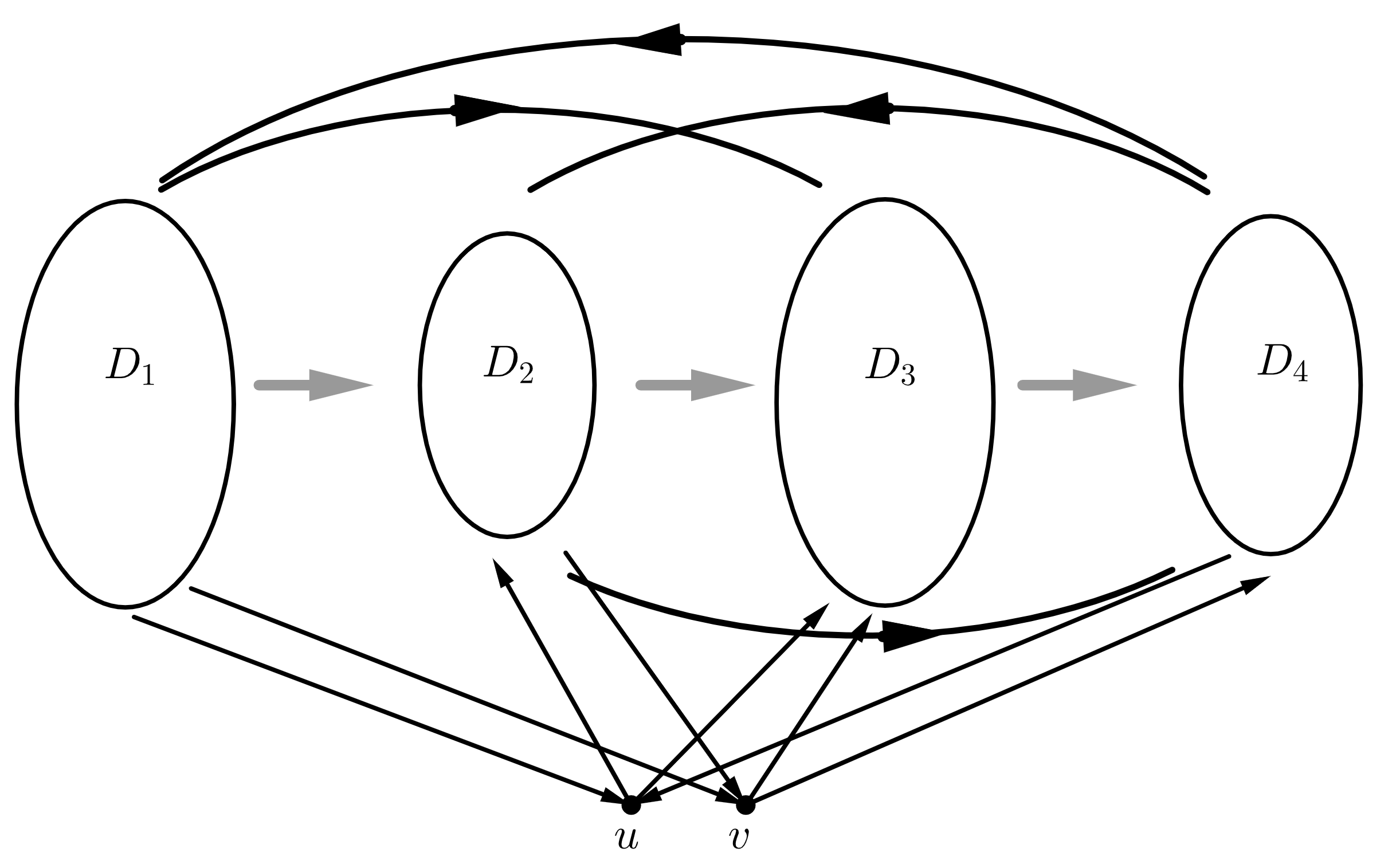}   
\caption{An oriented graph $D$ on $n$ vertices with {$\delta^0(D)\geq 13n/32 - 3$} that contains no $(u,v)$-path with length 3.}
\label{tu2}
\end{figure}

\section{Preliminaries}

\subsection{Short paths with prescribed length}
In this subsection, we prove that for $4 \leq l \leq n/10^{10}$, there is a path with length $l$ between any two vertices {in an oriented graph \(D\) of order \(n\) with \(\delta^0 (D) \geq n/3 + 6\)}. This guarantees the existence of short subdivision paths in the non-extremal case of the proof of Theorem \ref{main}.

Lemma \ref{le3} shows the existence of a path of length 4 with specified endvertices {in \(D\) under the weaker minimum semi-degree condition}.

\begin{lemma}\label{le3}
Let \(D\) be an oriented graph on \(n\) vertices with \(\delta^0 (D) \geq n/3 + 2\). Then \(D\) contains an \((x,y)\)-path of length 4 for any two vertices \(x, y \in V(D)\).
\end{lemma}

\begin{proof}
We arbitrarily select vertices $x$ and $y$ from $V(D)$ and show that there is an $(x,y)$-path of length 4. Let $X$ be a set consisting of $\frac{n}{3} + 1$ out-neighbours of $x$ in $D\setminus \{y\}$, and let $Y$ be a set consisting of $\frac{n}{3} + 1$ in-neighbours of $y$ in $D\setminus \{x\}$. {By contradiction}, suppose that \reducebelowdisplayskip \reduceabovedisplayskip
\begin{equation}\label{s4}
\text{there is no $(x, y)$-path of length 4.}
\end{equation}
This means that
\begin{equation}\label{s2}
\text{there are no vertices } u\in X,v\in Y\text{ such that } |N^+_D(u) \cap N^-_D(v)|\geq 3.
\end{equation}
Since otherwise, {for any vertex $w\in (N^+_D(u) \cap N^-_D(v))\setminus \{x,y\}$, $xuwvy$ is an $(x,y)$-path of length 4}. {Recall that \(|X|=|Y|=\frac{n}{3}+1\) and \(\delta^0(D)\ge\frac{n}{3}+2\). Consider any vertex \(u\in X\), if \(D[X]\) is arcless, then all neighbours of $u$ must in \(V(D)\setminus X\), but \(|V(D)\setminus X|=\frac{2n}{3}-1<2(\frac{n}{3}+2)\leq d^+_D(u)+d^-_D(u)\), a contradiction. Thus \(D[X]\) has an arc, the same holds for \(D[Y]\). Namely,}
\begin{equation}\label{s3}
\text{both $D[X]$ and $D[Y]$ have an arc.}
\end{equation}
Let $Z := X \cap Y$. Next, our proof is divided into three cases according to the {size} of $Z$.

\textbf{Case 1. $\boldsymbol{|Z|= n/3+1}$, that is, $\boldsymbol{X=Y= Z}$.}

By (\ref{s3}), $D[Z]$ contains an arc, say $uv$. Consider the out-neighbours of $v$ and the in-neighbours of $u$. According to the assumption (\ref{s4}), we have $N^+_D(v) \cap Z = \emptyset$ and $N^-_D(u) \cap Z = \emptyset$. Hence $|N^+_D(v)\cup N^-_D(u)|\leq n-|Z|=2n/3-1$. By a simple calculation,
$$
|N^-_D(u) \cap N^+_D(v)| = |N^-_D(u)| + |N^+_D(v)| - |N^-_D(u) \cup N^+_D(v)| \geq 2\delta^0(D) - 2n/3 + 1 \geq 5,
$$
where $u\in X\cap Z, v\in Y\cap Z$. This contradicts (\ref{s2}).

\textbf{Case 2. $\boldsymbol{3 \leq |Z| \leq \frac{n}{3}}$.}

In this case, we first assert that either $D[X \setminus Z]$ or $D[Y \setminus Z]$ has no arc. {By contradiction}, assume that both $D[X \setminus Z]$ and $D[Y \setminus Z]$ have arcs. We prove the existence of a vertex \( x' \in X \setminus Z \) satisfying \( d^+_{X \setminus Z}(x') \leq \frac{|X \setminus Z|}{2} \) and {\( d^-_{X \setminus Z}(x')\geq 1 \)}. Suppose, for contradiction, that no such vertex exists. Then, all vertices with out-degree \( \leq \frac{|X \setminus Z|}{2} \) induce an arcless subdigraph. Let \( x' \in X \setminus Z \) be {a} vertex with the minimum out-degree; thus, \( d^+_{X \setminus Z}(x') \leq \frac{|X \setminus Z|}{2} \) and {\( d^-_{X \setminus Z}(x')=0 \)}. Consequently, there exists another vertex \( x'' \in X \setminus (Z \cup \{x'\}) \) satisfying \( d^+_{X \setminus Z}(x'') \leq \frac{|X \setminus Z|}{2} \) and {\( d^-_{X \setminus Z}(x'')=0 \)}. Repeating this process, {we can extract a maximal subset \(S\subseteq X\setminus Z\) where every vertex $s\in S$ has \(d^+_{X\setminus Z}(s)\leq \frac{|X\setminus Z|}{2}\) and \(d^-_{X\setminus Z}(s)=0\), so $S$ is arcless. Since \(D[X\setminus Z]\) contains at least one arc, we obtain that $(X\setminus Z)\setminus S\neq \emptyset$. Hence, there is a vertex $x'''\in (X\setminus Z)\setminus S$ with $d^+_{X\setminus Z}(x''')\leq \frac{|(X\setminus Z)\setminus S|}{2}$, and so \( d^-_{X \setminus Z}(x''')=0 \), which contradicts the maximality of $S$. Therefore, there must exist a vertex \( w \in X \setminus Z \) with \( d^+_{X \setminus Z}(w) \leq \frac{|X \setminus Z|}{2} \) and an in-arc \( uw \) of $w$ in \( D[X \setminus Z] \).} Similarly, since there are arcs within $D[Y \setminus Z]$, there exists a vertex $u' \in Y \setminus Z$ whose in-degree is less than $\frac{|Y \setminus Z|}{2}$ and which has out-arcs within $D[Y \setminus Z]$, say $u'w'$.

The fact (\ref{s4}) indicates that $N^+_D(w) \cap Y = \emptyset$ and $N^-_D(u') \cap X = \emptyset$. Recall that $d^+_{X \setminus Z}(w) \leq \frac{|X \setminus Z|}{2}$, $d^-_{Y \setminus Z}(u') \leq \frac{|Y \setminus Z|}{2}$, and $|X|=|Y|$. In addition,
$$
\begin{aligned}
n &\geq |(X\cup Y)\setminus (N^+_D(w)\cup N^-_D(u'))| + |N^+_D(w)\cup N^-_D(u')| \\
&\geq |X| + |Y \setminus Z| - \frac{|X \setminus Z| + |Y \setminus Z|}{2} + |N^+_D(w)\cup N^-_D(u')| \\
&\geq |X| + |N^+_D(w)\cup N^-_D(u')| \\
&\geq |X| + |N^+_D(w)| + |N^-_D(u')| - |N^+_D(w)\cap N^-_D(u')| \\
&\geq 3(n/3 + 1) - |N^+_D(w)\cap N^-_D(u')|.
\end{aligned}
$$
This implies that $|N^+_D(w) \cap N^-_D(u')| \geq 3$, which contradicts (\ref{s2}). Hence, either $D[X \setminus Z]$ or $D[Y \setminus Z]$ contains no arc. W.l.o.g., assume that there is no arc in $D[X \setminus Z]$.

We also claim that there is no arc from $Z$ to $X \setminus Z$. Suppose that there is such an arc, say $uv$. According to the fact that $D[X \setminus Z]$ is arcless and the assumption (\ref{s4}), we can conclude that $N^+_D(v) \cap (X \cup Y) = \emptyset$. Moreover, (\ref{s3}) yields an arc in $D[Y]$, say $wz$. Then, by (\ref{s4}), we have $N^-_D(w) \cap X = \emptyset$. Consequently, we can calculate that
$$
\begin{aligned}
n &\geq |(X\cup Y)\setminus (N^+_D(v)\cup N^-_D(w))| + |N^+_D(v)\cup N^-_D(w)| \\
&={|X| + |(Y\setminus Z)\setminus ( N^+_D(v)\cup N^-_D(w))| + |N^+_D(v)\cup N^-_D(w)|}\\
&\geq |X| + |Y \setminus Z| - |Y \setminus Z| + |N^+_D(v)\cup N^-_D(w)| \\
&\geq |X| + |N^+_D(v)\cup N^-_D(w)| \\
&\geq |X| + |N^+_D(v)| + |N^-_D(w)| - |N^+_D(v)\cap N^-_D(w)| \\
&\geq 3(n/3 + 1) - |N^+_D(v)\cap N^-_D(w)|.
\end{aligned}
$$
This implies that $|N^-_D(w) \cap N^+_D(v)| \geq 3$, which contradicts (\ref{s2}).

{Similarly, $D[Z]$ is also arcless, since otherwise, there is an arc $uv$ in $D[Z]$. According to the fact that there is no arc from $Z$ to $X \setminus Z$ and the assumption (\ref{s4}), we can conclude that $N^+_D(v) \cap (X \cup Y) = \emptyset$ and $N^-_D(u) \cap X = \emptyset$. Similarly, we can obtain that $|N^-_D(u) \cap N^+_D(v)| \geq 3$, which contradicts (\ref{s2}).}  Now, both $D[X \setminus Z]$ and $D[Z]$ are arcless, and there is no arc {from $Z$ to $X \setminus Z$}. Therefore, there must be an arc $uv$ from $X \setminus Z$ to $Z$ by (\ref{s3}). Together with (\ref{s4}), $N^+_D(v)\cap (X\cup Y) = \emptyset$. Also, since $D[Y]$ has an arc, say $wz$, we have $N^-_D(w) \cap X = \emptyset$. Likewise, we can conclude that $|N^+_D(v) \cap N^-_D(w)| \geq 3$, which leads to a contradiction.

\textbf{Case 3. $\boldsymbol{Z}$ has at most two vertices.}

In this case, $|V(D)\setminus (X\cup Y)| = n - |X| - |Y| + |Z| \leq n/3$. Suppose that both of the following conditions hold:
\begin{itemize}[itemsep=0pt, topsep=0.5pt, parsep=1pt]
  \item[(i)] There exists $x' \in X$ such that $\left|N^+_D\left(x'\right) \setminus (X \cup Y)\right| \geq \frac{n/3 + 1}{2}$.
  \item[(ii)] There exists $y' \in Y$ such that $\left|N^-_D\left(y'\right) \setminus (X \cup Y)\right| \geq \frac{n/3 + 1}{2}$.
\end{itemize}
If these conditions are met, then $|N^{+}_D\left(x'\right) \cap N^-_D\left(y'\right)| \geq 3$, which contradicts (\ref{s2}). So w.l.o.g., assume that (i) does not hold. (The case when (ii) does not hold is similar.) Let $X'$ be the set of vertices $x' \in X$ with $d_X^-\left(x'\right) > 0$. Note that $X' \neq \emptyset$ since $D$ is an oriented graph. Let $x' \in X'$ be a vertex such that $d_{X'}^+\left(x'\right)$ is minimal. Since $N^{+}_D\left(x'\right) \cap (X \setminus X') = \emptyset$, the minimality implies that
$$
\left|N^{+}_D\left(x'\right) \backslash X'\right| \geq \delta^0(D) - \frac{|X'|}{2} \geq \delta^0(D) - \frac{|X|}{2} \geq \frac{n/3 + 1}{2},
$$
a contradiction. This completes the proof.
\end{proof}
\begin{figure}[H]
\centering
   \includegraphics[scale=0.75]{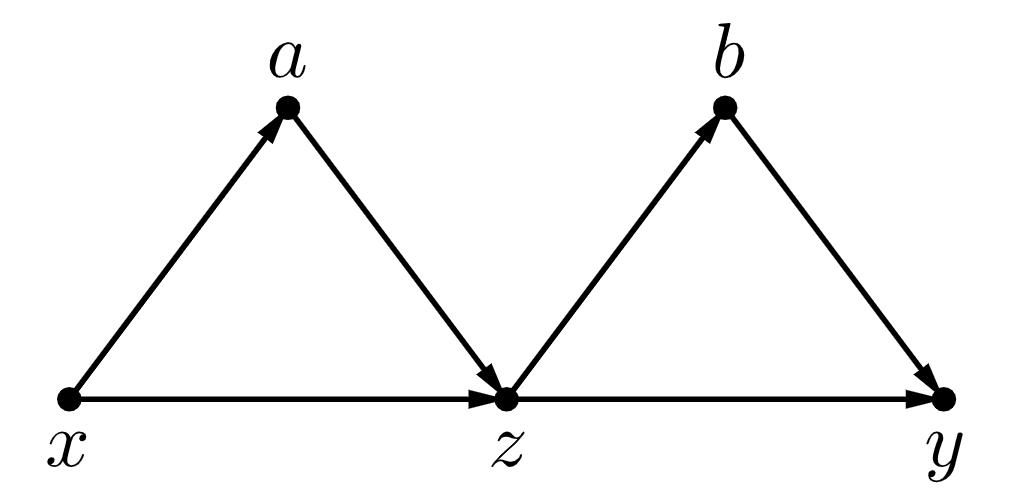}
\caption{An \(xy\)-butterfly.}
\label{tu4}
\end{figure}

To find a path with specified endvertices of prescribed length $l \geq 7$, we introduce the following special structure, {which has already been utilized in \cite{Kelly(2009)}}. An \textbf{\(xy\)-butterfly} is an oriented graph with vertices \(x, y, z, a, b\) and arcs \(xa, xz, az, zb, zy, by\), see Fig. \ref{tu4}. It is easy to check that there are three $(x,y)$-paths of length 2,3,4, respectively in an \(xy\)-butterfly. This structure plays a crucial role in adjusting the length of a path.

\begin{fact} \label{fac2}
\cite{Kelly(2009)} If \(D\) is an oriented graph on \(n\) vertices with \(\delta^0(D) \geq n/3 + 1\), then for any vertex \(x \in V(D)\), there exists a vertex \(y\) such that \(D\) contains an \(xy\)-butterfly.
\end{fact}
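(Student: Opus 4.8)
The plan is to produce the butterfly greedily, choosing the vertices $z$, $a$, $b$, $y$ in that order and at each step using the minimum semi-degree $\delta^0(D) \ge n/3 + 1$ to guarantee that the intersection of the relevant neighbourhoods is nonempty (indeed large). Recall an $xy$-butterfly has arc set $\{xa, xz, az, zb, zy, by\}$; so given the fixed vertex $x$, I need to find $z \in N^+(x)$ with $z \ne x$, then $a \in N^+(x) \cap N^-(z)$, then a common "sink" $y$ reachable from both $z$ and $b$ where $b \in N^+(z)$, with all five vertices $x,y,z,a,b$ distinct. The key arithmetic fact is that for any two vertices $u,v$ we have $|N^+(u) \cap N^-(v)| \ge d^+(u) + d^-(v) - n \ge 2(n/3+1) - n = n/3 + 2 - n + n = \tfrac{2n}{3} + 2 - n$; more carefully, $|N^+(u)\cap N^-(v)| \ge d^+(u)+d^-(v)-|V(D)| \ge 2(n/3+1)-n = -n/3+2$, which is not yet positive, so a single such intersection is not enough — I must instead intersect \emph{three} out/in-neighbourhoods at the crucial step.

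First I would pick $z$: since $d^+(x) \ge n/3+1 \ge 1$, choose any $z \in N^+_D(x)$, so the arc $xz$ exists; note $z \ne x$ since $D$ is an oriented graph (no loops). Next I would pick $a$: I need $a \in N^+_D(x) \cap N^-_D(z)$ with $a \notin \{x,z\}$. By inclusion–exclusion $|N^+_D(x) \cap N^-_D(z)| \ge d^+(x) + d^-(z) - n \ge 2(n/3+1) - n = n/3 - n + 2$. This is still potentially negative, so this naive bound is too weak; the right move is to choose $a$ and $b$ and $y$ together. The cleaner route: fix $z$ as above. Then the set $W := N^+_D(z)$ has size $\ge n/3+1$, and for the final vertex I want $y \in N^+_D(z) \cap \big(\bigcup \text{something}\big)$. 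Let me reorganise: I claim there exist distinct $b \in N^+_D(z)$ and $y \in N^+_D(z) \cap N^-_D(b)^{c}$... — this is getting tangled, so the honest plan is the following two-intersection argument. Pick $z\in N^+(x)$. Consider the bipartite-type count: the number of vertices $y$ with $y\in N^+(z)$ is $\ge n/3+1$; among these, the number that additionally lie in $N^+(b)$ for \emph{some} $b\in N^+(z)$ is large because $N^+(z)$ itself has $\ge n/3+1$ vertices each with out-degree $\ge n/3+1$, and an averaging/counting argument over the pairs $(b,y)$ with $b\in N^+(z)$, $y\in N^+(b)\cap N^+(z)$ shows such a pair exists with $b\ne y$; similarly pick $a\in N^+(x)\cap N^-(z)$ using that $d^+(x)+d^-(z) - (n - |N^+(z)|)$ can be made positive once we delete the already-used vertices and observe $|N^+(x)\cap N^-(z)| \ge d^+(x)+d^-(z)-n+ (\text{overlap correction})$. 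The main obstacle is precisely that the pairwise intersection bound $2(n/3+1)-n$ is (just) nonpositive, so one cannot greedily grab each vertex from a two-fold intersection; one must exploit that several of the required neighbourhoods are "out-neighbourhoods of the same vertex $z$" so that the relevant sets live inside the common large set $N^+(z)$ (or $N^-(z)$), which shrinks the ambient universe from $n$ to roughly $n/3$ and makes the intersections genuinely nonempty.

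Concretely, the cleanest correct argument I would write: choose $z\in N^+(x)$ (possible since $d^+(x)\ge1$). Set $X^+:=N^+(x)\setminus\{z\}$, $Z^-:=N^-(z)\setminus\{x\}$, $Z^+:=N^+(z)\setminus\{x\}$, each of size $\ge n/3$. To find $a\in X^+\cap Z^-$: note $|X^+|+|Z^-|\ge 2n/3 > n/3 \ge$ (number of vertices outside $Z^+$ that we could afford), but more to the point consider that every vertex of $Z^+$ has out-degree $\ge n/3+1$; counting arcs from $Z^+$ into $Z^+$, the average in-degree within $Z^+$ of a vertex is $\ge (n/3+1) - (n-|Z^+|) \ge (n/3+1)-(2n/3) $, again negative — so instead count arcs from $Z^+$ to all of $V$: there are $\ge |Z^+|(n/3+1)$ of them, and at most $|Z^+|$ of them are "wasted" going outside... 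Let me stop grinding: the structural point I am confident about is that one should \textbf{first} select $z\in N^+(x)$, \textbf{then} work entirely inside the two sets $N^+(z)$ and $N^-(z)\cup N^+(x)$, picking $b,y$ from $N^+(z)$ via a within-set minimum-out-degree count (each vertex of $N^+(z)$ sends $\ge n/3+1$ arcs out, so by a counting/pigeonhole argument some $b\in N^+(z)$ has an out-neighbour $y$ also in $N^+(z)$), and picking $a\in N^+(x)\cap N^-(z)$ by observing that once $z$ is fixed the sets $N^+(x)$ and $N^-(z)$ each have $\ge n/3+1$ elements and a short counting argument (or an application of Lemma~\ref{sublemma1}-style reasoning) yields a common element distinct from the four vertices $x,z,b,y$ already chosen, since $4$ is negligible. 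The main obstacle, to repeat, is handling the tightness of the $2\delta^0 - n$ bound; the resolution is to avoid ever intersecting two neighbourhoods of \emph{different} vertices when the ambient set is all of $V(D)$, and instead localise inside $N^+(z)$ where degrees are still $\approx n/3$ but the universe has shrunk.
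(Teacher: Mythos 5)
The paper itself does not prove Fact~\ref{fac2} (it is quoted from Kelly, K\"uhn and Osthus \cite{Kelly(2009)}), so your argument has to stand on its own, and as written it does not close. There are two genuine gaps. First, your order of choices is wrong: you fix an arbitrary $z\in N^+(x)$ and only afterwards look for $a\in N^+(x)\cap N^-(z)$. For a fixed $z$ this intersection can be empty under $\delta^0\geq n/3+1$ (think of a slight augmentation of the blow-up of the cyclic triangle $V_1\rightarrow V_2\rightarrow V_3\rightarrow V_1$ with $x\in V_1$, $z\in V_2$: then $N^+(x)$ lives essentially in $V_2$ and $N^-(z)$ essentially in $V_1$), and the ``short counting argument'' you promise for this step is never supplied. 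The fix is to choose $a$ and $z$ \emph{simultaneously}, as the tail and head of an arc spanned by $N^+(x)$, rather than choosing $z$ first. Second, for the pair $b,y$ you assert that some $b\in N^+(z)$ has an out-neighbour inside $N^+(z)$ ``by a counting/pigeonhole argument'' using out-degrees only; your own intermediate computations show that this bound is negative (all out-neighbours of a vertex of $N^+(z)$ could a priori lie among the up to $2n/3-1$ vertices outside $N^+(z)$), and you stop before repairing it.

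The missing idea is to use orientedness, i.e.\ that $N^+(v)$ and $N^-(v)$ are disjoint, so that each vertex has at least $2\delta^0(D)\geq 2n/3+2$ \emph{distinct} neighbours. Then for any set $S$ with $|S|\geq n/3+1$ (in particular $S=N^+(x)$ or $S=N^+(z)$), every $v\in S$ has at least $2n/3+2-(n-|S|)\geq 3$ neighbours inside $S$, so $S$ spans an arc; an arc inside $N^+(x)$, relabelled with tail $a$ and head $z$, gives $xa,xz,az$, and an arc inside $N^+(z)$, relabelled with tail $b$ and head $y$, gives $zb,zy,by$. Distinctness then comes for free from the absence of $2$-cycles: $x\rightarrow z$ and $a\rightarrow z$ force $x,a\notin N^+(z)$, so $b,y\notin\{x,a,z\}$. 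Your closing ``localise inside $N^+(z)$'' instinct points in the right direction, but without the disjointness of in- and out-neighbourhoods (and without choosing the arc $az$ inside $N^+(x)$ rather than $z$ alone) the argument cannot be completed.
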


{Note that the vertex $y$ in Fact \ref{fac2} is existential rather than arbitrary, and thus Lemma \ref{sublemma1} is essential for constructing a path with specified endvertices.}
\begin{lemma} \label{sublemma1}
\cite{Kelly(2009)} Let \(C\) be a positive integer. If \(D\) is an oriented graph on \(n \geq 8 \cdot 10^9 C\) vertices with \(\delta^0(D) \geq n/3 - C + 1\), then for any pair of distinct vertices \(s\) and \(t\), there exists an \((s,t)\)-path of length 3, 4, or 5.
\end{lemma}

We now establish Lemma \ref{lemma3}, which guarantees the existence of short path with specified length between any two vertices.
\begin{lemma}[Short-path existence lemma]\label{lemma3}
For \(l \geq 4\), if \(D\) is an oriented graph on \(n \geq 10^{10}l\) vertices with \(\delta^0 (D) \geq n/3+6\), then \(D\) contains an \((x,y)\)-path of length \(l\) for any two vertices \(x, y \in V(D)\) (\(x\) and \(y\) are not necessarily distinct).
\end{lemma}
\begin{proof}
Let \(D\) be an oriented graph on \(n\) vertices with \(\delta^0(D) \geq n/3 + 6\), and let \(x, y \in V(D)\) be distinct vertices. We aim to construct an \((x,y)\)-path of length \(l\).

For \(l \leq 6\), by the minimum semi-degree condition, we can construct a path \(Q\) of length \(l-4 \leq 2\) from \(x\) to some vertex \(v\) avoiding \(y\). Applying Lemma \ref{le3} to \((D \setminus V(Q)) \cup \{v\}\), we find a \((v,y)\)-path of length 4, which combined with \(Q\) yields an \((x,y)\)-path of length \(l\).

For \(l \geq 7\) and \(n \geq 10^{10}l\), by Fact \ref{fac2}, there exists an \(xu\)-butterfly {with vertex set $\{a,b,z,x,u\}$} for some vertex \(u \in V(D) \setminus \{y\}\). We greedily construct a path \(P\) of length \(l-7\) from \(u\) to some vertex \(v\), avoiding \(a, b, z, x, y\). The minimum semi-degree condition ensures the existence of such a path. Next, we apply Lemma \ref{sublemma1} to \(D - (\{a, b, z, x, u\} \cup (V(P) \setminus \{v\}))\) with \(C = l-7\), {$s:=v$ and $t:=y$}. Since \(n \geq 10^{10}l\), the conditions \(n - l - 2 \geq 8 \cdot 10^9 C\) and $\delta ^0(D  \setminus (\{a, b, z, x, u\} \cup V(P))\cup \{v\})\geq (n-C)/3-C+1 $ hold, and thus we obtain a \((v,y)\)-path of length 3,4 or 5. Finally, we select a path from \(x\) to \(u\) in the \(xu\)-butterfly of appropriate length \(k \in \{2, 3, 4\}\) such that \(k + (l - 7) + m = l\), where \(m \in \{3, 4, 5\}\) is the length of the \((v,y)\)-path. This completes the construction of an \((x,y)\)-path of length \(l\).
\end{proof}

\subsection{Partitioning tools}

In this subsection, we introduce two partitioning tools. One of them is directed version of Szemer\'{e}di Regular Lemma. The density of a bipartite graph $G$ with vertex classes $A$ and $B$ is defined to be $\boldsymbol{d_G(A, B)}= \frac{e(A, B)}{|A||B|}$. We often write $\boldsymbol{d(A, B)}$ if this is unambiguous. Given $\varepsilon >0$, we say that $G$ is \textbf{$\varepsilon $-regular} if for all subsets $X\subseteq A$ and $Y\subseteq B$ with $|X|>\varepsilon |A|$ and $|Y|>\varepsilon |B|$ we have that $|d(X, Y )-d(A, B)| < \varepsilon$. Given $d\in [0, 1]$ we say that $G$ is \textbf{$\boldsymbol{(\varepsilon, d)}$-super-regular} if it is $\varepsilon$-regular and furthermore $d_G(a)\geq (d-\varepsilon)|B|$ for all $a\in A$ and $d_G(b)\geq (d-\varepsilon)|A|$ for all $b\in B$.

In 1978, Szemer\'{e}di proposed Regular Lemma on graphs, and later Alon and Shapira \cite{Alon(2004)} extended it to the digraph version.

\begin{lemma}\label{lem1}
\cite{Alon(2004)} \textbf{\emph{(}Diregularity Lemma\emph{)}} For every $\varepsilon \in (0, 1)$ and $M'>0$ there are numbers $M(\varepsilon,M')$ and $n_0$ such that if $D$ is a digraph on $n\geq n_0$ vertices, and $d\in [0, 1]$ is any real number, then there is a partition of the vertices of $D$ into $V_0, V_1,\ldots , V_l$ and a spanning subdigraph $D'$ of $D$ such that the following holds:
\begin{itemize}[itemsep=0pt, topsep=0.5pt,parsep=1pt]
\item $|V_0|\leq \varepsilon n$, $|V_1| = \cdots = |V_l|$ with $M' \leq l \leq M$,
\item for each $\sigma \in \{+,-\}$, $d^{\sigma}_{D'}(x) > d^{\sigma}_D(x)- (d + \varepsilon)n$ for all vertices $x\in D$,
\item for all $i\in [l]$ the digraph $D'[V_i]$ is arcless,
\item for all $1\leq i, j\leq l$ with $i \neq j$ the bipartite graph, whose vertex classes are $V_i$ and $V_j$ and whose arcs are all directed from $V_i$ to $V_j$ in $D'$, is $ \varepsilon$-regular and has density either $0$ or density at least $d$.
\end{itemize}
\end{lemma}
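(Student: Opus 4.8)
The plan is to mimic Szemer\'edi's proof of the (undirected) Regularity Lemma, carried out for digraphs by keeping track of \emph{ordered} pairs of parts. First I would set up the energy-increment machinery. For a partition $\mathcal{P}=\{W_1,\ldots,W_m\}$ of $V(D)$, define the mean-square density
\[
q(\mathcal{P})=\sum_{i\neq j}\frac{|W_i|\,|W_j|}{n^2}\,d(W_i,W_j)^2,
\]
where $d(W_i,W_j)=a(W_i,W_j)/(|W_i||W_j|)$ counts only the arcs directed from $W_i$ to $W_j$, so that the ordered pairs $(W_i,W_j)$ and $(W_j,W_i)$ contribute separately; trivially $0\le q(\mathcal{P})\le 1$. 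The two facts driving the iteration are standard: (a) refining $\mathcal{P}$ never decreases $q$ (defect form of the Cauchy--Schwarz inequality); and (b) if $\mathcal{P}$ fails to be $\varepsilon$-regular because some ordered pair $(W_i,W_j)$ has witnessing sets $X\subseteq W_i$, $Y\subseteq W_j$ with $|d(X,Y)-d(W_i,W_j)|\ge\varepsilon$, then simultaneously subdividing every class of $\mathcal{P}$ by all the witnessing sets raises $q$ by at least a positive constant depending only on $\varepsilon$. Since $q$ cannot exceed $1$, this refinement step can occur only a bounded (in $\varepsilon$) number of times, which --- together with the usual equalization of class sizes at each step --- bounds the number of parts of the final $\varepsilon$-regular equipartition by a constant $M=M(\varepsilon,M')$; starting from an arbitrary partition into exactly $M'$ parts and only ever refining guarantees $l\ge M'$.

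Second, from an $\varepsilon$-regular equipartition $V_1,\ldots,V_l$ of $V(D)\setminus V_0$ with $|V_0|\le\varepsilon n$ I would extract the spanning subdigraph $D'$ by deleting, for every vertex, all of its arcs lying inside its own class, all of its arcs incident to $V_0$, and all of its arcs going to or coming from a class $V_j$ for which the bipartite digraph between its class and $V_j$ (in the relevant direction) is not $\varepsilon$-regular or has density less than $d$. Then $D'[V_i]$ is empty by construction, and every surviving ordered pair is $\varepsilon$-regular of density at least $d$. The point is the degree drop: a fixed $x\in V_i$ loses at most $|V_i|\le n/l$ arcs inside its class, at most $|V_0|\le\varepsilon n$ arcs to $V_0$, fewer than $dn$ arcs into the low-density classes (summing the at most $l$ densities, each below $d$, against classes of total size at most $n$), and at most $\varepsilon n$ arcs into the irregular classes (at most $\varepsilon l$ of the pairs $(V_i,\cdot)$ are irregular, by regularity of the partition). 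After absorbing the negligible $n/l\le\varepsilon n$ term and rescaling $\varepsilon,d$ by fixed factors at the outset, this gives a loss of at most $(d+\varepsilon)n$ out-arcs, and symmetrically at most $(d+\varepsilon)n$ in-arcs, as required.

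The main obstacle --- and the only genuinely digraph-specific point --- is that the energy argument and all of the post-processing must be done \emph{per ordered pair}: the witnesses of irregularity for $(W_i,W_j)$ and for $(W_j,W_i)$ must both be used in each refinement, and $q$ must be summed over ordered pairs, so that the increment lemma still applies and the final $\varepsilon$-regularity and density-$d$ conclusions hold separately in each arc direction rather than merely for the underlying undirected pair. Everything else is the classical bookkeeping of Szemer\'edi's lemma, with the class-size equalization (moving remainder vertices into $V_0$ at each step) chosen fine enough that $|V_0|$ stays below $\varepsilon n$ throughout.
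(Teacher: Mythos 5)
The paper does not prove Lemma \ref{lem1}: it is quoted from Alon and Shapira \cite{Alon(2004)}, and the degree-form packaging is the same one used in \cite{Keevash(2009)}. There is therefore no internal proof to compare against, and your sketch is a reconstruction of the standard route --- Szemer\'edi's energy-increment argument carried out over \emph{ordered} pairs of parts, followed by the usual ``degree form'' cleanup. That route is correct, and you rightly isolate the only genuinely digraph-specific point, namely that the mean-square density and the witnessing sets must be tracked separately for each arc direction.

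Your degree-drop accounting, however, contains two genuine gaps, and they sit exactly where the degree form needs extra care. First, you assert that at most $\varepsilon l$ of the ordered pairs $(V_i,\cdot)$ are irregular ``by regularity of the partition''; the regularity lemma only bounds the \emph{total} number of irregular ordered pairs by $\varepsilon l^2$, so a single class can lie in nearly $l$ of them, and a vertex of that class could then lose almost all of its arcs. Second, you assert that $x\in V_i$ loses fewer than $dn$ arcs to the low-density classes by ``summing the at most $l$ densities, each below $d$, against classes of total size at most $n$''; this sums $d(V_i,V_j)\,|V_j|$, which bounds the \emph{average} out-degree from $V_i$ into $V_j$, not the out-degree of the particular vertex $x$, and a single vertex can have out-degree as large as $|V_j|$ into a pair of density below $d$. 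Both defects are repaired the same way: apply the regularity lemma with a much smaller auxiliary parameter $\varepsilon'\ll\min\{\varepsilon,d\}$; sweep into $V_0$ every class lying in more than $\sqrt{\varepsilon'}\,l$ irregular pairs; use $\varepsilon'$-regularity to see that, in each surviving low-density pair $(V_i,V_j)$, at most $\varepsilon'|V_i|$ vertices of $V_i$ have out-degree into $V_j$ exceeding $(d(V_i,V_j)+\varepsilon')|V_j|$, and sweep those exceptional vertices into $V_0$ as well; then re-equalize the remaining class sizes. Only after such a sweep does the per-vertex bound $d^{\sigma}_{D'}(x) > d^{\sigma}_D(x) - (d+\varepsilon)n$ hold for every $x$; as written, your argument establishes only an averaged version of it.
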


Given $(V_0, V_1,\ldots , V_M)$ and the spanning digraph $D'$ given by the Diregularity lemma, the \textbf{reduced digraph} $R'$ with parameters $(\varepsilon, d)$ is the digraph whose vertex set is $[M]$ and in which $ij$ is an arc if and only if the bipartite digraph whose vertex classes are $V_i$ and $V_j$ and whose arcs are all the arcs from $V_i$ to $V_j$ in $D'$ is $\varepsilon$-regular and has density at least $d$. Note that $R'$ is not necessarily an oriented graph even if $D$ is. The next lemma shows that there is a reduced spanning oriented {subgraph} of $ R'$, which still almost inherits the minimum semi-degree of $R'$.

\begin{lemma}\label{lem2}
\cite{Keevash(2009)} For every $\varepsilon\in (0, 1)$, there exist numbers $M' = M'(\varepsilon)$ and $n_0 = n_0(\varepsilon)$ such that the following holds. Let $d\in [0, 1]$ with $\varepsilon \leq d/2$. Let $D$ be an oriented graph of order $n\geq n_0$ and let $R'$ be the reduced digraph with parameters $(\varepsilon, d)$ obtained by applying the Diregularity lemma to $D$ with $M'$ as the lower bound on the number of clusters. Then $R'$ has a spanning oriented subgraph $R$ such that $\delta^0(R)\geq (\delta^0(D)/|D|- (3\varepsilon + d))|R|$.
\end{lemma}

We present another partitioning tool by generalizing the results of Keevash and Sudakov \cite{sudakov(2009)} to multi-part random partitions, namely Lemma \ref{lemma2}, which is particularly useful for optimizing vertex allocation in long subdivision paths. This process relies on the following essential lemma.

\begin{lemma}\label{chva} (Chv\'{a}tal \cite{chva}). Let
$$
F(n, \gamma, k, \delta)=\binom{n}{k}^{-1} \sum_{i=0}^{(\gamma-\delta) k}\binom{\gamma n}{i}\binom{(1-\gamma) n}{k-i}
$$
denote the probability that a random subset of $k$ elements out of $n, \gamma n$ of which are marked, contains at most $(\gamma-\delta) k$ marked elements; then
$$
F(n, \gamma, k, \delta) \leqslant \mathrm{e}^{-2 \delta^2 k}.
$$
\end{lemma}

In \cite{Alon(1996)}, Alon and Fischer gave a random partition of a graph such that each vertex has a proportional distribution of neighbours in each part. In \cite{sudakov(2009)}, Keevash and Sudakov provided the corresponding digraph version. Lemma \ref{key1} further generalizes these results by offering a random partition of a large subset $U$ such that each vertex with large neighbours in $U$ has a proportional distribution of neighbours in each part.

\begin{lemma}\label{key1}
Let $\varepsilon$ and $ \beta$ {be} two real numbers such that $0< \varepsilon \leq  \beta <1$. There exists a constant $n_0 = n_0(\varepsilon)$ such that if $D$ is an oriented graph on $n > n_0$ vertices with a subset $U\subseteq V(D)$ of order at least $\beta n$, and $m,l$ are two integers satisfying $m \geqslant \varepsilon |U|, l \geqslant \varepsilon |U|$ and $m + l = |U|$, then there exists a partition of $U$ into two sets $A, B$ of orders $m, l$, respectively, such that for any vertex $v\in V(D)$ with $d^+_{U}(v)\geqslant \beta |U|$, we have $d^+_{A}(v)\geqslant \beta m- n^{2/3} $ and  $d^+_{B}(v)\geqslant \beta l- n^{2/3} $. Moreover, for any vertex $v\in V(D)$ with $d^-_{U}(v)\geqslant \beta |U|$, we have  $d^-_{A}(v)\geqslant (\beta - n^{-1 / 3}) m$  and $d^-_{B}(v)\geqslant (\beta - n^{-1 / 3}) l$.
\end{lemma}

\begin{proof}
We randomly choose a partition of the vertices of \(U\) into sets \(A\) and \(B\) with orders \(m\) and \(l\), respectively, such that each possible partition is equally probable. Let \(v\) be a vertex in \(V(D)\) with \(d^+_{U}(v) \geq \beta |U|\). Applying Lemma \ref{chva} with \(n := |U|, \gamma :=\beta, k:=m,\) and \( \delta:=n^{-1/3}\), the probability that fewer than \(\left(\beta - n^{-1 / 3}\right) m\) out-neighbours of \(v\) lie in \(A\) is at most \(\mathrm{e}^{-2 n^{-2 / 3} m} \leq \mathrm{e}^{-2 \varepsilon \beta n^{1 / 3}}\leq \mathrm{e}^{-2 \varepsilon ^2  n^{1 / 3}}\). A similar estimate applies to the out-neighbours of \(v\) in \(B\). Thus, the probability that any of these conditions is not satisfied for some vertex in \(V(D)\) is at most \(2 n \mathrm{e}^{-2 \varepsilon^2 n^{1 / 3}}\). We can choose \(n_0\) such that for \(n > n_0\), this probability is less than \(1/2\), ensuring the existence of the required partition.
\end{proof}

By repeatedly applying Lemma \ref{key1}, we can obtain the following more generalized result. This provides a random partition of a digraph such that each vertex has a proportional distribution of in- and out-neighbours in each part.

\begin{lemma}\label{lemma2}
Let $\varepsilon, \beta$ be two positive real numbers such that $0< \varepsilon \leq  \beta <1$, and let $s$ with $s\leq 1/\varepsilon$  and $M:=M(\varepsilon)$ be two positive integers. There exists a constant $n_0 = n_0(s,M,\varepsilon)$, such that the following conclusion holds. {Let $D$ be} an oriented graph on $n > n_0$ vertices with a partition $(V_0,V_1,V_2,\ldots, V_M)$, where each $V_i$ $(i\in [M])$ satisfies $| V_i|\geqslant n/2M$. Let $m^i_l, i\in [s], l\in [M]$, be integers satisfying $ m^i_l\geq \varepsilon |V_l|$ for all $m^i_l$ and $  \sum_{i=1}^s m^i_l=\vert V_l\vert $ for all $l\in [M]$. Then there exists a partition $(S^1_l,\ldots,S^{s}_l)$ of each part $V_l, l\in [M]$, satisfying
\begin{itemize}[itemsep=0pt, topsep=0.5pt,parsep=1pt]
\item[(i)] $\vert S^{i}_l\vert = m^i_l$ for $i\in [s] $;
\item[(ii)] For every vertex $v\in V(D)$ with $d^+_{V_l}(v)\geqslant \beta |V_l|$ (resp., $ d^-_{V_l}(v)\geqslant \beta |V_l|$), the number of out-neighbours (resp. in-neighbours) of $v$ in $S^{i}_l$ is at least $\beta m^i_l - 2sn^{2/3}$.
\end{itemize}
\end{lemma}

\begin{proof}
For each \( l \in [M] \), we iteratively partition \( V_l \) into \( s \) subsets \( S^1_l, \ldots, S^s_l \) as follows: Initially, set \( R^0_l := V_l \). For each \( i = s, s-1, \ldots, 2 \), apply Lemma \ref{key1} to the subset \( U := R^{s - i}_{l} \) with parameters
     \[
     m := m^i_l, \quad l := \sum_{j=1}^{i-1} m^j_l, \quad \beta := \beta, \quad \varepsilon := \varepsilon,
     \]
we obtain a partition \( (R^{s - i + 1}_{l}, S^i_l,\ldots,S^s_l) \), where all \( |S^i_l| = m^i_l \) and $|R^{s - i + 1}_{l}|=\sum_{j=1}^{i-1} m^j_l$. Update $i:=i-1$. Repeat the above process until \( i = 1 \). At this point, \( V_l \) has a partition $(R^{s -1}_{l}, S^2_l,\ldots,S^s_l )$, and let \( S^1_l := R^{s-1}_{l} \). This process ensures \( |S^i_l| = m^i_l \) for all \( i \in [s] \).

In this process, each application of Lemma \ref{key1} introduces an additive error of at most \( n^{2/3} \) in the number of out-neighbours (resp., in-neighbours) for any vertex \( v \). Since the partition is constructed via \( s - 1 \) iterations, the total error accumulates to \( 2(s - 1)n^{2/3} \), which is bounded by \( 2sn^{2/3} \). We thus verify (ii). For any vertex \( v \) with \( d^+_{V_l}(v) \geq \beta |V_l| \), the number of out-neighbours of $v$ in each \( S^i_l \) is at least $\beta m^i_l - 2sn^{2/3}.$ Similarly, for in-neighbours with \( d^-_{V_l}(v) \geq \beta |V_l| \), the same bound holds.
\end{proof}

\section{Proof of Theorem \ref{main}}
In this section, we prove Theorem \ref{main}. Before that, we introduce necessary definitions and lemmas.
\subsection{Preliminaries}
To utilize the extremal structure proposed by Keevash, K\"{u}hn, and Osthus \cite{Keevash(2009)}, we adopt their notation. {Before discussing the extremal case, we need} the following notation. Let $\alpha <1$ be a real number. Consider an oriented graph \(D\) with a vertex partition \((D_1, D_2, D_3, D_4)\). {Given a vertex \( x \in V(D) \), we will use the compact notation \( x \) has property \( W: (D_1)_{\tau_1}^{\sigma_1} (D_2)_{\tau_2}^{\sigma_2} (D_3)_{\tau_3}^{\sigma_3} (D_4)_{\tau_4}^{\sigma_4} \) as follows. The notation starts with some \( W \in \{D_1,D_2,D_3,D_4\} \), namely the set that \( x \) belongs to. Next, for each of \( D_i, i\in [4] \), its superscript marker \(\sigma_i\) describes the size of the intersection \(D_i \cap N^+_D(x)\), and the subscript marker \(\tau_i\) describes the size of the intersection \(D_i \cap N^-_D(x)\). A symbol \(>\alpha\) indicates that the corresponding intersection has size at least \(\alpha |D_i|\), while a symbol \(<\alpha\) indicates that the intersection has size at most \(\alpha |D_i|\). The absence of a symbol means that no statement is made about that particular intersection, and we can omit any of \( D_1,D_2,D_3,D_4 \) if it has no superscript marker or subscript marker. For example, to say \( x \) has property \( D_2:(D_1)^{>1/4}(D_3)_{<1/3}^{<1/2} \) means that \( x \in D_2 \), \( |N^+_D(x)\cap D_1| > |D_1|/4 \), \( |N^+_D(x) \cap D_3| < |D_3|/2 \) and \( |N^-_D(x) \cap D_3| < |D_3|/3. \)} Based on this notation, a vertex in $D$ is defined to be $\boldsymbol{\alpha}$-\textbf{acceptable} if it satisfies one of the following 16 properties:
\begin{itemize}[itemsep=0pt, topsep=0.5pt,parsep=1pt]
\item $D_1:(D_2)^{>\alpha}(D_4)_{>\alpha}, D_1:(D_1)^{>\alpha}(D_4)_{>\alpha}, D_1:(D_1)_{>\alpha}(D_2)^{>\alpha}, D_1:(D_1)^{>\alpha}_{>\alpha},$
\item $D_2:(D_1)_{>\alpha}(D_3)^{>\alpha}, D_2:(D_1)_{>\alpha}(D_4)^{>\alpha}, D_2:(D_3)^{>\alpha}(D_4)_{>\alpha}, D_2:(D_4)^{>\alpha}_{>\alpha},$
\item $D_3:(D_2)_{>\alpha}(D_4)^{>\alpha}, D_3:(D_2)_{>\alpha}(D_3)^{>\alpha}, D_3:(D_3)_{>\alpha}(D_4)^{>\alpha}, D_3:(D_3)^{>\alpha}_{>\alpha},$
\item $D_4:(D_1)^{>\alpha}(D_3)_{>\alpha}, D_4:(D_1)^{>\alpha}(D_2)_{>\alpha}, D_4:(D_2)^{>\alpha}(D_3)_{>\alpha}, D_4:(D_2)^{>\alpha}_{>\alpha}.$
\end{itemize}
{In other words, a vertex is acceptable if it has a significant out-neighborhood in one of its two out-classes and a significant in-neighborhood in one of its two in-classes, where `out-classes' and
`in-classes' are to be understood with reference to the extremal partition in Fig. \ref{tu3}.}

\begin{definition}\label{def2}
Let \(c \leq 1\) be a real number. A vertex \(x\) is \textbf{\(c\)-circular}  in $D$ if \(x \in D_i\), \(x\) has at least \(c|D_{i+1}|\) out-neighbours in \(D_{i+1}\), and \(x\) has at least \(c|D_{i-1}|\) in-neighbours in \(D_{i-1}\) for some $i\leq 4$ (counting modulo 4).
\end{definition}

It is worth noting that the concept of $c$-circular is closely related to $\alpha$-acceptability. Given a partition, a $c$-circular vertex is $\alpha$-acceptable if $c\geq \alpha$. Now we give the definition of extremal case.

\begin{definition}\label{def4}
A digraph \(D\) is an \textbf{extremal case with parameters \((c_1, c_2, c_3, c_4)\)} if there exists a partition \((D_1, D_2, D_3, D_4)\) of \(V(D)\) such that:
\begin{itemize}[itemsep=0pt, topsep=0.5pt,parsep=1pt]
\item[$(i)$] $|D_i|=(1/4\pm c_1\mu)n$, for $i\in [4]$.
\item[$(ii)$] $a(D_i, D_{i+1}) > (1-c_2\mu)n^2/16$, for $i\in [4]$ $(mod \  4)$; $a(D_i)>(1/2- c_2\mu)n^2/16$, for $i\in \{1,3\}$;  $a(D_i, D_j) > (1/2- c_2\mu)n^2/16$, for $i,j\in \{2,4\}$ with $i\neq j$; $a(D_4) + a(D_4, D_3)\leq c_2\mu n^2$;
\item[$(iii)$] Every vertex of $D$ is \(c_3\)-acceptable, and the number of vertices that are not \(c_4\)-circular is at most \(\sqrt{\mu} n\) in \(D\).
\end{itemize}
Moreover, we also call the partition $(D_1,D_2,D_3,D_4)$ the \textbf{extremal partition with parameters $(c_1, c_2, c_3, c_4)$}.
\end{definition}

Generally speaking, an extremal oriented graph has an extremal partition $(D_1,D_2,D_3,D_4)$ where the four parts are almost equal in order, both $D_1$ and $D_3$ almost induce tournaments, and the in-neighbours and out-neighbours of all vertices have specific distributions, see Fig. \ref{tu3}.

\begin{figure}[H]
\centering
   \includegraphics[scale=0.75]{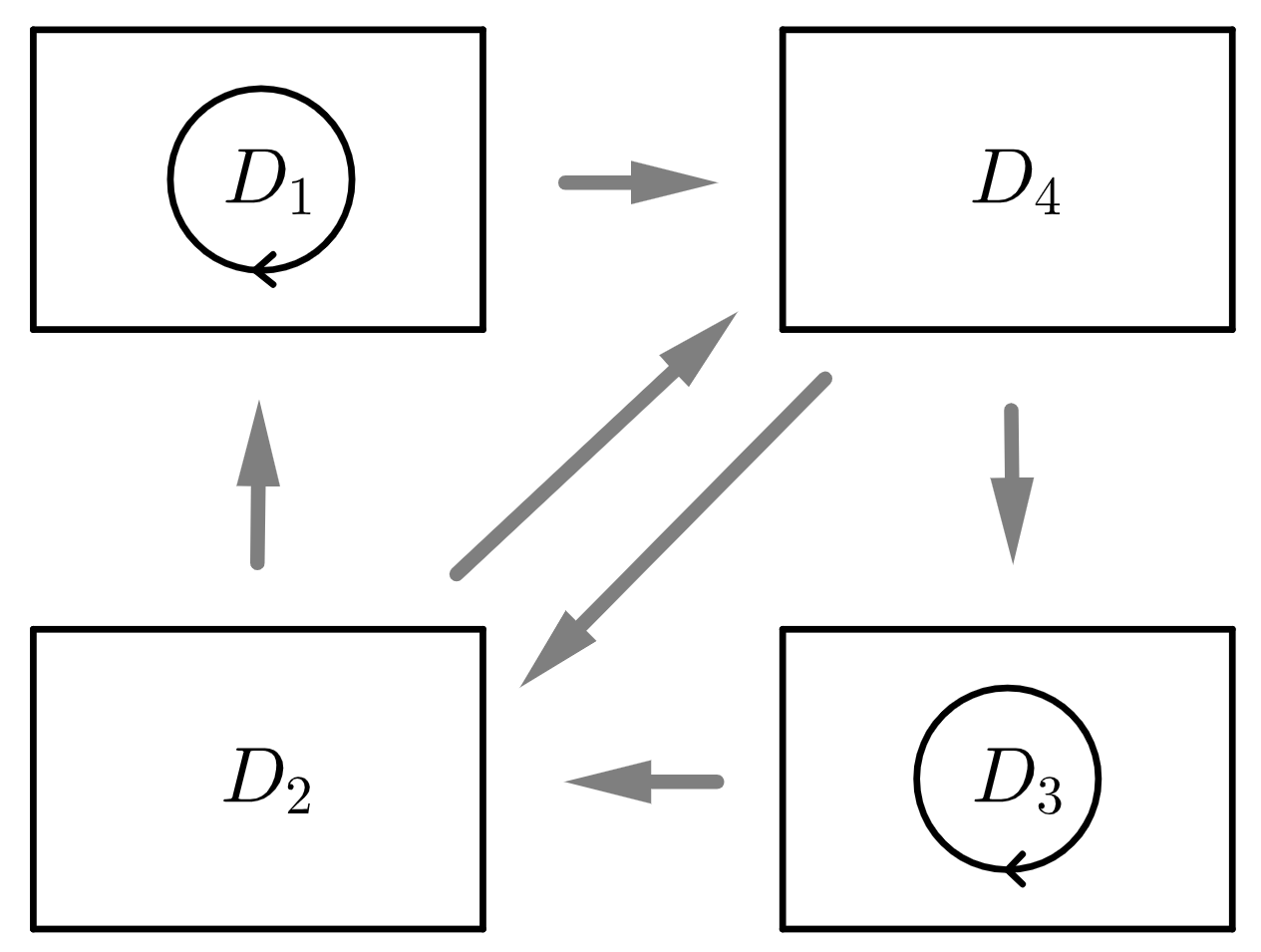}
\caption{An extremal partition \((D_1, D_2, D_3, D_4)\).}
\label{tu3}
\end{figure}
The following lemma provides key properties of extremal oriented graphs, which  is helpful for proving the extremal case.

\begin{lemma}\label{4.1}
Let $h,q$ be two positive integers and let $\mu$ and $\rho$ be two real numbers such that $0<\mu \ll \rho \ll q^{-1}h^{-1}$. There exists a constant $n_0 = n_0(h,q)$, such that the following conclusion holds. Suppose \(D\) is an oriented graph on \(n \geq n_0\) vertices with \(\delta^0(D) \geq 3n/8 - 2q\rho n\). Let \(S \subseteq V(D)\) with \(|S| \leq 2q\rho n\) and \(B \subseteq V(D)\) with \(|B| = h\). If \(D \setminus B\) has an extremal partition \((D_1, D_2, D_3, D_4)\) with parameters $(32$, $10^4$, $1/300$, $1-10^4\sqrt{\mu })$, then the following statements hold:
\begin{itemize}[itemsep=0pt, topsep=0.5pt,parsep=1pt]
\item[(i)] If $M$ is a subset of $D_1$ or $D_3$ with order at least $ 200\sqrt{ \mu}n$, then $M$ contains a 3-path.
\item[(ii)] For any two vertices $u,v\in B$ (not necessarily distinct), there exist two indices $i, j\in [4]$ with $i\equiv j + 1$ or $i\equiv j + 2$ \emph{(mod 4)} such that $d^+_{D_i\setminus S}(u)\geq n/48$, $d^-_{D_j\setminus S}(v)\geq n/48$.
\end{itemize}
\end{lemma}
\begin{proof}
To prove (i), w.l.o.g., suppose that \(M \subseteq D_1\) with \(|M| = 200\sqrt{\mu}n\) and \(D[M]\) contains no 3-path (the case \(M \subseteq D_3\) follows analogously). Recall that every  tournament on 4 vertices contains a 3-path. {If \(D_1\) induces a tournament, it has exactly \(\frac{|D_1|(|D_1| - 1)}{2}\) arcs, and every induced subgraph of \(D_1\) on 4 vertices would contain a 3-path. Hence, every induced subgraph of \(M\) on 4 vertices must miss at least one arc to avoid forming a tournament of order 4. The number of subsets of \(M\) with 4 vertices is \(\binom{|M|}{4}\), and each missing arc lies in at most \(\binom{|M| - 2}{2}\) such subsets. It follows that \(D[M]\) has at least \(\frac{\binom{|M|}{4}}{\binom{|M| - 2}{2}}\) missing arcs compared to a tournament of the same order.}

Recall that \(|D_1|\leq  (1/4 + 32\mu)(n-h)\leq (1/4 + 32\mu)n\). The total number of arcs in \(D_1\) is thus at most:
\[
\frac{(|D_1|)(|D_1| - 1)}{2} - \frac{\binom{|M|}{4}}{\binom{|M| - 2}{2}} \leq \frac{(n/4 + 33\mu n)(n/4 + 33\mu n - 1)}{2} - \frac{\binom{200\sqrt{\mu}n}{4}}{\binom{200\sqrt{\mu}n - 2}{2}}.
\]
Simplifying this expression, the number of arcs in \(D_1\) would be less than \((1/2 - 2\cdot 10^4\mu)n^2/16\), contradicting the given condition \(a(D_1) > (1/2 - 10^4\mu)(n-h)^2/16\geq (1/2 - 2\cdot 10^4\mu)n^2/16\)  (let $n_0$ be a positive integer such that $h,q\ll n_0$). Therefore, \(M\) must contain a path of length 3.

For (ii), consider any two vertices \(u, v \in B\). Since \(\delta^0(D) \geq 3n/8 - 2q\rho n\), \(u\) has at least \((3/8 - 4q\rho)n\) out-neighbours in \(D \setminus S\), while \(v\) has at least \((3/8 - 4q\rho)n\) in-neighbours in \(D \setminus S\). Note that the extremal partition \((D_1, D_2, D_3, D_4)\) has \(|D_i| \leq (1/4 + 32\mu)n\). By the pigeonhole principle, there exist two {parts} \(D_{i'}\) and \(D_{i''}\) ($i',i''\in [4]$) such that both \(d^+_{D_{i'}}(u)\) and \(d^+_{D_{i''}}(u)\) are at least \(\frac{(3/8 - 4q \rho )n - h - (1/4 + 32\mu )n}{3} \geq n/48\) (as $h\ll n_0\leq n$). Similarly, for \(v\), there exist two {parts} \(D_{j'}\) and \(D_{j''}\) such that \(d^-_{D_{j'}}(v) \geq n/48\) and \(d^-_{D_{j''}}(v) \geq n/48\). Note that there are four parts in total. Assume no \(i \in \{i', i''\}, j \in \{j', j''\}\) satisfy \(i \equiv j + 1 \pmod{4}\). Then at least one pair \((i, j)\) must have $i\equiv j + 2$ (mod 4), completing the proof.
\end{proof}

The subsequent lemma indicates that, under certain semi-degree conditions, by adjusting  a small number of vertices of an extremal partition and contracting a few short paths, there exists an extremal partition with a specific structure.
\begin{lemma}\label{4.2}
Let $ h,q,b$ be positive integers with $b \leq q$ and $\mu \ll 1$ be a real number. There is an integer $n_0:=n_0(\mu, q)$ such that the following holds. Let $ D $ be an oriented graph of order $ n \geq n_0 $. Suppose $ B \subseteq V(D) $ is a subset of order $h$. If $ D \setminus B $ admits an extremal partition $ (D^*_1, D^*_2, D^*_3, D^*_4) $ with parameters $(32$, $10^4$, $1/300$, $1-10^4\sqrt{\mu })${, then the following holds.} 
\begin{itemize}[itemsep=0pt, topsep=0.5pt,parsep=1pt]
\item[(i)] If $ \delta^0(D) \geq \frac{3n + 3h + 3q - 5}{8} $, then there exists an extremal partition $ (D_1, D_2, D_3, D_4) $ of $V(D)\setminus B$ with parameters $ (10^3, 10^6, 1/400,\ 1-10^6\sqrt{\mu}) $ such that {$ ||D_2| - |D_4|| = b $}. This partition is obtained by adjusting the location of at most $ 65\mu n $ vertices from $ (D^*_1, D^*_2, D^*_3, D^*_4) $ and contracting a collection of disjoint paths $ Q_1, \ldots, Q_r $ in $ D \setminus B $ with $ \left|\bigcup_{i=1}^r V(Q_i)\right| \leq 650\mu n $.
\item[(ii)]  {If $B=\{u\}$ \emph{(}$h=1$\emph{)} and $ \delta^0(D) \geq \frac{3n  - 4}{8} $}, then there exists an extremal partition $ (D_1,D_2,D_3,D_4) $ of $V(D)$ with parameters $ (10^3, 10^6, 1/400,\ 1-10^6\sqrt{\mu}) $ such that $ |D_2| = |D_4| $. This partition is obtained by adjusting the location of at most $ 65\mu n $ vertices from $ (D^*_1\cup B, D^*_2, D^*_3, D^*_4) $ and contracting a collection of disjoint paths $ Q_1, \ldots, Q_r $ in $ D $ with $ \left|\bigcup_{i=1}^r V(Q_i)\right| \leq 650\mu n $.
\end{itemize}
\end{lemma}

 In the proof of Lemma \ref{4.2}, contractions of paths will play an important role. Given a partition $(D_1,D_2,D_3,D_4)$ of $V(D)$ and a path $P$ in $D$ whose initial vertex $p_{1}$ and final vertex $p_{2}$ lie in the same part $D_i$, the contraction of $P$ yields the following oriented graph $D'$: we add a new vertex $p$ to the part $D_i$ and remove (the vertices of) the path $P$ from $D$; let $N_{D'}^{+}(p)=N^{+}_{D_{i+1}}\left(p_{2}\right)$, $N_{D'}^{-}(p)=N^{-}_{D_{i-1}}\left(p_{1}\right)$, and $ A(D'\setminus \{p\})=A(D'\setminus V(P))$. Note that any cycle in $D'$ that includes $p$ corresponds to a cycle in $D$ that includes $P$.

\begin{proof}\textbf{of Lemma \ref{4.2}.}
First, we prove (i). The lemma is trivial if {\(||D^*_2| - |D^*_4|| = b\)}. Consequently, we assume that \(|D^*_2| - |D^*_4| > b\), \(|D^*_4| - |D^*_2| > b\), \(|D^*_2| - |D^*_4| < b\) or \(|D^*_4| - |D^*_2| < b\). {We next consider the cases \(|D^*_2| - |D^*_4| > b\) or \(|D^*_2| - |D^*_4| < b\), as the remaining two cases are entirely symmetric.} Our aim is to rearrange the positions of a few vertices to reduce the difference between \(|D^*_2|\) and \(|D^*_4| + b\), while ensuring that the newly obtained partition remains extremal with a slight change in its parameters. Given a partition $(O_1,O_2,O_3,O_4)$ of $V(D)$, we classify vertices as follows with $\gamma = 1/400$: if $|O_2| - |O_4| > b$, we call a vertex \textbf{good} if it is $\gamma$-acceptable, and it belongs to $O_4$ or has one of the properties
$O_1:(O_2)_{<\gamma}(O_3)_{<\gamma}$, $O_2:(O_1)^{<\gamma}(O_2)^{<\gamma}_{<\gamma}(O_3)_{<\gamma}$, $O_3 :(O_1)^{<\gamma}(O_2)^{<\gamma}$. In this case, $O_4$ is called the \textbf{good set} and $O_2$ is called the \textbf{bad set}. And if $|O_2| - |O_4| < b$, we call a vertex \textbf{good} if it is $\gamma$-acceptable, and it belongs to $O_2$ or has one of the properties
$O_1:(O_4)^{<\gamma}(O_3)^{<\gamma}$, $O_3 :(O_1)_{<\gamma}(O_4)_{<\gamma}$, $O_4:(O_1)_{<\gamma}(O_4)^{<\gamma}_{<\gamma}(O_3)^{<\gamma}$. In this case, $O_4$ is called the \textbf{bad set} and $O_2$ is called the \textbf{good set}. In either case, we call a vertex \textbf{bad} if it is not good.

 The following shows that the goal can be achieved by relocating the bad vertices. Now we consider the bad vertices in the initial partition $(D^*_1, D^*_2, D^*_3, D^*_4)$ of $V(D)$.

\begin{claim}\label{claim5}
Each bad vertex in the bad set can become \(\gamma\)-acceptable by arranging it in \(D^*_1\cup D^*_3\).
\end{claim}

\begin{proof}
W.l.o.g., assume $D^*_2$ is the bad set and $v$ is a bad vertex in $D^*_2$.
Then $v$ satisfies at least one of properties $D^*_2:(D^*_1)^{>\gamma}$, $D^*_2:(D^*_2)^{>\gamma}$, $D^*_2:(D^*_2)_{>\gamma}$ and $D^*_2:(D^*_3)_{>\gamma}$. By the $1/300$-acceptability of $v$, we get that $v$ has properties $D^*_2:(D^*_1)_{>1/300}$ or $D^*_2:(D^*_4)_{>1/300}$ and $D^*_2:(D^*_4)^{>1/300}$ or $D^*_2:(D^*_3)^{>1/300}$. Hence, if $v$ has properties $D^*_2:(D^*_1)^{>\gamma}$ or $D^*_2:(D^*_2)^{>\gamma}$, then we move the vertex $v$ to the part $D^*_1$. This means that $v$ has property $D^*_1:(D^*_1)^{>\gamma}(D^*_1)_{>1/300}$ or $D^*_1:(D^*_1)^{>\gamma}(D^*_4)_{>1/300}$ or $D^*_1:(D^*_2)^{>\gamma}(D^*_4)_{>1/300}$ or $D^*_1:(D^*_2)^{>\gamma}(D^*_1)_{>1/300}$. By the definition of $\gamma$-acceptable, $v$ is $\gamma$-acceptable after moving. And if $v$ has properties $D^*_2:(D^*_2)_{>\gamma}$ or $D^*_2:(D^*_3)_{>\gamma}$, then $v$ becomes $\gamma$-acceptable after moving the vertex $v$ into $D^*_3$. The case where \( D^*_4 \) is the bad set is analogous.
\end{proof}

\begin{claim}\label{claim4}
We can arrange every bad vertex in $D^*_1\cup D^*_3$ to be a good vertex by moving it to the good set.
\end{claim}

\begin{proof}
Here, we present the proof for the case where \( D^*_4 \) is the good set, as a similar analysis can be applied if \( D^*_2 \) is the good set. If $v$ is a bad vertex in $D^*_1$, we can obtain that $D^*_1:(D^*_2)_{>\gamma} $ or $D^*_1:(D^*_3)_{>\gamma} $. Since $v$ is a $1/300$-acceptable vertex, it follows that $D^*_1:(D^*_1)^{>1/300} $ or $D^*_1:(D^*_2)^{>1/300} $. It is easy to check that we move the vertex $v$ to $D^*_4$ then $v$ becomes $\gamma$-acceptable. Furthermore, it is good. The same argument applies for the case $v\in D^*_3$.
\end{proof}

Let $n_0$ be an integer such that $\mu n_0\geq q $. Define $r^*=\left||D^*_2| - (|D^*_4| + b)\right|$, then $r^*\leq 64\mu n+q \leq 65\mu n$ as $b\leq q$ and $\mu n\geq \mu n_0\geq hq$. {According to the definition of a good set, removing vertices from a bad set or adding vertices to a good set reduces the gap between \(|D^*_2|\) and \(|D^*_4|+b\). Specifically, each application of either Claim \ref{claim5} or Claim \ref{claim4} decreases the gap between \(|D^*_2|\) and \(|D^*_4|+b\) by 1. Thus, if the number of bad vertices in the union of the bad set and \(D^*_1\cup D^*_3\) is at least \(r^*\), then applying Claims \ref{claim5}–\ref{claim4} a total of \(r^*\) times—equivalently, redistributing exactly \(r^*\) bad vertices in the union—yields a new partition \((D_1,D_2,D_3,D_4)\) such that \(|D_2|=|D_4|+b\). By Definition \ref{def4}, it is straightforward to verify that this partition is extremal with parameters \((100, 10^5, 1/350,1-10^5\sqrt{\mu})\)\footnote{For detailed calculations, refer to Property A.1 (II) in the Appendix}, as desired. We therefore assume that the number of bad vertices in the union of the bad set and \(D^*_1\cup D^*_3\) is at most \(r^*\). Redistributing all bad vertices in this union via the application of Claims \ref{claim5}–\ref{claim4} produces a new extremal partition \((D_1,D_2,D_3,D_4)\) with parameters \((100, 10^5, 1/350,1-10^5\sqrt{\mu})\)\footnote{For detailed calculations, refer to Property A.1 (II) in the Appendix}, and every vertex in \(D\setminus B\) is good. Furthermore, we assume that $|D_2|\neq |D_4|+b$, as the desired conclusion then follows immediately.}

Now, define $r=\left||D_2| - (|D_4| + b)\right|$, then $r\leq 65\mu n$. {Next, we claim that there exist $r$ disjoint paths such that contracting these paths reduces the gap between \(|D_2|\) and \(|D_4|+b\) by $r$, where each path is obtained by extending a `special' arc. To this end,} we construct an oriented graph $H^{\prime}$ with $V(H^{\prime})= V(D\setminus B)$ as follows:

\indent $ \bullet$ $A(H^{\prime})=A(D_2, D_1)\cup A(D_2)\cup A(D_3, D_1)\cup A(D_3, D_2)$, when $|D_2|- |D_4|> b$;

\indent $ \bullet$ $A(H^{\prime})=A(D_1, D_3)\cup A(D_1, D_4)\cup A(D_4, D_3)\cup A(D_4)$, when $|D_2|- |D_4|< b$.

\begin{figure}[htbp]  
    \centering  
    \begin{minipage}[t]{0.43\textwidth}
        \centering
        \includegraphics[width=\linewidth]{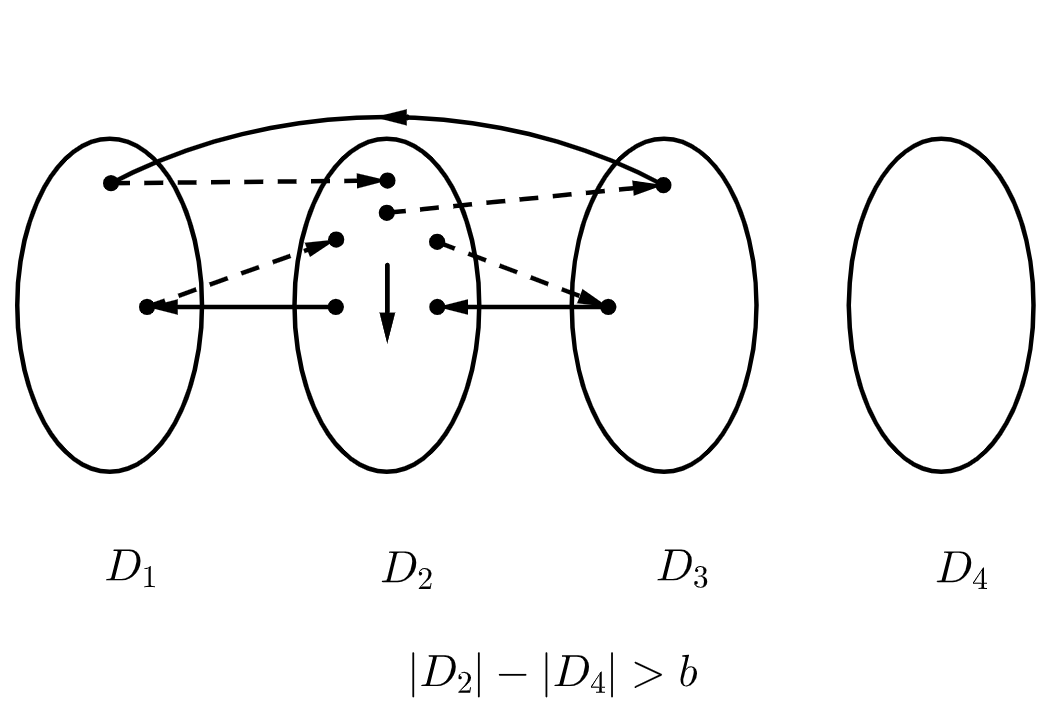}  
    \end{minipage}
    \hspace{0.1\textwidth}  
    \begin{minipage}[t]{0.43\textwidth}
        \centering
        \includegraphics[width=\linewidth]{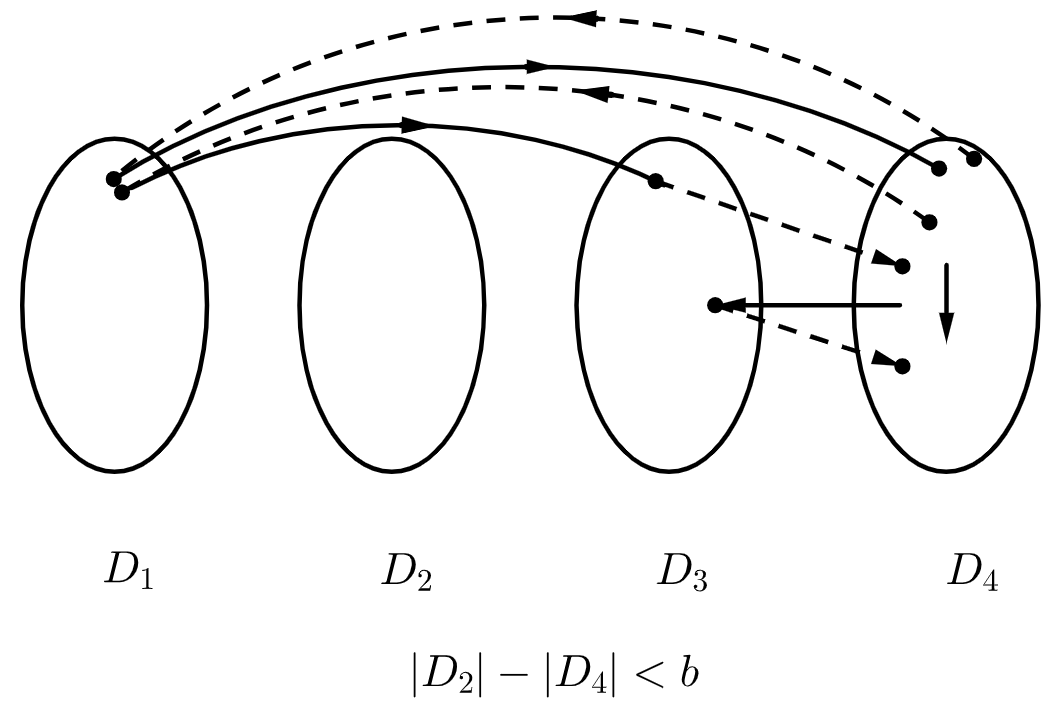}  
    \end{minipage}
    
    \caption{{Illustration of \(H'\): Solid lines denote arcs in \(H'\) (if such arcs exist in $D$), while dashed lines denote arcs that lie in the digraph $D$ but not in \(H'\); the paths formed by the concatenation of solid and dashed lines are exactly the paths \(\{Q'_i \mid i\in [r]\}\) we seek.}}
    \label{tu5}  
\end{figure}
\noindent In either case, choose $M$ to be a maximum matching in $H^{\prime}$. {Then we consider the case $ a(M)\leq  r$ and $a(M)>  r$ separately.} Suppose that $a(M)\geq  r$. {Hereafter, the bad set and good sets are defined with respect to the partition $(D_1,D_2,D_3,D_4)$. Since every vertex in $\bigcup_{i=1}^4 D_i$ is $1/350$-acceptable, it is readily verified that there exist $r$ disjoint paths $\{Q'_i \mid i\in[r]\}$), each containing exactly one arc in $M$ and with endvertices lying in the bad set (see Fig. \ref{tu5}). Subject to this, we choose each path $Q'_i$ to be minimal, and wherever feasible, select its endvertices to be $(1-10^5\sqrt{\mu})$-circular vertices. It should be noted that it may happen that not all endvertices of $Q'_i$ are $(1-10^5\sqrt{\mu})$-circular vertices; when this occurs, we extend $Q'_i$ to a minimal path $Q_i$ in a manner that winds around $D_1,D_2,D_3,D_4$, such that both endvertices of $Q_i$ are $(1-10^5\sqrt{\mu})$-circular vertices in the bad set. This construction implies that each path $Q_i$ has length at most 8, using exactly two more vertices from the bad set than from the good set.} Thus $|\bigcup_{i\in [r]}V(Q_i)|< 10r<650 \mu n$ (here, $9r$ is enough, but we take $10r$ for computational convenience). Further, after contracting these paths, there is a new partition $(D'_1,D'_2,D'_3,D'_4)$ such that $|D'_2|=|D'_4| + b$. By taking $\mu$ small enough and $n_0$ large enough, it is easy to check that $(D'_1,D'_2,D'_3,D'_4)$ is extremal with parameters $ (10^3, 10^6, 1/400,1-10^6\sqrt{\mu}) $\footnote{for a detailed calculation, see Property A.1 (II) in Appendix}. Then these disjoint paths $Q_1,\ldots,Q_r$ and the partition $(D'_1,D'_2,D'_3,D'_4)$ as desired. 

Now assume $a(M)<  r$. Next, we show that this contradicts the assumption $\delta ^0(D)\geq \frac{3n + 3h + 3q - 5}{8}$. Let $L_i = V(M)\cap D_i$ for each $i\in [4]$. We divide the proof into two cases: $|D_2| - |D_4| > b$ or $|D_2| - |D_4| < b$.

\textbf{Case 2.1: $|D_2| - |D_4| > b$}

The maximality of $M$, $\sum_{i\in [4]}|L_i|\leq 2a(M)$, and the definition of the good vertex yield that
\begin{equation*}\label{9}
\begin{aligned}
a(D_3, D_1)&\leq a(L_3, D_1) + a(D_3, L_1 )\leq (|L_3| + |L_1|)\gamma (n/4 + 32\mu n)\leq a(M)n/300.
\end{aligned}
\end{equation*}
Similarly, we also can obtain that $a(D_2,D_1)\leq a(L_2, D_1) + a(D_2, L_1) < a(M)n/300,$ $a(D_3,D_2)\leq a(L_3, D_2) + a(D_3, L_2 ) < a(M)n/300,$ and $a(D_2)\leq 2|L_2|\cdot \gamma (n/4+32\mu n)<a(M)n/150.$
Hence, it follows that
$$\sum_{v\in D_1} d_{D\setminus B}^-(v)\leq \frac{|D_1|\cdot (|D_1-1|)}{2}+ a(M)n/150+ |D_1|\cdot |D_4|,$$
$$\sum_{v\in D_2} d_{D\setminus B}(v)\leq (|D_1|+|D_3|)\cdot |D_2| + 2a(M)n/150+|D_2|\cdot |D_4|,$$
$$\sum_{v\in D_3} d_{D\setminus B}^+(v)\leq \frac{|D_3|\cdot (|D_3-1|)}{2}+ a(M)n/150+ |D_3|\cdot |D_4|.$$

 By pigeonhole principle, there are three vertices $u\in D_1, v\in D_2, w\in D_3$ such that
$$d_{D\setminus B}^-(u)\leq \frac{|D_1|-1}{2}+|D_4|+\frac{a(M)n}{150 |D_1|}<\frac{|D_1|-1}{2}+|D_4|+\frac{a(M)}{30},$$
$$d_{D\setminus B}(v)<|D_1|+|D_3|+|D_4|+ \frac{a(M)}{15},\ \text{ and } d_{D\setminus B}^+(w)<\frac{|D_3|-1}{2} + |D_4| + \frac{a(M)}{30}.$$
Together $|D_2| - |D_4|-b=r>a(M)$ with $|D_1|+|D_2|+|D_3|+|D_4|= n-h$, it yields that
\begin{equation}\label{2}
\begin{aligned}
4\times \frac{3n+3h+3q-5}{8}-3h&\leq d_{D\setminus B}^-(u) + d_{D\setminus B}^+(w) + d_{D\setminus B}(v) \\
&<\frac{3}{2}(n-h-|D_2| + |D_4|)+ \frac{2a(M)}{15} -1.
\end{aligned}
\end{equation}
 Thus (\ref{2}) gives that $0\leq 41(|D_2|-|D_4|)/30+2b/15<-(3q-3)/2.$ This is impossible.

\textbf{Case 2.2: $0\leq |D_2|- |D_4|< b$.}

The similar calculations as above yields that $a(D_1, D_3)< a(M)n/300, $ $a(D_1,D_4)< a(M)n/300,$ $a(D_4,D_3) < a(M)n/300,$ and $a(D_4) <a(M)n/150.$ By Pigeonhole principle, there are three vertices $u\in D_1, v\in D_4, w\in D_3$ such that
$$d_{D\setminus B}^+(u)<\frac{|D_1|-1}{2}+|D_2|+\frac{a(M)}{30},$$
$$d_{D\setminus B}(v)<|D_1|+|D_3|+|D_2|+ \frac{a(M)}{15},\ \text{ and }d_{D\setminus B}^-(w)<\frac{|D_3|-1}{2} + |D_2| + \frac{a(M)}{30}. $$
Together $|D_4|+b-|D_2|=r>a(M)$ with $|D_1|+|D_2|+|D_3|+|D_4|= n-h$, it yields that
\begin{equation*}
\begin{aligned}
4\times \frac{3n+3h+3q-5}{8}-3h&\leq d_{D\setminus B}^+(u) + d_{D\setminus B}^-(w) + d_{D\setminus B}(v) \\
&<\frac{3}{2}(n-h+|D_2| - |D_4|) + \frac{2a(M)}{15}-1.
\end{aligned}
\end{equation*}
This gives $|D_4|-|D_2| < 1-q+4 a(M) / 45.$ On the other hand, we previously observed that $r=|D_4|-|D_2| +b \geqslant a(M)+1$. Combining this gives $41a(M)/45< b-q\leq 0$ (as $b\leq q$), which is impossible. This completes the proof of $(i)$.

{The proof of (ii) follows similar, thus we provide a sketch here.\footnote{The choice of $b$ differs between (i) and (ii), and in (ii), the vertex in $B$ can be both adjusted and contracted,  see Lemma A.1 in Appendix for details} For \( u \in B \), we assign \( u \) to the partition \( (D^*_1, D^*_2, D^*_3, D^*_4) \) of \( D \setminus B \). By Lemma \ref{4.1}(ii), there exist indices \( i, j \in [4] \) with \( i \equiv j + 1 \) or \( i \equiv j + 2 \pmod{4} \) such that \( d^+_{D^*_i \setminus S}(u) \geq n/48 \) and \( d^-_{D^*_j \setminus S}(u) \geq n/48 \), allowing \( u \) to be assigned as a \( (1/100) \)-acceptable vertex to some \( D^*_k \) ($k\in [4]$). Updating the partition \( (D^*_1, D^*_2, D^*_3, D^*_4) \) to cover \( V(D) \), we assume \( |D^*_2| \neq |D^*_4| \) (the case \( |D^*_2| = |D^*_4| \) trivially satisfies the conclusion). Applying Claims \ref{claim5}–\ref{claim4}, we reassign bad vertices so that all vertices become good vertices ($b=0$), thus forming an extremal partition \( (D_1, D_2, D_3, D_4) \) with parameters \( (100, 10^5, 1/350, 1 - 10^5\sqrt{\mu}) \).  Suppose \( |D_2| > |D_4| \) (the dual case is symmetric). Let \( r'' = \big| |D_2| - |D_4| \big| \) and construct an oriented graph \( H' \) on \( V(D) \). Let $M$ be a maximum matching in $H'$. Similarly, we obtain $a(M)<  r''$, then this would contradict the assumption $\delta ^0(D)\geq \frac{3n -4}{8}$. W.l.o.g., we assume that \(|D_2| > |D_4|\) (the dual case is symmetric).

Through analogous computations, we establish the bounds: $a(D_3, D_1)< a(M)n/300,$ $a(D_2,D_1) < a(M)n/300,$ $a(D_3,D_2)< a(M)n/300,$ and $a(D_2)<a(M)n/150.$ By pigeonhole principle, there are three vertices $u\in D_1, v\in D_2, w\in D_3$ such that
$$d_{D}^-(u)\leq \frac{|D_1|-1}{2}+|D_4|+\frac{a(M)n}{150 |D_1|}<\frac{|D_1|-1}{2}+|D_4|+\frac{a(M)}{30},$$
$$d_{D}(v)<|D_1|+|D_3|+|D_4|+ \frac{a(M)}{15},\ \text{ and } d_{D}^+(w)<\frac{|D_3|-1}{2} + |D_4| + \frac{a(M)}{30}.$$
Together with $|D_1|+|D_2|+|D_3|+|D_4|= n$, it yields that
\begin{equation*}
\begin{aligned}
4\times \frac{3n-4}{8}&\leq d_{D}^-(u) + d_{D}^+(w) + d_{D}(v) <\frac{3}{2}(n-|D_2| +|D_4|) -1+ \frac{2a(M)}{15}.
\end{aligned}
\end{equation*}
 Thus $|D_2| - |D_4|=r''>a(M)$ gives that $41a(M)/30<-1/2.$ This is impossible.}
\end{proof}

Lemma \ref{4.3} provides a sufficient condition for Hamiltonicity in an extremal oriented graphs, which is derived from the work of  \cite{Keevash(2009)}. Although \cite{Keevash(2009)} does not explicitly state this result as a lemma or theorem, the essence of the result is embedded within their proof. For the clarity and completeness, we include a detailed proof of Lemma \ref{4.3} in Appendix\footnote{Please refer to Lemma B.1 in Appendix}.
 
 \begin{lemma}\label{4.3}
\cite{Keevash(2009)} There exist a real number \( \eta_0 < 1 \) and an integer $n_0=n_0(\eta_0)$ such that for any $0< \xi  \ll \eta_0$, the following statement holds. If an oriented graph \( D \) of order \( n \geq {n_0} \) has a partition \( (D_1, D_2, D_3, D_4) \) such that
\begin{itemize}[itemsep=0pt, topsep=0.5pt,parsep=1pt]
\item[$(i)$] \( |D_2| = |D_4| \), and $|D_i|=(1/4 \pm \xi  ) |D|$ for all $i\in [4]$;
\item[$(ii)$] if $M$ is a subset of $D_3$ with order at least $ \xi  n$, then $M$ contains a 3-path;
\item[$(iii)$] all but at most $\xi  n$ vertices in $D_4$ have at least $|D_2|/32$ out-neighbours and at least $|D_2|/32$ in-neighbours in $D_2$;
\item[$(iv)$] for the partition \( (D_1, D_2, D_3, D_4) \), every vertex is $1/600$-acceptable and the number of non $(1 - 20q \xi )$-circular vertices is at most $\xi  n$,
\end{itemize}
 then \( D \) contains a Hamilton cycle.
\end{lemma}

K\"{u}hn, Osthus and Treglown \cite{Kuhn(2010)} first proposed the following definition, {which} was already used implicitly in \cite{Keevash(2009)}. This definition has strong structural properties.

\begin{definition}\emph{(Robust $(\mu,\tau)$-outexpander)} \label{def1}
Let $\mu <1$ and $ \tau <1$ be positive reals. A digraph \(R\) is a \textbf{robust \((\mu, \tau)\)-outexpander} if \(|N^+_R(S)| \geq |S| + \mu |R|\) for all \(S \subseteq V(R)\) with \(\tau |R| \leq |S| \leq (1-\tau)|R|\).
\end{definition}

Lemma \ref{4.4} {states} that if $D$ is not an extremal case, then its reduced digraph is a robust outexpander. While \cite{Keevash(2009)} does not explicitly formulate Lemma \ref{4.4} as a {standalone} result, it is implicitly used in their proofs. We provide a full proof of {Lemma \ref{4.4}} in the Appendix\footnote{Please refer to Lemma B.3 in Appendix} for completeness.

\begin{lemma}\label{4.4}
\cite{Keevash(2009)} Let $ M', n_0 $ be positive numbers and $ \varepsilon, d, \mu $ be positive constants satisfying $
1/n_0 \ll 1/M' \ll \varepsilon \ll d \ll \mu  \ll  1.
$
Let $ D $ be an oriented graph on $ n \geq n_0 $ vertices with $
\delta^0(D) \geq (3/8 - 2d)n$, and let $ W \subseteq V(D) $ be a vertex subset with $ |W| \leq dn $.
Applying the Diregularity lemma to $ D \setminus W $ {on input $\varepsilon$ and $M'$}, we obtain the reduced digraph $ R' $ with parameters $ (\varepsilon, d) $. Suppose $ R $ is a spanning oriented subgraph of $ R' $ with parameters $ (\varepsilon, d) $ satisfying $
\delta^0(R) \geq (3/8 - 6d)|R|.$ If $ D $ is not an extremal case with parameters $(31$, $10^3$, $1/200$, $1-10^3\sqrt{\mu })$, then $ R $ is a robust $ (\mu, 1/3) $-outexpander.
\end{lemma}

The following fact reveals the relationship between the parameters of a robust outexpander.

\begin{fact}\label{fac1}
Let $d, \mu$, and $ \tau $ be positive real numbers with $d\ll \mu\leq \tau \leq 1/100$. Suppose $R$ is an oriented graph satisfying $\delta^0(R)\geq (3/8-6d)|R|$. If $R$ is a robust $(\mu,1/3)$-outexpander, then it is also a robust $(\mu, \tau)$-outexpander.
\end{fact}
\begin{proof}
Let $|R|=k$. By the definition of robust $(\mu,1/3)$-outexpander, we have that $|N^+_R(S)|\geq |S| + \mu k$ for every $S\subseteq V(R)$ satisfying $ k/3 \leq |S| \leq 2k/3$. For a subset $S$ of $V(R)$ satisfying $\tau k\leq |S| \leq k/3$, it is easy to check that
 $$|N^+_R(S)|\geq (3/8-6d)k\geq k/3+k/100\geq |S| + \mu k \text{ (as $d\ll \mu\leq \tau \leq 1/100$)}.$$
 And if $ 2k/3 \leq |S| \leq (1-\tau)k$, then $|S| + |N^-_R(v)| > k$, that is, $S\cap N^-_R(v)\neq \emptyset$ for any $v\in R$. Thus $N^+_R(S) = V(R)$. So $$|N^+_R(S)|=|R|=(1-\tau)k+\tau k \geq |S| + \tau k\geq |S| + \mu k \text{ (as $d\ll \mu\leq \tau \leq 1/100$)}.$$ This implies that $R$ is a robust $(\mu, \tau)$-outexpander, as desired.
\end{proof}

Lemma \ref{4.5} gives a sufficient condition for Hamiltonicity of oriented graphs.

\begin{lemma}\label{4.5}
\cite{Keevash(2009)} Let $M',n_0$ be positive numbers and let $\varepsilon,d,\eta, \nu,\tau$ be positive constants such that $1/n_0\ll 1/M'\ll \varepsilon \ll d\ll \mu \ll \tau \ll \eta <1$. Let $D$ is an oriented graph on $n\geq n_0$ vertices such that $\delta^0(D)\geq 2\eta n$. Let $R'$ be the reduced digraph of $D$ with parameters $(\varepsilon, d)$ and $|R'|\geq M'$. If $R'$ has a spanning oriented subgraph $R$ with $\delta^0(R)\geq \eta |R|$ such that $R$ is a robust $(\mu, \tau)$-outexpander, then $D$ contains a Hamilton cycle.
\end{lemma}

Here, we introduce the pinch vertex operation, which plays a crucial role in our proof.
\begin{definition}[Pinch vertex operation]\label{keydef}
Given an oriented graph $D$, two vertices \(u, v \in V(D)\), and two subsets \(X, Y \subseteq V(D)\): If \(N_X^+(u) \cap N_Y^-(v) \neq \emptyset\), partition \(N_X^+(u) \cap N_Y^-(v)\) into two sets \(N_u, N_v\) of size as equal as possible; Otherwise, set \(N_u = N_v = \emptyset\). Define \(N_1 = N_X^+(u) \setminus N_v\) and \(N_2 = N_Y^-(v) \setminus N_u\). The pinch vertex operation (on \(u, v\) with respect to \(X, Y\)) constructs an oriented graph \(D_{uv}\) as follows: {$V(D_{uv})=(V(D)\setminus \{u,v\})\cup \{x_{u,v}\}$, where $x_{u,v}$ is a new vertex; $A(D_{uv})=A(D-\{u,v\})\cup \{x_{u,v}w\mid w\in N_1\} \cup \{wx_{u,v}\mid w\in N_2\} $.}
\end{definition}

 {Note that any cycle in $D_{uv}$ that includes $x_{u,v}$ corresponds to a $(u,v)$-path in $D$.} Also, this can make the contraction vertex $ x_{u,v}$ retain the out-degree of $u$ in $X$ and the in-degree of $v$ in $y$ as much as possible.

\vspace{10pt}

 {\textbf{Parameters selection:}}
 Throughout the proof process, suppose parameters satisfy
\begin{equation}\label{para1}
1/M' \ll \varepsilon \ll \theta \ll d \ll \mu \ll\{\rho , \tau \}\ll \xi \ll \eta  \ll \min\{\eta _0, k^{-1}q^{-1}\},
\end{equation}
where $\eta _0$ as stated in Lemma \ref{4.3}. For brevity, we denote \(n_0^{(a)}\) as the constant \(n_0\) in Lemma \(a\) or Theorem \(a\). To ensure that the oriented graphs in Theorem \ref{main} is large enough so that the above conclusions can be utilized, we select \(n_0^{(1.1)}\) to satisfy
\begin{equation}\label{para2}
n_0^{(1.1)}\gg \max \{n_0^{(3.8)},n_0^{(4.1)},n_0^{(4.2)},n_0^{(4.3)},n_0^{(4.4)},n_0^{(4.5)} \},
 \end{equation}
 where \(n_0^{(3.8)} = n_0(s,M,\varepsilon)\), in which \(s = q\) and \(M\) is obtained by applying Lemma \ref{lem1} with \(\varepsilon, d, M'\) (as in (\ref{para1})). Additionally, let $n_0:=n_0^{(4.6)}=n_0^{(4.7)}=n_0^{(1.1)}$.

The proof of Theorem \ref{main} is divided into two cases: the extremal case (Lemma \ref{4.6}) and the non-extremal case (Lemma \ref{4.7}). Thus, it suffices to prove Lemmas \ref{4.6}-\ref{4.7}. {In subsequent proofs, given a multidigraph $H$ and an injection $f: V(H)\to V(D)$, let $A(H)=\{u_iv_i \mid i \in [q]\}$ denote its arc set (Note that two arcs may share common vertices here; this notation is solely used to distinguish distinct arcs), and assume $f(v)=v$ for all $v\in V(H)$. Let $\mathcal{N} = \{n_i \geq 4 \mid i \in [q]\}$ be the set of prescribed path lengths. Here, $\mathcal{N}$ consists of $q$ integers satisfying $\sum_{i \in [q]} n_i \leq n - h + q$ (i.e., the total number of vertices in the $H$-subdivision is at most $n$).} So our goal is to find a set $\mathcal{P} = \{P_{i}, i\in [q] \mid P_{i} \text{ is a } (u_i, v_i)\text{-path of length } n_{i}\}$ of $q$ internally disjoint paths. For the purpose of calculation, we assume that $n_1\geq \cdots \geq n_s>\rho n \geq n_{s + 1}\geq \cdots\geq n_q$ for Lemma \ref{4.6}, and assume $n_1\geq \cdots \geq n_s>\theta n \geq n_{s + 1}\geq \cdots\geq n_q$ for Lemma \ref{4.7}, where $\theta \ll d \ll \mu \ll \rho $ in (\ref{para1}).

\subsection{Extremal case}

\begin{lemma}\label{4.6}
Suppose \(H\) is a multi-digraph with \(h\) vertices and \(q\) arcs, and \(D\) is an extremal oriented graph of order \(n \geq n_0\) with parameters $(31$, $10^3$, $1/200$, $1-10^3\sqrt{\mu })$. If $D$ satisfies {\(\delta^0(D)\geq \frac{3n + 3h + 3q - 5}{8} \)}, then \(D\) is arbitrary  \(H\)-linked. Specially, if $H$ is a loop and $D$ satisfies {\(\delta^0(D)\geq \frac{3n -4}{8} \)}, then $ D $ is arbitrary  $ H $-linked.
\end{lemma}

\textbf{Overview of Lemma \ref{4.6}.} {Let \(f:V(H)\to V(D)\) be an injection, and let \(B = f(V(H))\) denote the image of $f$. First, according to the extremal partition of $D$ and the small size of $B$, we readily derive an initial extremal partition of \(D \setminus B\). Applying Lemma \ref{4.2} to refine this partition yields a refined extremal partition \((D_1, D_2, D_3, D_4)\) of \(D \setminus B\) such that \(|D_2| = |D_4| + b\), where $b$ is determined by the neighbour distribution of branch vertices.} {The structure of this partition ensures the existence of all short subdivision paths. Apply Lemma \ref{lemma2} to the remaining subdigraph of $D\setminus B$ --- obtained by removing vertices from these short subdivision paths --- to yield $s$ subgraphs $G_1,\ldots,G_s$, where $s$ denotes the number of required long subdivision paths.} Pinch the branch vertices \( u_i \) and \( v_i \) into a single vertex and added to \( G_i \), followed by adjusting a few vertices to ensure \( |V(G_i) \cap D_2| = |V(G_i) \cap D_4| \) (feasible due to \( |D_2| = |D_4| + b \)) and $|G_i|=n_i$. Finally, we verify that each oriented graph $G_{i}$ satisfies the condition of Lemma \ref{4.3}. This implies the existence of a Hamilton cycle in \( G_i \), thereby yielding a subdivision $(u_i,v_i)$-path of length \( n_i \) in \( D \), which completes the proof.

\begin{proof}\textbf{of Lemma \ref{4.6}.}
{Constants are selected as in \eqref{para1} and \eqref{para2}, and we suppose that $n_1\geq \cdots \geq n_s>\rho n \geq n_{s + 1}\geq \cdots\geq n_q$ holds for this case.} {Let \(f:V(H)\to V(D)\) be an injection, and let \(B = f(V(H))\) denote the image of $f$, so $|B|=h$.} Since $D$ is an extremal case with parameters $(31$, $10^3$, $1/200$, $1-10^3\sqrt{\mu })$ and $n_0\gg h$, it follows that $ D \setminus B $ admits an extremal partition $(D_1^*, D_2^*, D_3^*, D_4^*)$ with parameters $(32$, $10^4$, $1/300$, $1-10^4\sqrt{\mu })$ (please refer to Property A.1 (I) in Appendix for details).

We will utilize Lemma \ref{4.2} to optimize the present extremal partition. For the case that \(H\) is a loop, if \(n_1\leq \rho n\), one can directly use Lemma \ref{lemma3} to find the desired $H$-subdivision; otherwise, apply Lemma \ref{4.2} (ii),  by adjusting the location of at most $ 65\mu n $ vertices from \( (D^*_1\cup B, D^*_2, D^*_3, D^*_4) \) and contracting a collection of disjoint paths \( Q_1, \ldots, Q_r \) in $D$ with \( \left|\bigcup_{i=1}^r V(Q_i)\right| \leq 650\mu n \), there exists an extremal partition $ (D_1, D_2, D_3, D_4) $ of $V(D)$ with parameters $ (10^3, 10^6, 1/400,\ 1-10^6\sqrt{\mu}) $ such that $ |D_2| = |D_4| $. For the case that \(H\) is not merely a loop, to utilize Lemma \ref{4.2} (i), determine the parameter \( b \) as follows. For each pair of vertices $u_i,v_i\in B$, fix two indices $i^+,i^-\in [4]$ such that $i^+\equiv i^- + 1$ or $i^+\equiv  i^- + 2$ (mod 4) and $d^+_{D^*_{i^+ }}(u_i)\geq n/48$, $d^-_{D^*_{i^-}}(v_i)\geq n/48$. The existence of these indices is guaranteed by Lemma \ref{4.1} (ii). Then for each $i\in [q]$, set
$$b(P_{i})=\begin{cases}
           0, & \mbox{if } i^+\in \{1,2\} \mbox{ and } i^-\in \{1,4\}, \mbox{ or } i^+\in \{3,4\} \mbox{ and } i^-\in \{2,3\}, \\
           -1, & \mbox{if } i^+\in \{3,4\} \mbox{ and } i^-\in \{1,4\}, \\
          1, & \mbox{otherwise}.
         \end{cases}$$
 Indeed, $b(P_{i})$ denotes that the $(u_i, v_i)$-path that we construct next  use \(b(P_{i})\) more vertices from \(D^*_2\) than in \(D^*_4\). That is, $b(P_{i})=|(V(P_{i})\cap D^*_2|-|V(P_{i})\cap D^*_4)| $. Let $b=\sum_{i\in [q]}b(P_{i}),$ then $|b|\leq q$. W.l.o.g., assume $b\geq 0$. Apply Lemma \ref{4.2}, by adjusting the location of at most $ 65\mu n $ vertices from \( (D^*_1, D^*_2, D^*_3, D^*_4) \) and contracting a collection of disjoint paths \( Q_1, \ldots, Q_r \) with \( \left|\bigcup_{i=1}^r V(Q_i)\right| \leq 650\mu n \), there exists an extremal partition \( (D_1, D_2, D_3, D_4) \) with parameters $ (10^3, 10^6,  1 /400,1-10^6\sqrt{\mu}) $ such that \( |D_2| - |D_4| = b \). It not hard to see that every pair of vertices $u_i,v_i\in B$ still satisfies $d^+_{D_{i^+ }}(u_i)\geq n/100$, $d^-_{D_{i^-}}(v_i)\geq n/100$. In either case, let $C$ be the set of all contraction vertices that {are} obtained by contracting paths \( Q_1, \ldots, Q_r \), so $|C|\leq 650\mu n$.

We now construct the short paths \( P_{s+1}, \ldots, P_q \) inductively. Let \( S^* \) denote the non-\((1 - 10^6\sqrt{\mu})\)-circular vertices in the partition \((D_1, D_2, D_3, D_4)\), so \( |S^*| \leq \sqrt{\mu}n \). We will assign the vertices in \( S^* \cup C \) to long subdivision paths. In fact, we further require that each short path \( P_i \) (\( s+1 \leq i \leq q \)) has the form \( P_i = u_i u_i^+ \cdots v_i^- v_i \), where:
\begin{itemize}[itemsep=0pt, topsep=0.5pt,parsep=1pt]
\item \( u_i^+ \in N^+_{D_{i^+}}(u_i) \) and \( v_i^- \in N^-_{D_{i^-}}(v_i) \);
\item All internal vertices are \((1 - 10^6\sqrt{\mu})\)-circular;
\item \(  |V(P_i) \cap D_2| - |V(P_i) \cap D_4|  = b(P_i) \).
\end{itemize}
Assume \( P_{q}, \ldots, P_{j+1} \) exist, and let \( W = \bigcup_{i=j+1}^q V(P_i) \) (\( |W| < q\rho n \)). Then we find the $(u_j, v_j)$-path $P_{j}$. Define \( S = S^* \cup C \cup W \) (\( |S| \leq 2q\rho n \)). For \( u_j \) and \( v_j \), a $(1 - 10^6\sqrt{\mu })$-circular vertex $u^+$ is selected from \(N^+_{D_{j^+}\setminus S}(u_j)\), and a $(1 - 10^6\sqrt{\mu })$-circular vertex $v^-$ is selected from \(N^-_{D_{j^-}\setminus S}(v_j)\). Inequalities $|N^+_{D_{j^+}}(u_j)|-|S|\geq n/200$ and $ |N^-_{D_{j^-}}(v_j)|-|S|\geq n/200$ guarantee the existence of $u^+$ and $v^-$. For any two $(1 - 10^6\sqrt{\mu })$-circular vertices \( x \in D_l \), \( y \in D_{l-2} \) (where indices are modulo 4), there is:
   \[
   |N^+_D(y) \cap N^-_D(x) \cap D_{l-1}| - |S| \geq (\frac{1}{4} - 10^3\mu  - 2 \times 10^6\sqrt{\mu})(n-h) - 2q\rho n \stackrel{(\ref{para1},\ref{para2})}> 200\sqrt{\mu}n.
   \]
Together with Lemma \ref{4.1} (i) and the fact that $j^+\equiv j^- + 1$ or $j^+\equiv j^- + 2$ (mod 4), by selecting a subpath of an appropriate length in $D_1$ or $D_3$, we obtain a $(u^+, v^-)$-path \( P' \) of length \( n_j - 2 \); except for possibly containing a 3-length subpath in \( D_1 \) or \( D_3 \), all other vertices of this path are distributed cyclically in the order of \( D_1, D_2, D_3, D_4 \). Define \( P_j = u_j P' v_j \). By the circularity of the path $P'$ and {the previous definition of $ b(P_j) $}, the selection of $ u^+ \in D_{j^+} $ and $ v^- \in D_{j^-} $ enforces:
$
  |V(P_j) \cap D_2| - |V(P_j) \cap D_4|   = b(P_j).
$
This completes the construction of the short paths by induction.

If \( s = 0 \), the result holds; otherwise, \( s \geq 1 \). To construct long subdivision paths, we apply Lemma \ref{lemma2} to partition $ D\setminus (\bigcup_{i=s+1}^q V(P_i)\cup B\cup C) $ into subgraphs $ \{G_i\}_{i=1}^s $ of specified orders with large semi-degree. Then pinch vertices $ u_i $ and $ v_i $ into a vertex $ x_{u_i,v_i} $, then update $G_i = G_i \cup \{x_{u_i,v_i}\}.$ This reduces finding $(u_i, v_i)$-subdivision path to finding Hamilton cycle in $ G_i $. Then, adjust a small number of vertices to make $|G_i| = n_i$ (the vertices of the contracted path are also taken into account). Finally, Lemma \ref{4.3} is applied to each $ G_i $ to get a Hamilton cycle.

Let $D'=D\setminus (\bigcup_{i=s+1}^q V(P_i)\cup B\cup C)$, and let $D'_l=D_l\setminus (\bigcup_{i=s+1}^q V(P_i)\cup C)$ for all $l\in [4]$. When randomly partitioning the remaining oriented graph via {Lemma \ref{lemma2}}, the proof splits into two: $\sum_{i \in [q]} n_i \geq n - 100q\rho^2 n$ or $\sum_{i \in [q]} n_i < n - 100q\rho^2 n$. Now, assume that $\sum_{i \in [q]} n_i \geq n - 100q\rho^2 n$. To apply Lemma \ref{lemma2}, for each $l\in [4]$, let $m^{i}_l=\lfloor (n_i - 1)/4\rfloor$ for all $ i\in [2,s]$, and then let $m^{1}_l= |D'_l| - \sum^s_{i=2}m^{i}_l$. It is not difficult to find that $m^i_l\geq \rho n/5$ since $n_i\geq \rho n$, for all $i \in [2,s]$. Then we bound $m^1_{l}$ to show that $m^{1}_l\geq 20q\rho n$. Since if $n_1\leq 10^5q\rho n$, it follows that $\sum_{i=1}^q n_i\leq 10^5q^2 \rho n< n- 100q\rho^2 n$ (as (\ref{para1}) and (\ref{para2})), which contradicts with $ \sum_{i=1}^{q}n_i\geq n- 100q\rho^2 n$.  Hence, $n_1\geq 10^5q\rho n$, this yields that
\begin{equation}\label{11}
\begin{aligned}
m^{1}_l= |D'_l| - \sum^s_{i=2}m^{i}_l& \geq (1/4-10^3\mu) (n-h)-650\mu n -q\rho n-\sum_{i=2}^s\lfloor\frac{n_{i}-2}{4}\rfloor\\
&\geq n/4-10^4\mu n -2q\rho n-\frac{n-n_1-2q}{4}\stackrel{(\ref{para1})(\ref{para2})}>20q\rho n.
\end{aligned}
\end{equation}
Applying Lemma \ref{lemma2} with $D:=D'$, $\varepsilon :=\varepsilon,\ s:=s,\ M:=4,\{m^{i}_l, i\in [s], l\in [4]\}$, $V_0:=\emptyset$, and $V_l:=D'_l$ for all $l\in [4]$, there is a partition $ (S^1_l,\ldots,S^s_l )$ of $D'_l$ for each $l\in [4]$, satisfying
\begin{itemize}[itemsep=0pt, topsep=0.5pt,parsep=1pt]
\item[$(A1)$] $|S^{i}_l| = m^i_l$, for all $i\in [s] $;
\item[$(A2)$] For each $\sigma \in \{ +, - \}$, if $v\in V(D)$ has $d^{\sigma}_{V_l}(v)\geqslant \varepsilon |V_l|$, then $
|N^{\sigma}_D(v) \cap S^i_l| \geq \frac{ d^{\sigma}_{V_l}(v) |S^i_l|}{|V_l| } - 2sn^{2/3} $ for all $i\in [s] $.
\end{itemize}
{(For the case $\sum_{i \in [q]} n_i < n - 100q\rho^2 n$, let $m^{i}_l=\left\lfloor \frac{n_i - 1}{4}\right\rfloor$ for each $l\in [4]$ and $i\in [s]$, and set $m^{s+1}_l= |D'_l| - \sum_{i=1}^s m^{i}_l$. By Lemma \ref{lemma2}, each $D'_l$ admits a partition $(S^1_l,\ldots,S^{s+1}_l )$ for $l\in [4]$ that satisfies $(A1)$–$(A2)$. For each $l\in [4]$, select $s$ vertices from the subset $S_l^{s+1}$, add them to $S_l^1$, and ensure that $|D_2|=|D_4|+b$. Retain only the subsets $S_l^1,\dots,S_l^{s}$ for subsequent proofs, discarding the remaining vertices in $S_l^{s+1}$. In particular, if $H$ is a loop (i.e., $V(H)=\{u\}$), we partition $D$ into two subgraphs $G_1$ and $G_2$ by applying Lemma \ref{lemma2}. We then place the vertex $u$ (or the contracted vertex corresponding to the path $Q_i$ containing $u$) into $G_1$ such that $|G_1|\geq n_1$, with all remaining vertices discarded. The subsequent proof for this case is identical to that for the case where $\sum_{i \in [q]} n_i \geq n - 100q\rho^2 n$. Hence, we present only the proof for the case  $\sum_{i \in [q]} n_i \geq n - 100q\rho^2 n$.)}

Define $G_{i} = D'[\bigcup_{l=1}^{4} S_l^i] \text{ for all } i \in [2, s],$ and $G_{1} =D'[C\cup  \bigcup_{l=1}^{4} S_l^1].$  For each $i\in [s]$, pinch vertices $u_i,v_i$ with $X=D_{i^+},Y=D_{i^-}$ into a vertex $ x_{u_i,v_i}$, then put $x_{u_i,v_i}$ into $S^i_{t_i}$ to get an oriented graph, still call it $G_i$. The selection of $t_i$ depends on the indices $i^+,i^-$ as previously fixed. Namely,
\begin{itemize}[itemsep=0pt, topsep=0.5pt,parsep=1pt]
\item when $i^+\in \{1,2\}$, if $i^-\in \{1,4\}$ then let $t_i = 1$, and if $i^-\in \{2,3\}$, let $t_i= 4$;
\item when $i^+\in \{3,4\}$, if $i^-\in \{2,3\}$ then let $t_i = 3$, and if $i^-\in \{1,4\}$, let $t_i = 2$.
\end{itemize}
 This leads to \( ||S^{i}_2| - |S^{i}_4||\leq  1\) for all $i\in [2,s]$, then we adjust at most one vertex between $S^{i}_2$ and $S^{1}_2\setminus C$ or between $S^{i}_4$ and $S^{1}_4\setminus C$ to ensure \( |S^{i}_2| = |S^{i}_4|\). The selection of \(t_i\) compensates for the portion of the discrepancy between \(|D_2|\) and \(|D_4|\)  equivalent to $b(P_i)$, together with $|D_2|=|D_4|+b$, we naturally have \( |S^{1}_2| = |S^{1}_4|\) now. Also, by the definition of $1/400$-acceptable, \(x_{u_i,v_i}\) is a $1/400 $-acceptable vertex for the partition $( S_1^i,S_2^i,S_3^i,S_4^i )$ if \(x_{u_i,v_i}\) is put into \(S_{t_i}^{i}\) (these are why we choose such \(t_i\)).  {(When \(H\) is a loop, no vertex pinching or vertex placement operations are required.} Moreover, since we apply Lemma \ref{4.2} (ii), it naturally holds that \(|S_1^2| = |S_1^4|\).)

For each $i\in [2,s]$, it follows from $(A1)$ that $n_i - 4\leq |G_{i}| = 4\lfloor\frac{n_{i} - 1}{4}\rfloor + 1 \leq n_i $. To ensure that $|G_{i}| = n_{i}$, we remove at most 4 vertices from $S^1_{1}\setminus C$ and add them to $S^{i}_1$. Notice that $\sum_{i \in [q]} n_i \geq n - 100q\rho^2 n$ and some disjoint paths may have been contracted. Thus, to get \(|G_1|=n_1\) (the vertices of the contracted path are also taken into account), we remove some redundant vertices from each set \(S_l^1\), $l\in [4]$ with equality as much as possible. And remove at most $100q\rho^2 n $ vertices in total, while keeping \(|S^1_2| = |S^1_4|\). This results in the number of vertices removed from different sets differing by at most two. Then $|G_1|\geq n_1-650\mu n-100q\rho^2 n\geq 50q\rho n$ as (\ref{para1}).


Now, for each $i\in [s]$, we verify that $( S_1^i,S_2^i,S_3^i,S_4^i )$ satisfies the condition of Lemma \ref{4.3} (i)-(iv). It is easy to check that $ |S^i_{l}|=(1/4\pm 4\mu ) |G_i|$ for all $i\in [2,s]$. According to the construction of $ (S^{1}_1,S^1_{2},S^1_{3},S^1_{4})$, $$\left||S^1_{l'}|-|S^1_{l''}|\right|\leq 200\mu n+650\mu n+65\mu n+4s+2<10^4q\mu n\stackrel{(\ref{para1})}\leq \xi |G_1|,$$ for any $l',\ l''\in [4]$. Hence,  $ |S^1_{l}|=(1/4\pm \xi) |G_1|$ for all $l\in [4]$, then ($i$) holds. If $M$ is a subset of $S^i_3$ with order at least $ \xi n_i $, then $M$ contains a subset of $D^*_3\setminus \bigcup_{i=1}^r V(Q_i)$ with order at least $200\sqrt{\mu}n$ as $\xi \gg \sqrt{\mu}$, so $M$ contains a 3-path by Lemma \ref{4.1} (i), then ($ii$) holds. Now, we verify ($iv$). Since $(D_1,D_2,D_3,D_4)$ is an extremal partition with parameters $ (10^3, 10^6,  1 /400,1-10^6\sqrt{\mu}) $, every vertex $v\in V(D\setminus B) $ is $1/400$-acceptable for the partition $(D_1,D_2,D_3,D_4)$. Assume $v$ satisfies $D_1:(D_2)^{>1/400}(D_4)_{>1/400}$, so the number of out-neighbours of $v$ in the set $D'_2$ is at least
$$ |D_2|/400-|\bigcup_{i=s+1}^q V(P_i)\cup C|\geq (1/400-10^3q\rho )|D_2|\geq (1/400-10^3q\rho )|D'_2|.$$
 According to (A2), (\ref{para1}) and (\ref{para2}), the number of out-neighbours of $v$ in each $S^2_i$ ($i\in [2,s]$) is at least $(1/400-10^3q\rho )m^2_i - 2sn^{2/3}-4s \geq  m^2_i/600$ (the number of out-neighbours of $v$ in each $S^2_1$ is at least $(1/400-10^3q\rho )m^2_1 - 2sn^{2/3}-4s-100q\rho^2 n \geq  m^2_1/600$). Similarly, the number of in-neighbours of $v$ in each $S^4_i$ ($i\in [s]$) is at least $ m^4_i/600$. This implies that every vertex in $G_{i}$ is $1/600$-acceptable for $( S_1^i,S_2^i,S_3^i,S_4^i )$. Likewise, (\ref{para1}) yields that $$(1 - 10^6\sqrt{\mu })m^l_i - 2sn^{2/3}-4s-100q\rho^2 n\geq (1 - 20q \xi)m^l_i,$$
thus every $(1-10^6\sqrt{\mu })$-circular vertex $v\in V(D)$ is $(1 - 20q \xi)$-circular for $( S_1^i,S_2^i,S_3^i,S_4^i )$ if $v\in V(G_{i})$. Until now, for $( S_1^i,S_2^i,S_3^i,S_4^i )$, every vertex in $G_{i}$ is $1/600$-acceptable and the number of non $(1 - 20q \xi)$-circular vertices is at most $\sqrt{\mu } n\leq \sqrt{\mu } |G_{i}|/\rho\leq \xi |G_{1}|$.

In order to verify ($iii$) in Lemma \ref{4.3}, {let $L$ be the subset of $D_4$ such that every vertex in $L$ has at least $ n/32$ out-neighbours in $D_3\cup D_4$. It follows that $|L|n/32 \leq a(D_4) + a(D_4, D_3)\leq 10^6\mu n^2$. This implies that $L$ has at most $10^8\mu n $ vertices. Since $|D_1|\leq n/4 +  10^3\mu n$ and $\delta^0(D\setminus B)\geq (3n - 5k-5)/8$, every vertex in $D_4 \setminus L$ has at least $ n/16$ out-neighbours and in-neighbours in $D_2$. Therefore, all but at most $10^8\mu n$ vertices in $D_4$ have at least $|D_2|/32$ out-neighbours and at least $|D_2|/32$ in-neighbours in $D_2$.} Similarily, by (A2), all but at most $10^8\mu n\leq \xi |G_i|$ vertices in $S^i_4$ have at least $|S^i_2|/32$ out-neighbours and at least $|S^i_2|/32$ in-neighbours in $S^i_2$.   In summary, $( S_1^i,S_2^i,S_3^i,S_4^i )$ satisfies the condition of Lemma \ref{4.3}.

Lemma \ref{4.3} yields a Hamilton cycle $C_i$ in $G_i$ which encounters some contraction vertices. Thus there is a new cycle $C'_i = c_1c_2\cdots c_{j - 1}x_{u_i, v_i}c_{j + 1}\cdots c_1$ containing the special vertex $x_{u_i, v_i}$ with length $n_i - 1$ by replacing each contracting vertex $q_j$ by the path $Q_j$ in $C_i$. Then the path $P_i = u_ic_{j + 1}\cdots c_1c_2\cdots c_{j - 1}v_i$ is the desired subdivision path with length $n_i$. In this way, we get all long subdivision paths. This completes the proof of this case.
\end{proof}

\subsection{Non-extremal case}

\begin{lemma}\label{4.7}
Let $ H $ be a multi-digraph with $ h $ vertices and $ q $ arcs. Let $ D $ be an oriented graph of order $ n \geq n_0$ with $\delta^0(D) \geq \frac{3n - 4}{8}$. If $ D $ is not an extremal case with parameters $(31$, $10^3$, $1/200$, $1-10^3\sqrt{\mu })$, then \(D\) is arbitrary  \(H\)-linked.
\end{lemma}

\begin{proof}
{In accordance with the specifications in (\ref{para1}) and (\ref{para2}) for the constants, we further assume the ordering}. We start by constructing iteratively a sequence of subdivision paths \(P_{q}, \ldots, P_{s+1}\) of length at most \(\theta n\) using Lemma~\ref{lemma3}: At iteration $ i $, let $ W = \bigcup_{j = q-i+1}^q P_j $ then $ |W| < q\theta n .$ For $ \widehat{D} = (D \setminus (W \cup B)) \cup \{u_{q - i},v_{q - i}\} $,
$$
\delta^0(\widehat{D}) \geq \tfrac{3n-4}{8} - (h + q\theta n) \stackrel{(\ref{para1})(\ref{para2})}> \tfrac{n}{3} + 6.
$$
 Apply Lemma~\ref{lemma3} with $ l:=|P_{q - i}| < \theta n \stackrel{(\ref{para1})}< |\widehat{D}|/10^{10}$ to get the path $P_{q - i}$ in $\widehat{D}=(D\setminus (W\cup B))\cup \{u_{q - i},v_{q - i}\}$. Therefore, we obtain all short subdivision paths. Update $W := \bigcup^{q}_{i = s + 1} P_{i}$, and then we have $|W|< q\theta n $.

For the construction of long paths, we first apply {Diregularity Lemma} and random partition to divide the remaining oriented graph into $s$ subgraphs, each with a specified order. Through pinch vertex operation, we transform the problem of constructing long subdivision paths into finding Hamilton cycles in these subgraphs. Finally, we complete the proof by using Lemma \ref{4.5}.

 Let $ D^* = D - (W \cup B) $, then
$
\delta^0(D^*) \geq \tfrac{3n-4}{8} - 2q\theta n \geq (\tfrac{3}{8} - 2d)n
$ as (\ref{para1}).
Apply the Diregularity Lemma with inputs $\varepsilon, d$ and $M'$ to obtain a partition $(V_0,V_1,V_2,\ldots,V_M)$ of $V(D^*)$. As $\varepsilon  \ll d$, Lemma \ref{lem2} immediately leads to a reduced oriented graph $R$ of order $M$ with $\delta^0(R)\geq (3/8 - 6d)|R|.$ According to Lemma \ref{4.4}, $R$ is robust $(\mu,1/3)$-outexpander{. Then} $R$ is also a robust $(\mu, \tau)$-outexpander by Fact \ref{fac1}.

Assume that $\sum_{i \in [q]} n_i \geq n - 100q\theta n$. Define for $ 1 \leq l \leq M $:
$$
m_l^i = \lfloor \tfrac{n_i - 1}{M} \rfloor \quad (2 \leq i \leq s), \quad m_l^1 = |V_l| - \sum_{i=2}^s m_l^i.
$$
So $ m^i_l\geq \theta n/2M$ for all $m^i_l$ ($m^1_l>\theta n/2M$ similarly as  (\ref{11})). By Lemma \ref{lemma2} with $D:=D^*$, $\varepsilon:=\varepsilon,\ s:=\varepsilon,\ M:=M,\{m^{i}_l, i\in [s], l\in [M]\}$, $V_l:=V_l$ for all $0\leq l\leq M$, there exists a partition $ (S^1_l,\ldots,S^s_l) $, for each $V_l\ (l\in [M])$, satisfying
\begin{itemize}[itemsep=0pt, topsep=0.5pt,parsep=1pt]
\item[(C1)] $ |S_l^i| = m_l^i $ for all $i\in [s]$;
\item[(C2)] For each $\sigma \in \{ +, - \}$, if $v\in V(D)$ has $d^{\sigma}_{V_l}(v)\geqslant \varepsilon |V_l|$, then $
|N^{\sigma}_D(v) \cap S_l^i| \geq \frac{ d^{\sigma}_{V_l}(v) |S_l^i|}{|V_l| } - 2sn^{2/3}$ for all $i\in [s]$.
\end{itemize}
 {(For the case \(\sum_{i \in [q]} n_i < n - 100q\theta n\), define \(m_l^i = \lfloor \frac{n_i - 1}{M} \rfloor \quad (1 \leq i \leq s),\quad
m_l^{s+1} = |V_l| - \sum_{i=1}^s m_l^i.\) By Lemma \ref{lemma2}, each \(V_i\) is partitioned into \(s+1\) subsets \((S_l^1,\dots,S_l^{s+1})\). For each \(l\in [M]\), we select $s$ vertices from each vertex subset \(S_l^{s+1}\) and add them to the set \(S_l^1\), and $(C2)$ still holds. We then retain only the subsets \(S_l^1,\dots,S_l^{s}\) for subsequent proofs, discarding all remaining vertices in \(S_l^{s+1}\). The subsequent proof for this case is identical to that for the case where \(\sum_{i \in [q]} n_i \geq n - 100q\theta n\). Hence, we present only the proof of the case \(\sum_{i \in [q]} n_i \geq n - 100q\theta n\))}

Initially, set $S^{i}_0: = \emptyset$  for each $i \in [s]$ and set $\widetilde{S}_{i} := S^{i}_0 \cup \bigcup_{l \in [M]} S^{i}_l$. Now we adjust some vertices so that $ |\widetilde{S}_i| = n_i $. Recall that $u_i$ and $v_i$ are endvertices of path $P_i$ of length $n_i$ that we need to construct. For each $i\in [s]$, pinch vertices $u_i,v_i$ with $X=Y=V(D)$ into a vertex $ x_{u_i,v_i}$, then update $S^{i}_0:=S^{i}_0\cup \{x_{u_i,v_i}\}$. Presently, for $i \in [2, s]$, we have $n_i-M \leq |\widetilde{S}_{i}| \leq n_i $. To guarantee that $|\widetilde{S}_{i}| = n_{i} $ for all $i \in [2,s]$, we extract $n_{i}- |\widetilde{S}_{i}| < M$ vertices from $V_0 \cup \widetilde{S}_{1}$ and append them to $S^{i}_0$ for each $i \in [2, s]$. By the setting of $m^i_l$, we have that $|S^{i}_1|=\cdots =|S^{i}_M|$ for all $i \in [2, s]$. We still denote the sets after vertex adjustment by $V_0$ and $\widetilde{S}_i \ (i \in [s])$. 

{At most $sM$ vertices were first removed from $\widetilde{S}_{1}$, yielding $|\widetilde{S}_{i}| = n_{i}$ for each $i\in [2,s]$. To ensure $|S_1^1|=\cdots =|S_M^1|$, we move at most $sM$ vertices from each $S_l^1$ to $S_0^1$. Then let \(S^1_0:=S^1_0\cup V_0\). However, required subdivision may not be Hamiltonian: $\widetilde{S}_{1}$ may contain redundant vertices unused in the subdivision. By the inequality $\sum_{i \in [q]} n_i\geq n - 100q\theta n$, at most $100q\theta n$ vertices are removed from $\widetilde{S}_{1}\setminus \{x_{u_1,v_1}\}$ to eliminate redundancy and achieve $|\widetilde{S}_1|=n_1$. Notably, the number of remaining vertices in each $S^1_l$ ($l\in [M]$) is kept as equal as possible during this elimination step.  Finally, at most one vertex is moved from each $S^1_l$ ($l\in [M]$) to $S^1_0$, ensuring $|S^{1}_1|=\cdots =|S^{1}_M|$.} Hence, $ |S^{i}_0|\leq 1+sM^2+\varepsilon n+M <\sqrt{\varepsilon} |\widetilde{S}_{i}|$ (since $|\widetilde{S}_{i}|=n_i\geq \theta n \gg \varepsilon n$). Until now, we have that for each $i\in [s]$, $|\widetilde{S}_{i}|= n_i$, $|S^{i}_1|=\cdots =|S^{i}_l|$ and $ |S^i_0|\leq \sqrt{\varepsilon} |\widetilde{S}_{i}|$.

To utilize Lemma \ref{4.5}, we assert that $ \delta ^0( \widetilde{S}_{i})\geq 4\eta |\widetilde{S}_{i}|$. By $(C2)$, for every vertex $v \in V(D)$, $i\in [s]$ and $\sigma \in \{ +, - \}$, we derive that:
\begin{align*}
d^{\sigma}_{\widetilde{S}_{i}}(v) \geq \frac{\sum_{l \in [M]} d^{\sigma}_{S^i_l}(v)- sM^2}{2}&\geq \sum_{l \in [M] : d^{\sigma}_{V_l}(v)\geq \varepsilon |V_l|} \left(\frac{ d^{\sigma}_{V_l}(v) |S_l^i|}{|V_l| }  -2sn^{2/3}\right)/2- sM^2   \\
&\geq \frac{ |S_l^i|}{2|V_l|}\cdot \sum_{l \in [M] : d^{\sigma}_{V_l}(v)\geq \varepsilon |V_l|} d^{\sigma}_{V_l}(v)-sMn^{2/3}- sM^2 \\&\geq \frac{ |S_l^i|}{2|V_l|}(3/8 - q\theta-\varepsilon - M\varepsilon)n - 2sMn^{2/3} \stackrel{(\ref{para1})}> 4\eta n_i.
\end{align*}
For each $i \in [s]$, define $R_{i}$ as a digraph with vertex set $V(R)$. An arc {$uv$} belongs to $R_i$ if and only if it is an arc of $R$. Evidently, $R_i$ and $R$ are isomorphic. Therefore, $R_i$ is a robust $(\mu, \tau)$-outexpander, and $\delta^0(R_i) = \delta^0(R ) \geq (3/8 - 6d)|R|$, as required. Also, $R_i$ is an oriented graph.
\begin{claim}
For all \(i \in [s]\), the oriented graph \(R_i\) is a reduced oriented graph of \(\widetilde{S}_{i}\) corresponding to the partition \((S^0_{i},S^1_{i}, \ldots,S^M_{i})\), {with parameters \(\sqrt{\varepsilon}\) and \(5d/6\) in place of \(\varepsilon\) and $d$ respectively.}
\end{claim}
\begin{proof}
Note that for every arc $l'l'' \in A(R)$, the pair $(V_{l'}, V_{l''})$ is an $\varepsilon$-regular pair with density at least $d$. A straightforward calculation follows that for each $i \in [s]$,
$$|S^{l'}_{i}| = \frac{n_i - |S^0_i|}{M} \geq \frac{(1 - \sqrt{\varepsilon})n_i}{M} \geq \frac{(1 - \sqrt{\varepsilon})\theta n}{M}\stackrel{(\ref{para1})} > \theta |V_{l'}|/2 > \varepsilon |V_{l'}|.$$
Similarly, $|S^{l''}_{i}| > \theta |V_{l'}|/2 $. The $\varepsilon$-regularity of $(V_{l'}, V_{l''})$ implies that $|d(S^{l'}_{i}, S^{l''}_{i}) - d(V_{l'}, V_{l''})| < \varepsilon$. Hence, $d(S^{l'}_{i}, S^{l''}_{i}) \geq d - \varepsilon \geq 5d/6$. For any two vertex sets $U'\subseteq S^{l'}_{i}$ of order $\sqrt{\varepsilon}|S^{l'}_{i}| $ and $U''\subseteq S^{l''}_{i}$ of order $ \sqrt{\varepsilon}|S^{l''}_{i}|$, because $ \sqrt{\varepsilon}|S^{l'}_{i}|> \sqrt{\varepsilon}\theta |V_{l'}|/2 >\varepsilon |V_{l'}|,$ {we have that} $|d(U', U'') - d(V_{l'}, V_{l''})| < \varepsilon$. This implies that $|d(U', U'') - d(S^{l'}_{i}, S^{l''}_{i})| < 2\varepsilon<\sqrt{\varepsilon}$, then by definition, $(S^{l'}_{i}, S^{l''}_{i})$ is $\sqrt{\varepsilon}$-regular pair. In light of the construction of $R$, if $(V_{l'}, V_{l''})$ is $\varepsilon$-regular with density at as least $d$, then $(S^{l'}_{i}, S^{l''}_{i})$ is $\sqrt{\varepsilon}$-regular pair with density at least $5d/6$. This leads to the conclusion that $R_{i}$ is the reduced oriented graph of $\widetilde{S}_{i}$ corresponding to the partition $(S^0_{i},S^1_{i}, \ldots,S^M_{i})$, {with parameters \(\sqrt{\varepsilon}\) and \(5d/6\) in place of \(\varepsilon\) and $d$ respectively,} as desired.
\end{proof}

Applying Lemma \ref{4.5} with $D := \widetilde{S}_i$ and reduced oriented graph $R_i$, the oriented graph $\widetilde{S}_i$ contains a Hamilton cycle $C_i$. Assume that $C_i = c_1c_2 \cdots c_{j - 1}x_{u_i,v_i}c_{j + 1} \cdots c_1$. Then the path $P_i = u_ic_{j + 1} \cdots c_1c_2 \cdots c_{j - 1}v_i$ is the desired subdivision path with length $n_i$. Similarly, we can obtain all the remaining long subdivision paths, which completes the proof.
\end{proof}

\section*{Acknowledgements}
{We are grateful to the reviewers for the careful readings and suggestions that improve the presentation.}


\begin{thebibliography}{99}
\setlength{\itemsep}{0pt} 
\setlength{\parsep}{0pt} 

\bibitem{Alon(2004)}
N. Alon and A. Shapira, \emph{Testing subgraphs in directed graphs}, J. Comput. Syst. Sci. \textbf{69} (2004) 354--382.

\bibitem{Alon(1996)}
N. Alon and E. Fischer, \emph{2-factors in dense graphs}, Discrete Math. \textbf{152} (1996) 13--23.

\bibitem{Berge(1985)}
 C. Berge, \emph{Graphs}, North-Holland, Amsterdam, (1985) Second revised edition of part 1 of the 1973 English version.

\bibitem{Wang(2024)}
 Y. Cheng, Z. Wang and J. Yan, \emph{A Dirac-type theorem for arbitrary Hamiltonian $H$-linked digraphs}, arxiv: 2401.17475


\bibitem{chva}
 V. Chv\'{a}tal, \emph{The tail of the hypergeometric distribution}, Discrete Math. \textbf{25} (1979) 285--287.

 \bibitem{Dirac(1952)}
  G.A. Dirac, \emph{Some theorems on abstract graphs}, Proc. Lond. Math. Soc. \textbf{2} (1952) 69--81.



\bibitem{Ferrara (2012)}
M. Ferrara, R. Gould, M. Jacobson, F. Pfender, J. Powell and T. Whalen, \emph{New Ore-type conditions for $H$-linked graphs}, J. Graph Theory \textbf{71} (2012) 69--77.

\bibitem{Ferrara (2006)}
M. Ferrara, R. Gould, G. Tansey and T. Whalen, \emph{On $H$-linked graphs}, Graphs Comb. \textbf{22} (2006) 217--224


\bibitem{Ferrara (2013)}
M. Ferrara, M. Jacobson and F. Pfender, \emph{Degree conditions for $H$-linked digraphs}, Combin. Probab. Comput. \textbf{22} (2013) 684--699.






\bibitem{Gho(1960)}
 A. Ghouila-Houri, \emph{Une condition suffisante d'existence d'un circuit hamiltonien}, C. R. Math. Acad. Sci. Paris \textbf{25} (1960) 495--497.

\bibitem{Gould(2006)}
R.J. Gould, A. Kostochka and G. Yu,\emph{On minimum degree implying that a graph is $H$-linked}, SIAM J. Discrete Math. \textbf{20} (2006) 829--840.


\bibitem{Keevash(2009)}
P. Keevash, D. K\"{u}hn and D. Osthus, \emph{An exact minimum degree condition for Hamilton cycles in oriented graphs}, J. Lond. Math.
Soc. \textbf{79} (2009) 144--166.

\bibitem{sudakov(2009)}
P. Keevash and B. Sudakov, \emph{Triangle packings and 1-factors in oriented graphs}, J. Combin. Theory Ser. B \textbf{99} (2009) 709--727.


\bibitem{Kelly(2009)}
L. Kelly, D. K\"{u}hn and D. Osthus, \emph{Cycles of given length in oriented graphs}, J. Combin. Theory Ser. B \textbf{100} (2010) 251--264.


\bibitem{kosto(2005)}
A. Kostochka and G. Yu, \emph{An extremal problem for $H$-linked graphs}, J. Graph Theory \textbf{50} (2005) 321--339.

\bibitem{kostoc(2008)}
A. Kostochka and G. Yu, \emph{Ore-type degree conditions for a graph to be $H$-linked}, J. Graph Theory \textbf{58} (2008) 14--26.

\bibitem{kostoc(20082)}
A. Kostochka and G. Yu, \emph{Minimum degree conditions for $H$-linked graphs},  Discrete Appl. Math. \textbf{156} (2008) 1542--1548.



\bibitem{Kuhn(2008)}
 D. K\"{u}hn, D. Osthus and A. Young, \emph{$k$-ordered Hamilton cycles in digraphs}, J. Combin. Theory  Ser. B \textbf{98} (2008) 1165--1180.


\bibitem{Kuhn(2010)}
D. K\"{u}hn, D. Osthus and A. Treglown, \emph{Hamiltonian degree sequences in digraphs}, J. Combin. Theory Ser. B \textbf{100} (2010) 367--380.




\bibitem{Lo(2024)}
A. Lo, V. Patel, and M.A. Yıldız, \emph{Cycle Partitions in Dense Regular Digraphs and Oriented Graphs},  {arXiv:2309.11677v3}.

\bibitem{wang(2014)}
H. Wang, \emph{Disjoint cycles with prescribed lengths and independent edges in graphs}, J. Korean Math. Soc. \textbf{51} (2014) 919--940.
\end{thebibliography}
\end{document}


\reduceabovedisplayskip
\reducebelowdisplayskip
\title{Appendix of Semi-Degree Condition for Arbitrary $H$-Linked Oriented Graphs\thanks{The author's work is supported by National Natural Science Foundation of China (No.12071260)}}

\author{Jia Zhou, Jin Yan\thanks{Corresponding author. E-mail adress: yanj@sdu.edu.cn}  \unskip\\[2mm]
School of Mathematics, Shandong University, Jinan 250100, China}

\date{}
\maketitle

\appendix

\section{Appendix: Additional calculations}

\begin{property}
Let $ h$ be a positive integer and $\mu \ll 1/10^{10}$ be a real number. There is an integer $n_0:=n_0(\mu, h)$ such that the following holds. Suppose $D$ is an extremal case of order $ n \geq n_0 $ with parameters $(31$, $10^3$, $1/200$, $1-10^3\sqrt{\mu })$. Let $ B\subseteq V(D) $ be a subset of order $h$. Then the following statements hold.
\begin{itemize}[itemsep=0pt, topsep=0.5pt,parsep=1pt]
\item[$(I)$] $ D \setminus B $ admits an extremal partition $(D_1^*, D_2^*, D_3^*, D_4^*)$ with parameters $(32$, $10^4$, $1/300$, $1-10^4\sqrt{\mu })$.
\item[$(II)$] By relocating at most $ 65\mu n $ vertices from $ (D^*_1, D^*_2, D^*_3, D^*_4) $, there exists an extremal partition $ (D_1, D_2, D_3, D_4)  $ with parameters $ (100, 10^5, 1/350, 1 - 10^5\sqrt{\mu}) $. Then contract a collection of disjoint paths $ Q_1, \ldots, Q_r $ with $ \left|\bigcup_{i=1}^r V(Q_i)\right| \leq 650\mu n $, there is an extremal partition $(D'_1, D'_2, D'_3, D'_4) $ with parameters $ (10^3, 10^6, 1/400, 1 - 10^6\sqrt{\mu}) $.
\end{itemize}
\end{property}

\begin{proof}
For (I), by the assumption, there is an extremal partition $ (D^0_1, D^0_2, D^0_3, D^0_4) $ of $V(D)$ with parameters $(31$, $10^3$, $1/200$, $1-10^3\sqrt{\mu })$ satisfying:
\begin{itemize}[itemsep=0pt, topsep=0.5pt,parsep=1pt]
\item[$(i)$] $|D^0_i|=(1/4\pm 31\mu)n$, for $i\in [4]$.
\item[$(ii)$] $a(D^0_i, D^*_{i+1}) > (1-10^3\mu)n^2/16$, for $i\in [4]$ $(mod \  4)$; $a(D^0_i)>(1/2- 10^3\mu)n^2/16$, for $i\in \{1,3\}$;  $a(D^0_i, D^0_j) > (1/2- 10^3\mu)n^2/16$, for $i,j\in \{2,4\}$ with $i\neq j$; $a(D^0_4) + a(D^0_4, D^0_3)\leq 10^3\mu n^2$;
\item[$(iii)$] Every vertex of $D$ is \(1/200\)-acceptable, and the number of vertices that are not \(1-10^3\sqrt{\mu })\)-circular is at most \(\sqrt{\mu} n\) in \(D\).
\end{itemize} 
Since $|B|=h$, let $n_0$ be a positive integer such that $\mu n_0>h$, there is 
\begin{itemize}[itemsep=0pt, topsep=0.5pt,parsep=1pt]
\item[$(i)$] $(1/4- 32\mu)n\leq (1/4- 31\mu)n-h\leq |D^0_i\setminus B|\leq (1/4+ 31\mu)n$, for $i\in [4]$.
\item[$(ii)$] $a(D^0_i\setminus B, D^0_{i+1}\setminus B) > (1-10^3\mu)n^2/16-hn>(1-10^4\mu)n^2/16$, for $i\in [4]$ $(mod \  4)$; $a(D^0_i\setminus B)>(1/2- 10^3\mu)n^2/16-hn>(1/2-10^4\mu)n^2/16$, for $i\in \{1,3\}$;  $a(D^0_i\setminus B, D^0_j\setminus B) > (1/2- 10^3\mu)n^2/16-hn>(1/2-10^4\mu)n^2/16$, for $i,j\in \{2,4\}$ with $i\neq j$; $a(D^0_4\setminus B) + a(D^0_4\setminus B, D^0_3\setminus B)\leq 10^3\mu n^2+hn\leq 10^4\mu n^2$.
\end{itemize}
For every vertex $v\in V(D)\setminus B$, $v$ is \(1/200\)-acceptable for $ (D^0_1, D^0_2, D^0_3, D^0_4) $, then the acceptablity of $v$ for $ (D^0_1\setminus B, D^0_2\setminus B, D^0_3\setminus B, D^0_4\setminus B) $ is at least  $(|D^0_i|/200 -h)/|D^0_i\setminus B|\geq \frac{|D^0_i\setminus B|}{300|D^0_i\setminus B|}=1/300$. Also, for every vertex $v\in V(D)\setminus B$,  if $v$ is \((1-10^3\sqrt{\mu })\)-circular for $ (D^0_1, D^0_2, D^0_3, D^0_4) $, then the circularity of $v$ for $ (D^0_1\setminus B, D^0_2\setminus B, D^0_3\setminus B, D^0_4\setminus B) $ is at least $\frac{ (1-10^3\sqrt{\mu })|D^0_i| -h}{|D^0_i\setminus B|}\geq (1-10^4\sqrt{\mu })$. Thus, let $D^*_i=D^0_i\setminus B$ for all $i\in [4]$. This completes the proof of (I).

\textbf{Proof of (II):} Since $ D \setminus B $ admits an extremal partition $ (D^*_1, D^*_2, D^*_3, D^*_4) $ with parameters $(32$, $10^4$, $1/300$, $1-10^4\sqrt{\mu })$, it follows the following conclusion:
\begin{itemize}[itemsep=0pt, topsep=0.5pt,parsep=1pt]
\item[$(i)$] $|D^*_i|=(1/4\pm 32\mu)n$, for $i\in [4]$.
\item[$(ii)$] $a(D^*_i, D^*_{i+1}) > (1-10^4\mu)n^2/16$, for $i\in [4]$ $(mod \  4)$; $a(D^*_i)>(1/2- 10^4\mu)n^2/16$, for $i\in \{1,3\}$;  $a(D^*_i, D^*_j) > (1/2- 10^4\mu)n^2/16$, for $i,j\in \{2,4\}$ with $i\neq j$; $a(D^*_4) + a(D^*_4, D^*_3)\leq 10^4\mu n^2$;
\item[$(iii)$] Every vertex of $D\setminus B$ is \(1/300\)-acceptable, and the number of vertices that are not \(1-10^4\sqrt{\mu })\)-circular is at most \(\sqrt{\mu} n\) in \(D\setminus B\).
\end{itemize} 
After relocating at most $ 65\mu n $ vertices,  the adjusted partition satisfies $ |D_i| = \left( \frac{1}{4} \pm 100\mu \right)n$ for $  i \in [4]. $ Also, the arc modification is bounded as follows: At most \(64\mu n\) vertices are moved, and the number of arcs that are destroyed or newly added is $
  \Delta_1 = 64\mu n \cdot n = 64\mu n^2.$ Hence, 
  \begin{itemize}[itemsep=0pt, topsep=0.5pt,parsep=1pt]
\item $
a(D_i, D_{i+1}) > \left(1 - 10^4\mu\right)\frac{n^2}{16} - 65\mu n^2>\left(1 - 10^5\mu\right)\frac{n^2}{16},
$ for $i\in [4]$ (mod  4);
\item $a(D_i)>(1/2- 10^4\mu)n^2/16 - 65\mu n^2>\left(1/2 - 10^5\mu\right)\frac{n^2}{16}$, for $i\in \{1,3\}$;  
\item $a(D_i, D_j) > (1/2- 10^4\mu)n^2/16- 65\mu n^2>\left(1 - 10^5\mu\right)\frac{n^2}{16}$, for $i,j\in \{2,4\}$ with $i\neq j$;
\item  $a(D_4) + a(D_4, D_3)\leq 10^4\mu n^2+65\mu n^2<10^5 \mu n$.
\end{itemize}    
Moving vertices may disrupt the original adjacency structure, but the affected quantity is extremely small, at most \(65\mu n\). Therefore, the acceptability of all vertices is still greater than \(|D_i^*|/300 - 65\mu n > |D_i|/350\), and the first item in condition (iii) holds. Similarly, the vertices that were \((1 - 10^4\mu)\)-circular previously have a circularity of at least \((1 - 10^4\mu)|D_{i}^*|-65\mu n>(1 - 10^5\mu)|D_{i}|\) now. It should be noted that during our proof process, the moved vertices maintain $1/400$-acceptable, and the vertices obtained from the contracted paths maintain $(1 - 10^5\mu)$-circular. Namely, $ (D_1, D_2, D_3, D_4)  $ is an extremal partition with parameters $ (100, 10^5, 1/350, 1 - 10^5\sqrt{\mu}) $ as desired.

After contracting paths that cover at most $ \leq 650\mu n $ vertices,  the adjusted partition satisfies $ |D'_i| = \left( \frac{1}{4} \pm 10^3\mu \right)n$ for $  i \in [4]. $ Also, the arc modification is bounded as follows: The number of arcs that are destroyed or newly added is $
  \Delta_1 = 650\mu n \cdot n = 650\mu n^2.$ Hence, 
  \begin{itemize}[itemsep=0pt, topsep=0.5pt,parsep=1pt]
\item $
a(D_i, D_{i+1}) > \left(1 - 10^5\mu\right)\frac{n^2}{16} - 650\mu n^2>\left(1 - 10^6\mu\right)\frac{n^2}{16},
$ for $i\in [4]$ (mod  4);
\item $a(D_i)>(1/2- 10^5\mu)n^2/16 - 650\mu n^2>\left(1/2 - 10^6\mu\right)\frac{n^2}{16}$, for $i\in \{1,3\}$;  
\item $a(D_i, D_j) > (1/2- 10^5\mu)n^2/16- 650\mu n^2>\left(1 - 10^6\mu\right)\frac{n^2}{16}$, for $i,j\in \{2,4\}$ with $i\neq j$;
\item  $a(D_4) + a(D_4, D_3)\leq 10^5\mu n^2+650\mu n^2<10^6 \mu n$.
\end{itemize}    
Contracting paths may disrupt the original adjacency structure, but the affected quantity is extremely small, at most \(650\mu n\). Therefore, the acceptability of all vertices is still greater than \(|D_i^*|/350 - 650\mu n > |D_i|/400\), and the first item in condition (iii) holds. Similarly, the vertices that were \((1 - 10^5\mu)\)-circular previously have a circularity of at least \((1 - 10^5\mu)|D_{i}^*|-650\mu n>(1 - 10^6\mu)|D_{i}|\) now. It should be noted that during our proof process, the moved vertices maintain $1/400$-acceptable, and the vertices obtained from the contracted paths maintain $(1 - 10^6\mu)$-circular. Namely, $ (D'_1, D'_2, D'_3, D'_4)  $ is an extremal partition with parameters $ (10^3, 10^6, 1/400, 1 - 10^6\sqrt{\mu}) $ as desired. 
\end{proof}

\begin{lemma}[Lemma 4.2 (ii)]
Let $ h=1$ and $\mu \ll 1$ be a real number. There is an integer $n_0:=n_0(\mu, h)$ such that the following holds. Let $ D $ be an oriented graph of order $ n \geq n_0 $. Suppose $ B \subseteq V(D) $ is a subset of order $h$. If $ D \setminus B $ admits an extremal partition $ (D^*_1, D^*_2, D^*_3, D^*_4) $ with parameters $(32$, $10^4$, $1/300$, $1-10^4\sqrt{\mu })$. If $ \delta^0(D) \geq \frac{3n  - 4}{8} $, then there exists an extremal partition $ (D_1, D_2, D_3, D_4) $ with parameters $ (10^3, 10^6, 1/400,\ 1-10^6\sqrt{\mu}) $ such that $ |D_2| = |D_4| $. This partition is obtained by adjusting the location of at most $ 65\mu n $ vertices from $ (D^*_1\cup B, D^*_2, D^*_3, D^*_4) $ and contracting a collection of disjoint paths $ Q_1, \ldots, Q_r $ in $ D $ with $ \left|\bigcup_{i=1}^r V(Q_i)\right| \leq 650\mu n $.
\end{lemma}

\begin{proof}
In this case, let $b=0$ and the vertex in $B$ can be both adjusted and contracted. Additionally, some computations also differ from those in (i). First, we assign the vertex in \(B\) to the partition $ (D^*_1, D^*_2, D^*_3, D^*_4) $ of \(D\setminus B\) to obtain a partition of \(V(D)\).  By Lemma 4.1 (ii), for the vertex $u\in B$, there exist two indices $i, j\in [4]$ with $i\equiv j + 1$ or $i\equiv j + 2$ (mod 4) such that $d^+_{D^*_i\setminus S}(u)\geq n/48$ and $d^-_{D^*_j\setminus S}(u)\geq n/48$. Then, \(u\) can be placed into a set \(D^*_k\) such that \(u\) is a \(1/100\)-acceptable vertex, and the acceptability of the remaining vertices remains unchanged. Finally, update the partition \((D^*_1, D^*_2, D^*_3, D^*_4)\), that is, it is a partition of \(V(D)\). 

Provided that $|D^*_2|\neq |D^*_4|$, since otherwise, the conclusion holds trivially. Our aim is to rearrange the positions of a few vertices to reduce the difference between \(|D^*_2|\) and \(|D^*_4|\), while ensuring that the newly obtained partition remains an extremal partition with a slight increase in its parameters. We classify vertices as follows with $\gamma = 1/400$: if $|D^*_2|> |D^*_4|$, we call a vertex \textbf{good} if it is $\gamma$-acceptable, and it belongs to $D^*_4$ or has one of the properties
$D^*_1:(D^*_2)_{<\gamma}(D^*_3)_{<\gamma}$, $D^*_2:(D^*_1)^{<\gamma}(D^*_2)^{<\gamma}_{<\gamma}(D^*_3)_{<\gamma}$, $D^*_3 :(D^*_1)^{<\gamma}(D^*_2)^{<\gamma}$. In this case, $D^*_4$ is called the \textbf{good set} and $D^*_2$ is called the \textbf{bad set}. And if $|D^*_2|< |D^*_4|$, we call a vertex \textbf{good} if it is $\gamma$-acceptable, and it belongs to $D^*_2$ or has one of the properties
$D^*_1:(D^*_4)^{<\gamma}(D^*_3)^{<\gamma}$, $D^*_3 :(D^*_1)_{<\gamma}(D^*_4)_{<\gamma}$, $D^*_4:(D^*_1)_{<\gamma}(D^*_4)^{<\gamma}_{<\gamma}(D^*_3)^{<\gamma}$. In this case, $D^*_4$ is called the \textbf{bad set} and $D^*_2$ is called the \textbf{good set}. In either case, we call a vertex \textbf{bad} if it is not good.

 The following shows that the goal can be achieved by relocating the bad vertices.

\begin{claim}\label{claim5}
All bad vertices in the bad set can become \(\gamma\)-acceptable by arranging them in \(D^*_1\cup D^*_3\).
\end{claim}

\begin{proof}
W.l.o.g., assume $D^*_2$ is the bad set and $v$ is a bad vertex in $D^*_2$.
Then $v$ satisfies at least one of properties $D^*_2:(D^*_1)^{>\gamma}$, $D^*_2:(D^*_2)^{>\gamma}$, $D^*_2:(D^*_2)_{>\gamma}$ and $D^*_2:(D^*_3)_{>\gamma}$. By the $1/300$-acceptability of $v$, we get that $v$ has properties $D^*_2:(D^*_1)_{>1/300}$ or $D^*_2:(D^*_4)_{>1/300}$ and $D^*_2:(D^*_4)^{>1/300}$ or $D^*_2:(D^*_3)^{>1/300}$. Hence, if $v$ has properties $D^*_2:(D^*_1)^{>\gamma}$ or $D^*_2:(D^*_2)^{>\gamma}$, then we move the vertex $v$ to the part $D^*_1$. This means that $v$ has property $D^*_1:(D^*_1)^{>\gamma}(D^*_1)_{>1/300}$ or $D^*_1:(D^*_1)^{>\gamma}(D^*_4)_{>1/300}$ or $D^*_1:(D^*_2)^{>\gamma}(D^*_4)_{>1/300}$ or $D^*_1:(D^*_2)^{>\gamma}(D^*_1)_{>1/300}$. By the definition of $\gamma$-acceptable, $v$ is $\gamma$-acceptable after moving. And if $v$ has properties $D^*_2:(D^*_2)_{>\gamma}$ or $D^*_2:(D^*_3)_{>\gamma}$, then $v$ becomes $\gamma$-acceptable after moving the vertex $v$ into $D^*_3$. The case where \( D^*_4 \) is the bad set is analogous.
\end{proof}

\begin{claim}\label{claim4}
We can arrange every bad vertex in $D^*_1\cup D^*_3$ to be a good vertex by moving it to the good set.
\end{claim}

\begin{proof}
Here, we present the proof for the case where \( D^*_4 \) is the good set, as a similar analysis can be applied if \( D^*_2 \) is the good set. If $v$ is a bad vertex in $D^*_1$, we can obtain that $D^*_1:(D^*_2)_{>\gamma} $ or $D^*_1:(D^*_3)_{>\gamma} $. Since $v$ is a $1/300$-acceptable vertex, it follows that $D^*_1:(D^*_1)^{>1/300} $ or $D^*_1:(D^*_2)^{>1/300} $. It is easy to check that we move the vertex $v$ to $D^*_4$ then $v$ becomes $\gamma$-acceptable. Furthermore, it is good. The same argument applies for the case $v\in D^*_3$.
\end{proof}

 Define $r'=\left||D^*_2| - |D^*_4| \right|$, then $r'\leq 64\mu n$. According to the definition of the good set,  removing vertices from the bad set or adding vertices to the good set will reduce the gap between $|D^*_2|$ and $|D^*_4| $. Together with Claims \ref{claim5}-\ref{claim4}, by redistributing at most $r'$ the bad vertices in the union of the bad set and $D^*_1 \cup D^*_3$, we obtain a new partition $(D_1,D_2,D_3,D_4)$ and all vertices in $V(D)$ are good. By Definition 4.2, it is easy to check that $(D_1,D_2,D_3,D_4)$ is a extremal partition with parameters $ (100, 10^5, 1/350,1-10^5\sqrt{\mu}) $ {(for a detailed calculation, see Property A.1 (II) in Appendix)}. Then the conclusion follows if $|D_2| = |D_4|$. Therefore, suppose that $|D_2| \neq |D_4|$.

Now, define $r=\left||D_2| - |D_4|\right|$, then $r\leq 65\mu n$. Denote $A(X, Y)$ to be the set of arcs from $X$ to $Y$ and $a(X,Y)=|A(X,Y)|$. Indeed, some arcs can also be extended into disjoint paths. Furthermore, contracting these paths would reduce the difference between $|D_2|$ and $|D_4|$. We construct an oriented graph $H^{\prime}$ with $V(H^{\prime})= V(D\setminus B)$ as follows:

\indent $ \bullet$ $A(H^{\prime})=A(D_2, D_1)\cup A(D_2)\cup A(D_3, D_1)\cup A(D_3, D_2)$, when $|D_2|> |D_4|$;

\indent $ \bullet$ $A(H^{\prime})=A(D_1, D_3)\cup A(D_1, D_4)\cup A(D_4, D_3)\cup A(D_4)$, when $|D_2|< |D_4|$.

\noindent In either case, choose $M$ to be a maximum matching in $H^{\prime}$. Let $L_i = V(M)\cap D_i$ for each $i\in [4]$.  Suppose that $a(M)\leq  r$.
Then the $r$ arcs $e_i$ of $M$ can be extended to $r$ disjoint paths $Q_i, i\in [r]$, where $Q_i$ contains exactly one arc $e_i$ in $M$, starting and ending at $(1 - 10^5\sqrt{\mu})$-circular vertices of the bad set and having the shortest possible length. This yields the result that each $Q_i$ has length at most 8, and uses two vertices from the bad set and the good set respectively. Thus $|\bigcup_{i\in [r]}V(Q_i)|< 10r<650 \mu n$. Further, after contracting these paths, there is a new partition $(D'_1,D'_2,D'_3,D'_4)$ such that $|D'_2|=|D'_4|$. By taking $\mu$ small enough and $n_0$ large enough, it is easy to check that $(D'_1,D'_2,D'_3,D'_4)$ is an extremal partition with parameters $ (10^3, 10^6, 1/400,1-10^6\sqrt{\mu}) $ {(for a detailed calculation, see Property A.1 (II) in Appendix)}. Then these disjoint paths $Q_1,\ldots,Q_r$ and the partition $(D'_1,D'_2,D'_3,D'_4)$ as desired. Now assume $a(M)>  r$. Next, we show that this contradicts the assumption $\delta ^0(D)\geq \frac{3n-4}{8}$. Now, we divide the proof into two cases: $|D_2| > |D_4|$ or $|D_2| <|D_4| $.

\textbf{Case 2.1: $|D_2| >|D_4|$}

The maximality of $M$, $\sum_{i\in [4]}|L_i|\leq 2a(M)$, and the definition of the good vertex yield that
\begin{equation*}\label{9}
\begin{aligned}
a(D_3, D_1)&\leq a(L_3, D_1) + a(D_3, L_1 )\leq (|L_3| + |L_1|)\gamma (n/4 + 32\mu n)\leq a(M)n/300.
\end{aligned}
\end{equation*}
Similarly, we also can obtain that $a(D_2,D_1)\leq a(L_2, D_1) + a(D_2, L_1) < a(M)n/300,$ $a(D_3,D_2)\leq a(L_3, D_2) + a(D_3, L_2 ) < a(M)n/300,$ and $a(D_2)\leq 2|L_2|\cdot \gamma (n/4+32\mu n)<a(M)n/150.$
Hence, it follows that
$$\sum_{v\in D_1} d_{D}^-(v)\leq \frac{|D_1|\cdot (|D_1-1|)}{2}+ a(M)n/150+ |D_1|\cdot |D_4|,$$
$$\sum_{v\in D_2} d_{D}(v)\leq (|D_1|+|D_3|)\cdot |D_2| + 2a(M)n/150+|D_2|\cdot |D_4|,$$
$$\sum_{v\in D_3} d_{D}^+(v)\leq \frac{|D_3|\cdot (|D_3-1|)}{2}+ a(M)n/150+ |D_3|\cdot |D_4|.$$

 By pigeonhole principle, there are three vertices $u\in D_1, v\in D_2, w\in D_3$ such that
$$d_{D}^-(u)\leq \frac{|D_1|-1}{2}+|D_4|+\frac{a(M)n}{150 |D_1|}<\frac{|D_1|-1}{2}+|D_4|+\frac{a(M)}{30},$$
$$d_{D}(v)<|D_1|+|D_3|+|D_4|+ \frac{a(M)}{15},\ \text{ and } d_{D\setminus B}^+(w)<\frac{|D_3|-1}{2} + |D_4| + \frac{a(M)}{30}.$$
Together with $|D_1|+|D_2|+|D_3|+|D_4|= n$, it yields that
\begin{equation*}
\begin{aligned}
4\times \frac{3n-4}{8}&\leq d_{D}^-(u) + d_{D}^+(w) + d_{D}(v) <\frac{3}{2}(n-|D_2| + |D_4|) + \frac{2a(M)}{15}-1.
\end{aligned}
\end{equation*}
 Thus $|D_2| - |D_4|=r>a(M)$ gives that $41a(M)/30<-1/2.$ This is impossible.

\textbf{Case 2.2: $|D_2|<|D_4|$.}

The similar calculations as above yields that $a(D_1, D_3)< a(M)n/300, $ $a(D_1,D_4)< a(M)n/300,$ $a(D_4,D_3) < a(M)n/300,$ and $a(D_4) <a(M)n/150.$ By Pigeonhole principle, there are three vertices $u\in D_1, v\in D_4, w\in D_3$ such that
$$d_{D}^+(u)<\frac{|D_1|-1}{2}+|D_2|+\frac{a(M)}{30},$$
$$d_{D}(v)<|D_1|+|D_3|+|D_2|+ \frac{a(M)}{15},\ \text{ and }d_{D}^-(w)<\frac{|D_3|-1}{2} + |D_2| + \frac{a(M)}{30}. $$
Together $|D_4|-|D_2|=r>a(M)$ with $|D_1|+|D_2|+|D_3|+|D_4|= n$, it yields that
\begin{equation*}
\begin{aligned}
4\times \frac{3n-4}{8}&\leq d_{D}^-(u) + d_{D}^+(w) + d_{D}(v) <\frac{3}{2}(n+|D_2| - |D_4|) + \frac{2a(M)}{15}-1.
\end{aligned}
\end{equation*}
This gives $|D_4|-|D_2| < 2/3+4 a(M) / 45.$ On the other hand, we previously observed that $r=|D_4|-|D_2| \geqslant a(M)+1$. Combining this gives $41a(M)/45< 0$, which is impossible. 
\end{proof}

\begin{lemma}[Lemma 4.2 (i)]\label{4.2}
Let $ h,q,b$ be positive integers with $b \leq q$ and $\mu \ll 1$ be a real number. There is an integer $n_0:=n_0(\mu, q)$ such that the following holds. Let $ D $ be an oriented graph of order $ n \geq n_0 $. Suppose $ B \subseteq V(D) $ is a subset of order $h$. If $ D \setminus B $ admits an extremal partition $ (D^*_1, D^*_2, D^*_3, D^*_4) $ with parameters $(32$, $10^4$, $1/300$, $1-10^4\sqrt{\mu })$, and $ \delta^0(D) \geq \frac{3n + 3h + 3q - 5}{8} $, then there exists an extremal partition $ (D_1, D_2, D_3, D_4) $ of $V(D)\setminus B$ with parameters $ (10^3, 10^6, 1/400,\ 1-10^6\sqrt{\mu}) $ such that {$ |D_4| - |D_2| = b $}. This partition is obtained by adjusting the location of at most $ 65\mu n $ vertices from $ (D^*_1, D^*_2, D^*_3, D^*_4) $ and contracting a collection of disjoint paths $ Q_1, \ldots, Q_r $ in $ D \setminus B $ with $ \left|\bigcup_{i=1}^r V(Q_i)\right| \leq 650\mu n $.
\end{lemma}

\begin{proof}\textbf{of Lemma \ref{4.2}.}
 The lemma is trivial if {\(|D^*_4| - |D^*_2| = b\)}. Consequently, we assume that \(|D^*_4| - |D^*_2| > b\) or \(|D^*_4| - |D^*_2| < b\). Our aim is to rearrange the positions of a few vertices to reduce the difference between \(|D^*_4|\) and \(|D^*_2| + b\), while ensuring that the newly obtained partition remains extremal with a slight change in its parameters. 
 
Given a partition $(O_1,O_2,O_3,O_4)$ of $V(D)$, we classify vertices as follows with $\gamma = 1/400$: if $|O_4| - |O_2| > b$, we call a vertex \textbf{good} if it is $\gamma$-acceptable, and it belongs to $O_2$ or has one of the properties
$O_1:(O_4)^{<\gamma}(O_3)^{<\gamma}$, $O_3 :(O_1)_{<\gamma}(O_4)_{<\gamma}$, $O_4:(O_1)_{<\gamma}(O_4)^{<\gamma}_{<\gamma}(O_3)^{<\gamma}$. In this case, $O_4$ is called the \textbf{bad set} and $O_2$ is called the \textbf{good set}. And if $|O_4| - |O_2| < b$, we call a vertex \textbf{good} if it is $\gamma$-acceptable, and it belongs to $O_4$ or has one of the properties
$O_1:(O_2)_{<\gamma}(O_3)_{<\gamma}$, $O_2:(O_1)^{<\gamma}(O_2)^{<\gamma}_{<\gamma}(O_3)_{<\gamma}$, $O_3 :(O_1)^{<\gamma}(O_2)^{<\gamma}$. In this case, $O_4$ is called the \textbf{good set} and $O_2$ is called the \textbf{bad set}. In either case, we call a vertex \textbf{bad} if it is not good.

 The following shows that the goal can be achieved by relocating the bad vertices. Now we consider the bad vertices in the initial partition $(D^*_1, D^*_2, D^*_3, D^*_4)$ of $V(D)$.

\begin{claim}\label{claim5}
Each bad vertex in the bad set can become \(\gamma\)-acceptable by arranging it in \(D^*_1\cup D^*_3\).
\end{claim}

\begin{proof}
W.l.o.g., assume $D^*_2$ is the bad set and $v$ is a bad vertex in $D^*_2$.
Then $v$ satisfies at least one of properties $D^*_2:(D^*_1)^{>\gamma}$, $D^*_2:(D^*_2)^{>\gamma}$, $D^*_2:(D^*_2)_{>\gamma}$ and $D^*_2:(D^*_3)_{>\gamma}$. By the $1/300$-acceptability of $v$, we get that $v$ has properties $D^*_2:(D^*_1)_{>1/300}$ or $D^*_2:(D^*_4)_{>1/300}$ and $D^*_2:(D^*_4)^{>1/300}$ or $D^*_2:(D^*_3)^{>1/300}$. Hence, if $v$ has properties $D^*_2:(D^*_1)^{>\gamma}$ or $D^*_2:(D^*_2)^{>\gamma}$, then we move the vertex $v$ to the part $D^*_1$. This means that $v$ has property $D^*_1:(D^*_1)^{>\gamma}(D^*_1)_{>1/300}$ or $D^*_1:(D^*_1)^{>\gamma}(D^*_4)_{>1/300}$ or $D^*_1:(D^*_2)^{>\gamma}(D^*_4)_{>1/300}$ or $D^*_1:(D^*_2)^{>\gamma}(D^*_1)_{>1/300}$. By the definition of $\gamma$-acceptable, $v$ is $\gamma$-acceptable after moving. And if $v$ has properties $D^*_2:(D^*_2)_{>\gamma}$ or $D^*_2:(D^*_3)_{>\gamma}$, then $v$ becomes $\gamma$-acceptable after moving the vertex $v$ into $D^*_3$. The case where \( D^*_4 \) is the bad set is analogous.
\end{proof}

\begin{claim}\label{claim4}
We can arrange every bad vertex in $D^*_1\cup D^*_3$ to be a good vertex by moving it to the good set.
\end{claim}

\begin{proof}
Here, we present the proof for the case where \( D^*_4 \) is the good set, as a similar analysis can be applied if \( D^*_2 \) is the good set. If $v$ is a bad vertex in $D^*_1$, we can obtain that $D^*_1:(D^*_2)_{>\gamma} $ or $D^*_1:(D^*_3)_{>\gamma} $. Since $v$ is a $1/300$-acceptable vertex, it follows that $D^*_1:(D^*_1)^{>1/300} $ or $D^*_1:(D^*_2)^{>1/300} $. It is easy to check that we move the vertex $v$ to $D^*_4$ then $v$ becomes $\gamma$-acceptable. Furthermore, it is good. The same argument applies for the case $v\in D^*_3$.
\end{proof}

Let $n_0$ be an integer such that $\mu n_0\geq q $. Define $r^*=\left||D^*_4| - (|D^*_2| + b)\right|$, then $r^*\leq 64\mu n+q \leq 65\mu n$ as $b\leq q$ and $\mu n\geq \mu n_0\geq hq$. {According to the definition of a good set, removing vertices from a bad set or adding vertices to a good set reduces the gap between \(|D^*_4|\) and \(|D^*_2|+b\). Specifically, each application of either Claim \ref{claim5} or Claim \ref{claim4} decreases the gap between \(|D^*_4|\) and \(|D^*_2|+b\) by 1. Thus, if the number of bad vertices in the union of the bad set and \(D^*_1\cup D^*_3\) is at least \(r^*\), then applying Claims \ref{claim5}–\ref{claim4} a total of \(r^*\) times—equivalently, redistributing exactly \(r^*\) bad vertices in the union—yields a new partition \((D_1,D_2,D_3,D_4)\) such that \(|D_4|=|D_2|+b\). It is straightforward to verify that this partition is extremal with parameters \((100, 10^5, 1/350,1-10^5\sqrt{\mu})\)\footnote{For detailed calculations, refer to Property A.1 (II) in the Appendix}, as desired. We therefore assume that the number of bad vertices in the union of the bad set and \(D^*_1\cup D^*_3\) is at most \(r^*\). Redistributing all bad vertices in this union via the application of Claims \ref{claim5}–\ref{claim4} produces a new extremal partition \((D_1,D_2,D_3,D_4)\) with parameters \((100, 10^5, 1/350,1-10^5\sqrt{\mu})\)\footnote{For detailed calculations, refer to Property A.1 (II) in the Appendix}, and every vertex in \(D\setminus B\) is good. Furthermore, we assume that $|D_4|\neq |D_2|+b$, as the desired conclusion then follows immediately.}

Now, define $r=\left||D_4| - (|D_2| + b)\right|$, then $r\leq 65\mu n$. {Next, we claim that there exist $r$ disjoint paths such that contracting these paths reduces the gap between \(|D_4|\) and \(|D_2|+b\) by $r$, where each path is obtained by extending a `special' arc. To this end,} we construct an oriented graph $H^{\prime}$ with $V(H^{\prime})= V(D\setminus B)$ as follows:

\indent $ \bullet$ $A(H^{\prime})=A(D_1, D_3)\cup A(D_1, D_4)\cup A(D_4, D_3)\cup A(D_4)$, when $|D_4|- |D_2|> b$;

\indent $ \bullet$ $A(H^{\prime})=A(D_2, D_1)\cup A(D_2)\cup A(D_3, D_1)\cup A(D_3, D_2)$, when $|D_4|- |D_2|< b$.

\begin{figure}[htbp]  
    \centering  
    \begin{minipage}[t]{0.43\textwidth}
        \centering
        \includegraphics[width=\linewidth]{5.1.png}  
    \end{minipage}
    \hspace{0.1\textwidth}  
    \begin{minipage}[t]{0.43\textwidth}
        \centering
        \includegraphics[width=\linewidth]{5.2.png}  
    \end{minipage}
    
    \caption{{Illustration of \(H'\): Solid lines denote arcs in \(H'\) (if such arcs exist in $D$), while dashed lines denote arcs that lie in the digraph $D$ but not in \(H'\); the paths formed by the concatenation of solid and dashed lines are exactly the paths \(\{Q'_i \mid i\in [r]\}\) we seek.}}
    \label{tu5}  
\end{figure}
\noindent In either case, choose $M$ to be a maximum matching in $H^{\prime}$. {Then we consider the case $ a(M)\leq  r$ and $a(M)>  r$ separately.} Suppose that $a(M)\geq  r$. {Hereafter, the bad set and good set referred to are defined with respect to the partition $(D_1,D_2,D_3,D_4)$. Since every vertex in $\bigcup_{i=1}^4 D_i$ is $1/350$-acceptable, it is readily verified that there exist $r$ disjoint paths $\{Q'_i \mid i\in[r]\}$), each containing exactly one arc in $M$ and with endvertices lying in the bad set (see Fig. \ref{tu5}). Subject to this, we choose each path $Q'_i$ to be minimal, and wherever feasible, select its endvertices to be $(1-10^5\sqrt{\mu})$-circular vertices. It should be noted that it may happen that not all endvertices of $Q'_i$ are $(1-10^5\sqrt{\mu})$-circular vertices; when this occurs, we extend $Q'_i$ to a minimal path $Q_i$ in a manner that winds around $D_1,D_2,D_3,D_4$, such that both endvertices of $Q_i$ are $(1-10^5\sqrt{\mu})$-circular vertices in the bad set. This construction implies that each path $Q_i$ has length at most 8, using exactly two more vertices from the bad set than from the good set.} Thus $|\bigcup_{i\in [r]}V(Q_i)|< 10r<650 \mu n$. Further, after contracting these paths, there is a new partition $(D'_1,D'_2,D'_3,D'_4)$ such that $|D'_4|=|D'_2| + b$. By taking $\mu$ small enough and $n_0$ large enough, it is easy to check that $(D'_1,D'_2,D'_3,D'_4)$ is extremal with parameters $ (10^3, 10^6, 1/400,1-10^6\sqrt{\mu}) $\footnote{for a detailed calculation, see Property A.1 (II) in Appendix}. Then these disjoint paths $Q_1,\ldots,Q_r$ and the partition $(D'_1,D'_2,D'_3,D'_4)$ as desired. 

Now assume $a(M)<  r$. Next, we show that this contradicts the assumption $\delta ^0(D)\geq \frac{3n + 3h + 3q - 5}{8}$. Let $L_i = V(M)\cap D_i$ for each $i\in [4]$. We divide the proof into two cases: $|D_4| - |D_2| > b$ or $|D_4| - |D_2| < b$.

\textbf{Case 2.1: $|D_4| - |D_2| > b$}

The maximality of $M$, $\sum_{i\in [4]}|L_i|\leq 2a(M)$, and the definition of the good vertex yield that
\begin{equation*}\label{9}
\begin{aligned}
a( D_1,D_3)&\leq a(D_1,L_3 ) + a( L_1,D_3 )\leq (|L_3| + |L_1|)\gamma (n/4 + 32\mu n)\leq a(M)n/300.
\end{aligned}
\end{equation*}
Similarly, we also can obtain that $a(D_1,D_4)\leq a(D_1,L_4 ) + a(L_1, D_4 ) < a(M)n/300,$ $a(D_4,D_3)\leq a(D_4,L_3 ) + a(L_4,D_3  ) < a(M)n/300,$ and $a(D_4)\leq 2|L_4|\cdot \gamma (n/4+32\mu n)<a(M)n/150.$
Hence, it follows that
$$\sum_{v\in D_1} d_{D\setminus B}^+(v)\leq \frac{|D_1|\cdot (|D_1-1|)}{2}+ a(M)n/150+ |D_1|\cdot |D_2|,$$
$$\sum_{v\in D_4} d_{D\setminus B}(v)\leq (|D_1|+|D_3|)\cdot |D_4| + 2a(M)n/150+|D_2|\cdot |D_4|,$$
$$\sum_{v\in D_3} d_{D\setminus B}^-(v)\leq \frac{|D_3|\cdot (|D_3-1|)}{2}+ a(M)n/150+ |D_3|\cdot |D_2|.$$

 By pigeonhole principle, there are three vertices $u\in D_1, v\in D_2, w\in D_3$ such that
$$d_{D\setminus B}^+(u)\leq \frac{|D_1|-1}{2}+|D_2|+\frac{a(M)n}{150 |D_1|}<\frac{|D_1|-1}{2}+|D_2|+\frac{a(M)}{30},$$
$$d_{D\setminus B}(v)<|D_1|+|D_3|+|D_2|+ \frac{a(M)}{15},\ \text{ and } d_{D\setminus B}^-(w)<\frac{|D_3|-1}{2} + |D_2| + \frac{a(M)}{30}.$$
Together $|D_4| - |D_2|-b=r>a(M)$ with $|D_1|+|D_2|+|D_3|+|D_4|= n-h$, it yields that
\begin{equation}\label{2}
\begin{aligned}
4\times \frac{3n+3h+3q-5}{8}-3h&\leq d_{D\setminus B}^+(u) + d_{D\setminus B}^-(w) + d_{D\setminus B}(v) \\
&<\frac{3}{2}(n-h+|D_2| - |D_4|)+ \frac{2a(M)}{15} -1.
\end{aligned}
\end{equation}
 Thus (\ref{2}) gives that $a(M)+b< |D_4|-|D_2|<2a/15-3q/2+3/2$, so $a(M)<0$. This is impossible.

\textbf{Case 2.2: $0\leq |D_4|- |D_2|< b$.}

The similar calculations as above yields that $a(D_1, D_3)< a(M)n/300, $ $a(D_1,D_4)< a(M)n/300,$ $a(D_4,D_3) < a(M)n/300,$ and $a(D_4) <a(M)n/150.$ By Pigeonhole principle, there are three vertices $u\in D_1, v\in D_4, w\in D_3$ such that
$$d_{D\setminus B}^-(u)<\frac{|D_1|-1}{2}+|D_4|+\frac{a(M)}{30},$$
$$d_{D\setminus B}(v)<|D_1|+|D_3|+|D_4|+ \frac{a(M)}{15},\ \text{ and }d_{D\setminus B}^+(w)<\frac{|D_3|-1}{2} + |D_4| + \frac{a(M)}{30}. $$
Together $|D_2|+b-|D_4|=r>a(M)$ with $|D_1|+|D_2|+|D_3|+|D_4|= n-h$, it yields that
\begin{equation*}
\begin{aligned}
4\times \frac{3n+3h+3q-5}{8}-3h&\leq d_{D\setminus B}^-(u) + d_{D\setminus B}^+(w) + d_{D\setminus B}(v) \\
&<\frac{3}{2}(n-h+|D_4| - |D_2|) + \frac{2a(M)}{15}-1.
\end{aligned}
\end{equation*}
This gives $|D_2|-|D_4| < 1-q+4 a(M) / 45.$ On the other hand, we previously observed that $r=|D_2| +b-|D_4| \geqslant a(M)+1$. Combining this gives $41a(M)/45< b-q\leq 0$ (as $b\leq q$), which is impossible. This completes the proof of $(i)$.
\end{proof}

\section{Appendix: Additional Proofs}

In the appendix section, for verification purposes, we strictly adhere to the notation in [10]. Specifically, the oriented graph is denoted by \( G \), and the partition of \( V(G) \) is represented as \((A, B, C, D)\). They also use the notation \(A:=P(1), B:=P(2), C:=P(3), D:=P(4)\).

\subsection{Appendix 1: Proof of Lemma 4.3}

\begin{lemma}[Lemma 4.3]\label{A.1}
There exist a real number \( \eta_0 < 1 \) and an integer $n_0$ such that for any $0< \xi  \ll \eta_0$, the following statement holds. If an oriented graph \( G \) of order \( n \geq {n_0} \) has a partition \( (A, B, C, D) \) such that
\begin{itemize}[itemsep=0pt, topsep=0.5pt,parsep=1pt]
\item[(i)] \( |B| = |D| \), and $|A|=|B|=|C|=|D|=(1/4 \pm \xi  ) |G|$ for all $i\in [4]$;
\item[(ii)] if $M$ is a subset of $A$ or $C$ with order at least $ 10^3\xi  n $, then $M$ contains a 3-path;
\item[(iii)] all but at most $800\xi  n$ vertices in $D$ have at least $|B|/32$ out-neighbours and at least $|B|/32$ in-neighbours in $D_2$;
\item[(iv)] for the partition \( (A, B, C, D) \), every vertex is $1 /600$-acceptable and the number of non-$(1 - 20q \xi )$-circular vertices is at most $\xi  n$,
\end{itemize}
 then \( G \) contains a Hamilton cycle.
\end{lemma}

The proof of Lemma \ref{A.1} needs the following Blow-up Lemma of Koml\'{o}s, S\'{a}rk\"{o}zy and Szemer\'{e}di \cite{Komlos(1997)}.
\begin{lemma}\label{a.2}
\cite{Komlos(1997)} \textbf{\emph{(}Blow-up Lemma\emph{)}} Given a graph $F$ on $[l]$ and positive reals $d', \Delta$, there is a positive real $\eta_0 =\eta_0(d', \Delta, k)$ such that the following holds for all positive numbers $l_1,\ldots , l_k$ and all $0 <\eta \leq \eta_0$. Let $F'$ be the graph obtained from $F$ by replacing each vertex $i\in F$ with a set $V_i$ of $l_i$ new vertices and joining all vertices in $V_i$ to all vertices in $V_j$ whenever $ij$ is an edge of $F$. Let $G'$ be a spanning subgraph of $F'$ such that for every edge $ij\in F$ the graph $(V_i, V_j)_{G'}$ is $(\eta, d')$-super-regular. Then $G'$ contains a copy of every subgraph $H$ of $F'$ with $\Delta(H)\leq \Delta$.
\end{lemma}

\textbf{Notation.} We will also call an edge \textbf{acceptable} if it is one of the type edge from $A$ to $B$, from $B$ to $C$, from $C$ to $D$, from $D$ to $A$, in $A$, in $C$ (so, for example, an edge from $B$ to $A$ is not). The paths $P$ have initial and final vertices in the same class and will be \textbf{acceptable}, meaning that every edge on $P$ is acceptable. Note that each such acceptable path $P$ must be \textbf{$B D$-balanced}, meaning that if we delete the initial vertex of $P$ we are left with a path that meets $B$ and $D$ the same number of times. This may be seen from the observations that visits of $P$ to $B \cup D$ alternate between $B$ and $D$, and if the path is in $A$ and then leaves, it must visit $B$ and $D$ an equal number of times before returning to $A$ (and similarly for $C$ ). Note that if we have $|B|=|D|$ and contract a $B D$-balanced path, then the resulting digraph will still satisfy $|B|=|D|$. The 'moreover' part of the following claim is used later in the proof to turn a graph with $|B|=|D|+1$ into one with $|B|=|D|$ under certain circumstances.%
\begin{proof}\textbf{of Lemma \ref{A.1}.}
We will use the 'standard' version of the Blow-up lemma (Lemma \ref{a.2}) to prove the assertion. For this, the idea is to find suitable paths which together contain all the {non-$(1 - 20q \xi )$-circular} vertices. We will contract these paths into vertices so that the resulting oriented graph $G_{1}$ consists entirely of {$(1 - 30q \xi )$-circular} vertices. Then we will find suitable paths whose contraction results in an oriented graph $G_{2}$ that satisfies $|A|=|B|=|C|=|D|$ and all of whose vertices are {$(1 - 30q \xi )$-circular}. We can then apply the Blow-up lemma to its underlying graph to find a directed Hamilton cycle in $G_{2}$ which 'winds around' $A, B, C, D$. The choice of our paths will then imply that this Hamilton cycle corresponds to one in $G$.

Let $v_{1}, \ldots, v_{t}$ be the vertices that are {$1 /600$-acceptable} but not {$(1 - 20q \xi )$-circular}. For each $v_{i}$, choose a {$(1 - 20q \xi )$-circular} out-neighbour $v_{i}^{+}$ and a {$(1 - 20q \xi )$-circular} in-neighbour $v_{i}^{-}$ so that all of these vertices are distinct, and so that the edges $v_{i} v_{i}^{+}$ and $v_{i}^{-} v_{i}$ are acceptable. Note that this can be done since {$t\leq \xi  n$}. Let $P_{i}^{\prime}$ be a path of length at most 3 starting at $v_{i}^{+}$ and ending at a {$(1 - 20q \xi )$-circular} vertex that lies in the same class as $v_{i}^{-}$, and where the successive vertices lie in successive classes, that is, the successor of a vertex $x \in V(P) \cap P(i)$ lies in $P(i+1)$. (Therefore if, for example, $v_{i}$ has the first of the {$1/600$-acceptable} properties of a vertex in $A$, then we can choose $v_{i}^{+} \in B, v_{i}^{-} \in D$ and so $P_{i}^{\prime}$ would have its final vertex in $D$. Also, if $v_{i}^{-}$ and $v_{i}^{+}$ lie in the same class then $P_{i}^{\prime}$ consists of the single vertex $v_{i}^{+}$.) Again, the paths $P_{i}^{\prime}$ can be chosen to be disjoint. Let $P_{i}:=v_{i}^{-} v_{i} v_{i}^{+} P_{i}^{\prime}$. Then the $P_{i}$ are acceptable and thus $B D$-balanced. Let $G_{1}$ be the oriented graph obtained from $G$ by contracting the paths $P_{i}$. Then every vertex of $G_{1}$ is $(1 - 30q \xi )$-circular. Moreover, the sets $A, B, C, D$ in $G_{1}$ still satisfy $|B|=|D|$, and we still have that the sizes of the other pairs of sets differ by at most {$4\xi  n$}.

Now suppose that $|A|<|C|$ and let $s:=|C|-|A|$. Greedily find a path $P_{C}$ of the form
$$
\underbrace{C C D A B \ldots C C D A B}_{s \text { times }} C
$$
consisting entirely of cyclic vertices. Therefore $P_{C}$ starts with an edge between two {$(1 - 30q \xi )$-circular} vertices in $C$. (This can be done as (ii).) Then the path $P_{C}$ uses one {$(1 - 30q \xi )$-circular} vertex in $D$, one in $A$ and so on. Let $G_{2}$ be the digraph obtained by contracting $P_{C}$. Then in $G_{2}$, we have $|A|=|C|$ and $|B|=|D|$. If $|A|>|C|$ we can achieve equality in the similar way by contracting a path $P_{A}$ of the form $A A B C D \ldots A A B C D A$. Note that since {$s\leq 4\xi  n$}, all vertices of $G_{2}$ are $(1 - 40q \xi )$-circular. Now suppose that in $G_{2}$ we have $|B|>|A|$. Let $s:=|B|-|A|$. This time, we greedily find a path $P_{B}$ of the form
$$\underbrace{B D A B C D B C D A \ldots B D A B C D B C D A}_{s \text { times }} B$$
consisting of {$(1 - 40q \xi )$-circular} vertices. The statement (iii) ensure that such a path exists. Therefore $P_{B}$ starts in such a {$(1 - 40q \xi )$-circular} vertex in $B$ and then uses {$(1 - 40q \xi )$-circular} vertices in $D, A, B$ and $C$. Then move to such a {$(1 - 40q \xi )$-circular} vertex in $D$ and use {$(1 - 40q \xi )$-circular} vertices in $B$, $C, D$ and $A$ etc. Note that $P_{B}$ is $B D$-balanced. By contracting $P_{B}$, we obtain an oriented graph (which we still call $G_{2}$ ) with $|A|=|B|=|C|=|D|$ and all of whose vertices are {$(1 - 50q \xi )$-circular}. Finally, suppose that in $G_{2}$ we have $|B|<|A|$. In this case we can equalize the sets by contracting two paths $P_{A}$ and $P_{C}$ as above.

Thus we have arranged that $|A|=|B|=|C|=|D|$ in $G_{2}$. Let $G_{2}^{\prime}$ be the underlying graph corresponding to the set of edges oriented from $P(i)$ to $P(i+1)$, for $1 \leqslant i \leqslant 4$. We chose $\eta$ such that {$q\xi  \ll \eta \leq \eta _0(1,2,4)$, where $\eta _0(1,2,4) $ is stated in Lemma \ref{a.2}}. Since all vertices of $G_{2}$ are {$(1 - 50q \xi )$-circular}, each pair $(P(i), P(i+1))$ is {$(\eta, 1)$}-super-regular in $G_{2}^{\prime}$. Also, $G_{2}^{\prime}$ is simple; that is, there are no multiple edges. Let $F^{\prime}$ be the 4-partite graph with vertex classes $A=P(1), B=P(2), C=P(3), D=P(4)$, where the 4 bipartite graphs induced by $(P(i), P(i+1))$ are all complete. Clearly $F^{\prime}$ has a Hamilton cycle, so we can apply Lemma 5 with $k=4, \Delta=2$ to find a Hamilton cycle $C_{\text {Ham }}$ in $G_{2}^{\prime}$.
\end{proof}

\subsection{Appendix 2: Proof of Lemma 4.5}
\begin{lemma}[Lemma 4.5]\label{A.3}
Let $ M' $, $ n_0 $ be positive numbers and $ \varepsilon, d, \mu $ positive constants satisfying
\begin{equation}\label{e2}
1/n_0 \ll 1/M' \ll \varepsilon \ll d \ll \mu  \ll  1.
\end{equation}
Let $ W \subseteq V(G) $ be a vertex subset with $ |W| \leq dn $, and $ G $ an oriented graph on $ n \geq n_0 $ vertices satisfying $
\delta^0(G) \geq (3/8 - 2d)n.$
Applying the Diregularity lemma to $ G \setminus W $ with $ M' $ as the minimal cluster number, we obtain the reduced digraph $ R' $ with parameters $ (\varepsilon, d) $. Suppose $ R $ is a spanning oriented subgraph of $ R' $ with parameters $ (\varepsilon, d) $ satisfying $
\delta^0(R) \geq (3/8 - 6d)|R|.$ If $ G $ is not an extremal case with parameters $(31$, $10^3$, $1/200$, $1-10^3\sqrt{\mu })$, then $ R $ is a robust $ (\mu, 1/3) $-outexpander.
\end{lemma}


The proof of Lemma \ref{A.3} needs the following lemma.
\begin{lemma}\label{a.4}
\cite{Keevash(2009)} For every $\varepsilon\in (0, 1)$, there exist numbers $M' = M'(\varepsilon)$ and $n_0 = n_0(\varepsilon)$ such that the following holds. Let $d\in [0, 1]$ with $\varepsilon \leq d/2$. Let $G$ be an oriented graph of order $n\geq n_0$ and let $R'$ be the reduced digraph with parameters $(\varepsilon, d)$ obtained by applying the Diregularity lemma to $G$ with $M'$ as the lower bound on the number of clusters. Then $R'$ has a spanning oriented subgraph $R$ such that

$(i)$ $\delta^0(R)\geq (\delta^0(G)/|G|- (3\varepsilon + d))|R|$;

$(ii)$ for all disjoint sets $S, T \subset V(R)$ with $a_G\left(S^*, T^*\right) \geqslant 3 d n^2$ we have $a_R(S, T)>d|R|^2$, where $S^*=\bigcup_{i \in S} V_i$ and $T^*=\cup_{i \in T} V_i$;

$(iii)$ for every set $S \subset V(R)$ with $a_G\left(S^*\right) \geqslant 3 d n^2$ we have $a_R(S)>d|R|^2$, where $S^*=\cup_{i \in S} V_i$.
\end{lemma}

\begin{proof}\textbf{of Lemma B.3.}
Apply the Diregularity lemma to $G\setminus W$ with $M'$ as the lower bound on the number of clusters, there is a partition $(V_0,V_1,\ldots,V_k)$ of $V(G\setminus W)$. Let $G$ and $R$ be oriented graphs as stated in the lemma. Let $|R|=k$, then
\begin{equation}\label{e1}
\delta^0(R)\geq (3/8-3d)k.
\end{equation} The lemma is trivial if $R$ is a robust $(\mu, 1/3)$-outexpander. Hence, assume that $R$ is not a robust $(\mu, 1/3)$-outexpander. Then we will first show that \(G\) has an extremal partition $(A,B,C,D)$ with parameters $(32$, $10^3$, $1/200$, $1-9000\sqrt{\mu })$.  Let
\[
A_R:=S\cap N_R^+(S),\ B_R:=N_R^+(S)\setminus S,\ C_R:=[k]\setminus(S\cup N_R^+(S)),\ D_R:=S\setminus N_R^+(S).
\]
Define \(A:=\cup_{i\in A_R}V_i\cup V_0
\cup W\), \(B:=\cup_{i\in B_R}V_i$, $C:=\cup_{i\in C_R}V_i$, and $D:=\cup_{i\in D_R}V_i\). By definition we have \(e_R(A_R,C_R)=\)
\(e_R(A_R,D_R)=e_R(D_R,C_R)=e_R(D_R)=0\). Since \(R\) has parameters \((\varepsilon,d)\), Lemma \ref{a.4} (ii)-(iii) imply that we have
\begin{equation}\label{A2}
e(A,C),e(A,D),e(D,C),e(D)<3dn^2\stackrel{(\ref{e2})}<10^3\mu n^2.
\end{equation}


\begin{claim}\label{C.1}
\( |A|,|B|,|C|,|D|=(1/4\pm 31\mu)n\).
\end{claim}
\begin{proof}
 First we show that \(A_R, B_R, C_R\) and \(D_R\) are non-empty. Since the average value of
\(\vert N_R^+(x)\cap S\vert\) over all \(x\in S\) is less than \(\vert S\vert/2\), we have
\[
\vert B_R\vert>\delta^+(R)-\vert S\vert/2\stackrel{(\ref{e1})}{>}(3/8 - 2d)k - k/3>k/30.
\]
Also \(\vert D_R\vert=\vert B_R\vert+\vert S\vert-\vert N_R^+(S)\vert>k/30 - \mu k>0\). Since \(e_R(D_R)=0\), for any \(x\in D_R\) we
have \(\vert N_R(x)\vert\leq\vert A_R\vert+\vert B_R\vert+\vert C_R\vert=\vert N_R^+(S)\vert+\vert C_R\vert\). Thus
\[
\vert C_R\vert\stackrel{(\ref{e1})}{>}2(3/8 - 2d)k-\vert N_R^+(S)\vert>2(3/8 - 2d)k-(2/3 + \mu)k>0
\]
and also \(\vert A_R\vert=\vert C_R\vert+\vert N_R^+(S)\vert+\vert S\vert - k>2(3/8 - 2d)k+\vert S\vert - k>0\).

Choose a vertex \(u_R\in D_R\) whose degree in \(R\) is minimal, a vertex \(v_R\in A_R\) whose outdegree
in \(R\) is minimal and a vertex \(w_R\in C_R\) of whose indegree in \(R\) is minimal. Since the minima are
at most the averages, inequality \((2)\) implies that \(2(3/8 - 2d)k<d(u_R)\leq\vert A_R\vert+\vert B_R\vert+\vert C_R\vert\),
\((3/8 - 2d)k<d^+(v_R)\leq\vert A_R\vert/2+\vert B_R\vert\) and \((3/8 - 2d)k<d^-(w_R)\leq\vert B_R\vert+\vert C_R\vert/2\). We also
have the inequality \(\vert B_R\vert-\vert D_R\vert=\vert N_R^+(S)\vert-\vert S\vert<\mu k\). Thus we may define positive reals \(r_A\),
\(r_B\), \(r_C\), \(r_D\) by
\begin{itemize}[itemsep=0pt, topsep=0.5pt,parsep=1pt]
    \item \(r_A:=\vert A_R\vert/2+\vert B_R\vert-(3/8 - 2d)k\);
    \item \(r_B:=(3/2)(\vert D_R\vert-\vert B_R\vert + \mu k)\);
    \item \(r_C:=\vert B_R\vert+\vert C_R\vert/2-(3/8 - 2d)k\);
    \item \(r_D:=\vert A_R\vert+\vert B_R\vert+\vert C_R\vert-2(3/8 - 2d)k\).
\end{itemize}
Then
\[
r_A + r_B+r_C + r_D=\frac{3}{2}(\vert A_R\vert+\vert B_R\vert+\vert C_R\vert+\vert D_R\vert)+3\mu k/2 - 4(3/8 - 2d)k\stackrel{(\ref{e2})}<2\mu k.
\]
This in turn implies that
\begin{itemize}[itemsep=0pt, topsep=0.5pt,parsep=1pt]
    \item \(\vert D_R\vert=k-(\vert A_R\vert+\vert B_R\vert+\vert C_R\vert)=k - 2(3/8 - 2d)k-r_D=k/4\pm 5\mu k\);
    \item \(\vert B_R\vert=\vert D_R\vert+2ck-\frac{2}{3}r_B=k/4\pm10\mu k\);
    \item \(\vert A_R\vert=2((3/8 - 2d)k-\vert B_R\vert+r_A)=k/4\pm30\mu k\) and
    \item \(\vert C_R\vert=2((3/8 - 2d)k-\vert B_R\vert+r_C)=k/4\pm30\mu k\).
\end{itemize}
Altogether, this gives \(\vert A\vert,\vert B\vert,\vert C\vert,\vert D\vert=(1/4\pm 31\mu )n\).
\end{proof}

\begin{claim}\label{C.2} The following statements hold.
\begin{itemize}[itemsep=0pt, topsep=0.5pt,parsep=1pt]
    \item \(e(A,B)>(1 - 10^3\mu)n^2/16\);\(e(B,C)>(1 - 10^3\mu)n^2/16\);\(e(C,D)>(1 - 10^3\mu)n^2/16\);\\ \(e(D,A)>(1 - 10^3\mu)n^2/16\);
    \item \(e(A)>(1/2 - 500\mu)n^2/16\); \(e(C)>(1/2 - 500\mu)n^2/16\);
    \item \(e(B,D)>(1/2 - 500\mu)n^2/16\);\(e(D,B)>(1/2 - 500\mu)n^2/16\).
\end{itemize}
\end{claim}
\begin{proof}
Since \(e(A,C),e(A,D)<3dn^2\) by \((6)\) we have
\[
\sum_{x\in A}d^+(x)\leq\vert A\vert^2/2+\vert A\vert\vert B\vert+6dn^2\stackrel{(\ref{e2})}\leq (3/2 + 400\mu)n^2/16.
\]
On the other hand, \(\sum_{x\in A}d^+(x)\geq\vert A\vert(3n - 4)/8\geq (3/2 - 200\mu)n^2/16\). Therefore we must
have \(e(A)>(1/2 - 10^3\mu)n^2/16\) and \(e(A,B)>(1 - 10^3\mu)n^2/16\). Also, since \(e(A,C),e(D,C)<
3dn^2\) we have
\[
(3/2 - 200\mu)n^2/16<\sum_{x\in C}d^-(x)<\vert B\vert\vert C\vert+\vert C\vert^2/2+6dn^2=(3/2 + 400\mu)n^2/16,
\]
so \(e(C)>(1/2 - 10^3\mu)n^2/16\) and \(e(B,C)>(1 - 10^3\mu)n^2/16\). Next, writing \(\overline{D}:=A\cup B\cup C\)
and using the inequalities \(e(D),e(D,C),e(A,D)<3dn^2\) gives
\begin{equation*}
\begin{aligned}
(3 - 500\mu)n^2/16&<e(D,\overline{D})+e(\overline{D},D)+2e(D)\\
&<e(D,A)+e(D,B)+e(B,D)+e(C,D)+12dn^2\\
&\leq\vert D\vert(\vert A\vert+\vert B\vert+\vert C\vert)+12dn^2=(3 + 800\mu)n^2/16,
\end{aligned}
\end{equation*}
so \(e(D,A)>(1 - 10^3\mu)n^2/16\) and \(e(C,D)>(1 - 10^3\mu)n^2/16\). Finally, since \(e(D,C),e(D)<
3dn^2\) we have
\[
(3/2 - 300\mu)n^2/16<\sum_{x\in D}d^+(x)<\vert A\vert\vert D\vert+e(D,B)+6dn^2<(1 + 400\mu)n^2/16+e(D,B)
\]
and so \(e(D,B)>(1/2 - 500\mu)n^2/16\). Since \(e(A,D),e(D)<3dn^2\) we have
\[
(3/2 - 300\mu)n^2/16<\sum_{x\in D}d^-(x)<e(B,D)+\vert C\vert\vert D\vert+6dn^2<(1 + 400\mu)n^2/16+e(B,D)
\]
and so \(e(B,D)>(1/2 - 500\mu)n^2/16\).
\end{proof}

Henceforth we will use only Claims \ref{C.1} and \ref{C.2} and make no further use of the information in \((\ref{A2})\). This has the advantage of making our picture invariant under the relabelling \(A\leftrightarrow C\),
\(B\leftrightarrow D\). 
Then we show that the number of non $(1 - 9000\sqrt{\mu })$-circular vertices is at most $\sqrt{\mu } n$, and we can arrange every vertex of $G$ to be $1/200$-acceptable. To calculate the number of non $(1 - 9000\sqrt{\mu })$-circular vertices, we assume that there are at least $x|P(i)|$ vertices in $P(i)$ with less than $(1 - 9000\sqrt{\mu })|P(i-1)|$ in-neighbours in $P(i-1)$. Thus the number of arcs from $P(i-1)$ to $P(i)$ is
$$e(P(i-1), P(i))\leq (1 - x)|P(i)||P(i-1)| + (1 - 9000\sqrt{\mu })\cdot x |P(i)||P(i-1)|. $$
Together with $e(P(i-1), P(i)) > (1 - 10^3\mu)n^2 / 16$, for $i\in [4]$ (mod 4), the value of $x$ is at most $\sqrt{\mu}/9$. Similarly, there are at most $\sqrt{\mu}|P(i)|/9$ vertices in $P(i)$ with less than $(1 - 9000\sqrt{\mu })|P(i+1)|$ out-neighbours in $P(i+1)$. In summary, $D$ has at most $\sqrt{\mu}n$ vertices that are not $(1 - 9000\sqrt{\mu })$-circular.

\begin{claim}
By reassigning vertices that are not {$(1 - 9000\sqrt{\mu })$-cyclic} to \((A,B,C, D)\) we can arrange that every vertex of \(G\) is {$1/200$-acceptable}.
\end{claim}

\begin{proof}
To satisfy the first statement of the claim, for any vertex $x\in G$ we let $P_{x}^{+}:=\{1\leq i\leq4:$
$|N^{+}(x)\cap P(i)| > n/400\}$, $P_{x}^{-}:=\{1\leq i\leq4:|N^{-}(x)\cap P(i)| > n/400\}$, and $P_{x}:=P_{x}^{+}\cup P_{x}^{-}$.
By the minimum semi-degree condition $|P_{x}^{+}|\geq2$, $|P_{x}^{-}|\geq2$ and $|P_{x}|\geq3$. If there is some $i$
such that $i + 1\in P_{x}^{+}$ and $i - 1\in P_{x}^{-}$ (where we use addition and subtraction mod 4) then
we can put $x$ into $P(i)$ and it will have property $P(i):P(i - 1)_{>1/200}P(i + 1)^{>1/200}$, that is, $x$ will become {$1/200$-acceptable}. Otherwise we must have either $P_{x}^{+}=\{1,3\}$ and $P_{x}^{-}=\{2,4\}$, or
$P_{x}^{-}=\{1,3\}$ and $P_{x}^{+}=\{2,4\}$. In either case we can put $x$ into $A = P(1)$: in the first case it
will have property $A:A^{>1/200}D_{>1/200}$ and in the second case property $A:A_{>1/200}B^{>1/200}$.
Therefore in both cases $x$ will become {$1/200$-acceptable}. As before, by increasing the coefficient of the $\sqrt{\mu}$-notation if necessary, we can ensure that the properties of all other vertices are
maintained.
\end{proof}

 That is, $G$ is an extremal case with parameters $(31$, $10^3$, $1/200$, $1-10^3\sqrt{\mu })$ by the definition, a contradiction. Thereby, $R$ is a robust $(\mu, 1/3)$-outexpander.
\end{proof}